\documentclass{amsart}

\newcommand{\A}{\mathcal{A}}

\usepackage{amsmath}
\usepackage{amsthm}
\usepackage{amsfonts}
\usepackage{amssymb}
\usepackage{mathrsfs}
\usepackage{amscd}
\usepackage{url}

\newcommand{\re}{\mathbb{R}}
\newcommand{\cpx}{\mathbb{C}}

\newcommand{\N}{\mathbb{N}}

\renewcommand{\P}{\mathbb{P}}

\newcommand{\half}{\frac{1}{2}}
\newcommand{\lmd}{\lambda}

\newcommand{\eps}{\epsilon}

\newcommand{\dt}{\delta}

\def\af{\alpha}
\def\bt{\beta}

\def\rank{\mbox{rank}}

\newcommand{\sig}{\sigma}
\newcommand{\Sig}{\Sigma}

\newcommand{\reff}[1]{(\ref{#1})}

\newcommand{\mc}[1]{\mathcal{#1}}

\newcommand{\supp}[1]{\mbox{supp}(#1)}

\newcommand{\bdes}{\begin{description}}
\newcommand{\edes}{\end{description}}

\newcommand{\bal}{\begin{align}}
\newcommand{\eal}{\end{align}}

\newcommand{\bnum}{\begin{enumerate}}
\newcommand{\enum}{\end{enumerate}}

\newcommand{\bit}{\begin{itemize}}
\newcommand{\eit}{\end{itemize}}

\newcommand{\bea}{\begin{eqnarray}}
\newcommand{\eea}{\end{eqnarray}}
\newcommand{\be}{\begin{equation}}
\newcommand{\ee}{\end{equation}}

\newcommand{\baray}{\begin{array}}
\newcommand{\earay}{\end{array}}

\newcommand{\bsry}{\begin{subarray}}
\newcommand{\esry}{\end{subarray}}

\newcommand{\bca}{\begin{cases}}
\newcommand{\eca}{\end{cases}}

\newcommand{\bcen}{\begin{center}}
\newcommand{\ecen}{\end{center}}

\newcommand{\bbm}{\begin{bmatrix}}
\newcommand{\ebm}{\end{bmatrix}}

\newcommand{\bmx}{\begin{matrix}}
\newcommand{\emx}{\end{matrix}}

\newcommand{\bpm}{\begin{pmatrix}}
\newcommand{\epm}{\end{pmatrix}}

\newcommand{\btab}{\begin{tabular}}
\newcommand{\etab}{\end{tabular}}

\newtheorem{theorem}{Theorem}[section]
\newtheorem{pro}[theorem]{Proposition}
\newtheorem{prop}[theorem]{Proposition}
\newtheorem{lem}[theorem]{Lemma}

\newtheorem{cor}[theorem]{Corollary}

\newtheorem{ass}[theorem]{Assumption}

\theoremstyle{definition}

\newtheorem{exm}[theorem]{Example}
\newtheorem{alg}[theorem]{Algorithm}

\newtheorem{remark}[theorem]{Remark}


\setcounter{equation}{0}
\setcounter{subsection}{0}

\begin{document}

\title[The $\mathcal{A}$-truncated $K$-moment problem]
{The $\mathcal{A}$-truncated $K$-moment problem}

\author[Jiawang Nie]{Jiawang Nie}
\address{Department of Mathematics, University of California San Diego,
9500 Gilman Drive, La Jolla, CA 92093, USA.}

\email{njw@math.ucsd.edu}

\begin{abstract}
Let  $\A \subseteq \N^n$ be a finite set,
and $K\subseteq \re^n$ be a compact semialgebraic set.
An {\it $\A$-truncated multisequence} ($\A$-tms) is a vector
$y=(y_{\af})$ indexed by elements in $\A$.
The $\A$-truncated $K$-moment problem ($\A$-TKMP) concerns whether or not
a given $\A$-tms $y$ admits a $K$-measure $\mu$,
i.e., $\mu$ is a nonnegative Borel measure supported in $K$
such that $y_\af = \int_K x^\af \mathtt{d}\mu$ for all $\af \in \A$.
This paper proposes a numerical algorithm for solving $\A$-TKMPs.
It aims at finding a flat extension of $y$
by solving a hierarchy of semidefinite relaxations
$\{(\mathtt{SDR})_k\}_{k=1}^\infty$ for a moment optimization problem,
whose objective $R$ is generated in a certain randomized way.
If $y$ admits no $K$-measures and $\re[x]_{\A}$ is $K$-full
(there exists $p = \sum_{\af\in \A} p_\af x^\af$ that is positive on $K$),
then $(\mathtt{SDR})_k$ is infeasible for all $k$ big enough,
which gives a certificate for the nonexistence of representing measures.
If $y$ admits a $K$-measure, then for almost all generated $R$,
this algorithm has the following properties:
i) we can asymptotically get a flat extension of $y$
by solving the hierarchy $\{(\mathtt{SDR})_k\}_{k=1}^\infty$;
ii) under a general condition that is almost sufficient and necessary,
we can get a flat extension of $y$ by solving $(\mathtt{SDR})_k$ for some $k$;
iii) the obtained flat extensions admit a $r$-atomic $K$-measure with $r\leq |\A|$.
The decomposition problems for completely positive matrices
and sums of even powers of real linear forms,
and the standard truncated $K$-moment problems,
are special cases of $\A$-TKMPs.
They can be solved numerically by this algorithm.
\end{abstract}

\keywords{$\A$-truncated multisequence,
$\A$-truncated $K$-moment problem,
completely positive matrices, flat extension, moment matrix,
localizing matrix, representing measure,
semidefinite program, sums of even powers}

\subjclass{44A60, 47A57, 90C22, 90C90}

\maketitle

\section{Introduction}

Let $\A \subseteq \N^n$ be a finite set ($\N$ is the set of nonnegative integers).
An {\it $\A$-truncated multisequence} ($\A$-tms) is a vector $y:=(y_{\af})_{\af \in \A}$
in $\re^{\A}$ (the space of real vectors indexed by elements in $\A$).
For $\af :=(\af_1, \ldots,\af_n) \in \N^n$, denote $|\af|:=\af_1+\cdots+\af_n$.
The degree of $\A$ is
$
\deg(\A):=\max\{ |\af|: \, \af \in \A \}.
$
Let $K$ be the semialgebraic set
\be  \label{def:K}
K := \left\{x\in \re^n:\, h(x)  = 0,  g(x) \geq 0 \right\}
\ee
defined by two tuples of polynomials
$h:=(h_1,\ldots, h_{m_1})$ and $g:=(g_1,\ldots, g_{m_2})$.
A nonnegative Borel measure $\mu$ on $\re^n$ is called a {\it $K$-measure}
if its support, denoted by $\supp{\mu}$, is contained in $K$.
For $x :=(x_1, \ldots,x_n) \in \re^n$ and
$\af :=(\af_1, \ldots,\af_n) \in \N^n$,
denote $x^\af := x_1^{\af_1}\cdots x_n^{\af_n}$.
The integral $\int_K x^\af \mathtt{d}\mu$, if it exists,
is called the $\af$-th moment of a $K$-measure $\mu$.
An $\A$-tms $y$ is said to admit the measure $\mu$ if
for all $\af \in \A$, the moment
$\int_K x^\af \mathtt{d}\mu$ exists and is equal to $y_\af$.
%
%
Such $\mu$ is called a {\it $K$-representing} measure for $y$.
Let $meas(y,K)$ denote the set of all $K$-measures admitted by $y$. Denote
\[
\mathscr{R}_{\A}(K) := \{ y \in \re^{\A}: \, meas(y,K) \ne \emptyset \}.
\]
The $\A$-truncated $K$-moment problem ($\A$-TKMP) concerns whether
a given $\A$-tms $y$ admits a $K$-measure or not.
If it does not, can we get a certificate for that?
If it does, how can we obtain a $K$-representing measure?
Preferably, we are often interested in finitely atomic measures.
(A measure is {\it finitely atomic} if its support is a finite set,
and is {\it $r$-atomic} if its support consists of at most $r$ distinct points.)
This paper presents a numerical algorithm for solving $\A$-TKMPs.
The $\A$-truncated moment problems appear frequently in applications. For instance,
in sparse polynomial optimization, the variables in its semidefinite relaxations
(cf.~Lasserre~\cite{LasSpar}) are $\A$-tms'
whose $\A$ depends on the sparsity patterns.
For such $\A$, the truncated moment problem was studied in
Laurent and Mourrain\cite{LauMou09}.

\subsection{Two special cases}

Many hard computational problems can be formulated as
$\A$-TKMPs with appropriate $\A$ and $K$.
Here we list two of them.

The first one is the decomposition problem for {\it completely positive}
matrices (cf.~\cite{BerSM03}).
A symmetric $n\times n$ matrix $C$ is called
{\it completely positive} if there exist vectors
$u_1,\ldots, u_r \in \re_+^n$
(the nonnegative orthant of $\re^n$) such that
\[
C = u_1u_1^T+\cdots+u_ru_r^T.
\]
The above is called a {\it CP-decomposition} of $C$, if it exists.
How can we determine whether a matrix $C$ is completely positive or not?
If it is not, can we get a certificate for that?
If it is, how can we get a CP-decomposition for $C$?
As we will show in Section~\ref{sec:appl},
this problem can be formulated as an $\A$-TKMP with
\[
\A =\{ \af\in \N^n:\, |\af|=2 \}, \quad
K=\{ x\in \re_+^n:\, x_1+\cdots+x_n = 1 \}.
\]

The second one is the decomposition problem for
{\it sums of even powers} (SOEP) of real linear forms (cf.~\cite{Rez92}).
(A form is a homogeneous polynomial.)
A form $f$ of an even degree $m$ is SOEP if
there exist real linear forms $L_1,\ldots, L_r$ such that
\[
f = L_1^m+\cdots+L_r^m.
\]
The above is called an {\it SOEP-decomposition} of $f$, if it exists.
How can we determine whether a form $f$ is SOEP or not?
If it is not, can we get a certificate for that?
If it is, how can we get an SOEP-decomposition for $f$?
As we will show in Section~\ref{sec:appl},
this problem can also be formulated as an $\A$-TKMP with
\[
\A = \{ \af\in \N^n:\, |\af|=m \}, \quad
K = \{ x \in \re^n:\, x^Tx = 1, x_1+\cdots+x_n \geq 0 \}.
\]

It is typically quite difficult to detect the existence of
CP/SOEP-decompositions, or to compute them when they exist.
In the prior existing work, there are no much efficient numerical methods
for solving such decomposition problems (except some special cases),
in the author's best knowledge. In this paper, we show that they
can be solved numerically as special cases of $\A$-TKMPs.

\subsection{Standard truncated $K$-moment problems}

Denote $\N_d^n:=\{\af \in \N^n: |\af|\leq d\}$.
When $\A=\N_d^n$, the $\A$-TKMP is specialized to
the standard truncated $K$-moment problem (TKMP),
and the $\A$-tms is called a tms of degree $d$.
Curto and Fialkow originally studied TKMPs
and have made foundational work in the field.
We refer to \cite{CF96,CF05,CF09} and the references therein.
Here we give a short review for TKMP.

Every tms $z \in \re^{\N_d^n}$ defines a {\it Riesz functional $\mathscr{L}_z$}
acting on $\re[x]_d$
(the space of real polynomials in $x:=(x_1,\ldots,x_n)$ of degrees at most $d$) as
\be  \label{def:Riesz}
\mathscr{L}_z \Big( \sum_{ \af \in \N_d^n} p_\af x^\af \Big)
:= \sum_{ \af \in \N_d^n} p_\af z_\af.
\ee
For convenience, denote $\langle p, z \rangle := \mathscr{L}_z(p)$.
We say that $\mathscr{L}_z$ is {\it $K$-positive} if
\[
\mathscr{L}_{z}(p) \ge 0 \quad \forall \,
p\in \mathbb{R}[x]_{d}: \, p|_K \ge 0.
\]
The $K$-positivity of $\mathscr{L}_z$ is necessary
for $z$ to admit a $K$-measure.
When $K$ is compact, it is also sufficient, which can be implied from the proof of
Tchakaloff's Theorem \cite{Tch}.
However, it is typically very difficult to check whether
a Riesz functional is $K$-positive or not.

A more favorable condition than $K$-positivity is {\it flatness}.
For convenience of description, suppose $K=\re^n$.
Denote $X\succeq 0$ (resp., $X\succ 0$) if the matrix $X$
is symmetric positive semidefinite (resp., definite).
For a tms $z \in \re^{\N_{2k}^n}$, define $M_k(z)$ to be
the symmetric matrix, which is linear in $z$, such that
\[
\mathscr{L}_z(p^2) = p^T  M_k(z)   p \qquad
\forall \, p\in \re[x]_k.
\]
(For convenience, we also use $p$ to denote the vector of coefficients of $p(x)$
in the graded lexicographical ordering.)
The matrix $M_k(z)$ is called a $k$-th order {\it moment matrix}.
If $z$ admits a $K$-measure $\mu$, then
\[
p^T  M_k(z)  p = \mathscr{L}_z(p^2) =
\int_K p^2 \mathtt{d}\mu \geq 0
\quad \forall \, p\in \re[x]_k.
\]
This implies that
\be \label{MOM>=0}
M_k(z) \succeq  0.
\ee
Hence, \reff{MOM>=0} is necessary for $z$ to admit a measure on $\re^n$,
but typically not sufficient.
However, if \reff{MOM>=0} is satisfied and $z$ is flat, i.e.,
\be \label{df:flat}
\rank \, M_{k-1}(z) = \rank \, M_k(z),
\ee
then $z$ admits a unique measure,
which is $r$-atomic with $r= \rank\,M_k(z)$.
When $K$ is a semialgebraic set as in \reff{def:K},
there is a similar version of this result (cf. Theorem~\ref{thm-CF:fec}).
This is an important result of Curto and Fialkow (cf. \cite{CF05}).
For convenience of notion, when $K=\re^n$, we simply say $z$ is {\it flat}
if it satisfies \reff{df:flat} and \reff{MOM>=0}.
(When $K$ is as in \reff{def:K}, we say $z$ is flat
if it satisfies \reff{loc-M>=0} and \reff{cond:FEC}.)

Flatness is very useful for solving truncated moment problems.
Let $y\in \re^{\N_d^n}, z\in \re^{\N_{e}^n}$ be two tms'.
We say $y$ is a {\it truncation} of $z$,
or equivalently, $z$ is an {\it extension} of $y$,
if $d \leq e$ and $y_\af = z_\af$ for all $\af \in \N_d^n$.
We denote by $z|_d$ the subvector of $z$,
whose entries are indexed by $\af \in \N_d^n$.
So, $z$ is an extension of $y$ if and only if $y=z|_d$.
If $z$ is flat and extends $y$,
we say $z$ is a {\it flat extension} of $y$.
Similarly, if $w$ is an extension of $z$ and $z$ is flat,
then we say $z$ is a {\it flat truncation} of $w$.
Clearly, if $y$ has a flat extension $w$,
then $y$ and $w$ commonly admit a finitely atomic measure
(cf. Theorem~\ref{thm-CF:fec}). Indeed,
Curto and Fialkow \cite{CF05} further proved that:
a tms $y \in \re^{\N_{d}^n}$ admits a $K$-measure if and only if
it has a flat extension $w \in\re^{\N_{2k}^n}$ (cf.~Theorem~\ref{CF:flat-extn}).
However, little is known on how to get flat extensions.

When $K$ is compact as in \reff{def:K},
Helton and Nie \cite{HN04} proposed a semidefinite approach
for solving TKMPs. Its basic idea is to find flat extensions
through semidefinite relaxations with moment and localizing matrices.
They proved that:
if $y \in \re^{\N_{d}^n}$ admits no $K$-measures,
then a certificate can be obtained for the nonexistence; if
there exists $\mu \in meas(y,K)$ such that
$\supp{\mu} \subseteq \mc{Z}(p):=\{x\in \re^n \mid p(x)=0\}$,
where $p$ is a polynomial having certain weighted SOS representations
and $\mc{Z}(p)$ is finite, then a flat extension of $y$ can be found.
However, for more general cases, no much was known on how
to get flat extensions.
%
%

\subsection{Contributions}

This paper studies the new and broader class of moment problems: $\A$-TKMPs.
The CP/SOEP-decomposition problems and standard
truncated $K$-moment problems are special cases of $\A$-TKMPs.

For an $\A$-tms $y$, a tms $w \in \re^{N_{2k}^n}$ is an extension of $y$
(or $y$ is a truncation of $w$) if
$w_\af = y_\af$ for all $\af \in \A$.
Denote by $w|_{\A}$ the subvector of $w$
whose indices belong to $\A$. Clearly, if $w|_{\A}=y$ and $w$ is flat,
then $w$ and $y$ commonly admit a finitely atomic measure.
When does $y$ admit a $K$-measure?
If it does, how can we get such a measure?
If it does not, can we get a certificate for the nonexistence?
They are the main questions in $\A$-TKMPs.

This paper proposes a numerical algorithm (i.e., Algorithm~\ref{sdpalg:A-tkmp})
for solving $\A$-TKMPs. It is based on solving a hierarchy of semidefinite relaxations
(we denote by $\{(\mathtt{SDR})_k\}_{k=1}^\infty$ here for convenience),
for a moment optimization problem whose objective
is a Riesz functional $\langle R, w\rangle:=\mathscr{L}_w(R)$.
The objective $R$ is generated in a certain randomized way.
Assume $K$ is given as in \reff{def:K} and is compact.
Denote $\re[x]_{\A} := \mbox{span}\{x^\af:\, \af \in \A\}$.
If $y$ admits no $K$-measures and $\re[x]_{\A}$ is $K$-full
(i.e., there exists $p \in \re[x]_{\A}$
that is positive on $K$), then $(\mathtt{SDR})_k$ will be infeasible for all $k$ big enough.
This gives a certificate for the nonexistence of a $K$-representing measure for $y$.
If $y$ admits a $K$-measure, then, for {\it almost all} generated $R$,
this algorithm has the following properties:

\bit

\item [i)] We can asymptotically get a flat extension of $y$
by solving the hierarchy $\{(\mathtt{SDR})_k\}_{k=1}^\infty$.
So, the convergence is guaranteed with probability one.

\item [ii)]  We can get a flat extension of $y$
by solving $(\mathtt{SDR})_k$ for some $k$,
under a general condition that is almost sufficient and necessary.
This implies that the finite convergence is very likely to happen.

\item [iii)] The obtained flat extensions admit a $r$-atomic $K$-measure
with $r\leq |\A|$.

\eit
\noindent
In all our numerical experiments, the finite convergence was always observed,
and we got $r$-atomic $K$-representing measures
with $r\leq |\A|$ when they exist.

CP/SOEP-decomposition problems are special cases of $\A$-TKMPs.
So, this algorithm can be applied to solve them.
If such decompositions do not exist, then the resulting $(\mathtt{SDR})_k$
will be infeasible for all big $k$,
which gives a certificate for the nonexistence;
if they exist, we can get such decompositions,
either asymptotically or in finitely many steps (very likely to happen).
This algorithm can also solve the standard TKMPs.
In the author's best knowledge, this is the first numerical algorithm
that has the aforementioned properties.

The paper is organized as follows.
Section~\ref{sec:back} reviews some backgrounds;
Section~\ref{sec:Atkmp-pro} presents some properties of $\A$-TKMPs;
Section~\ref{sec:sdalg} describes the algorithm;
Section~\ref{sec:cvganly} proves its convergence properties;
Section~\ref{sec:appl} gives applications in
CP/SOEP-decomposition problems and the standard TKMPs.

\section{Backgrounds}  \label{sec:back}
\setcounter{equation}{0}

\noindent
{\bf Notation} \,
The symbol $\N$ (resp., $\re$) denotes the set of nonnegative integers (resp., real numbers).
For $t\in \re$, $\lceil t\rceil$ (resp., $\lfloor t\rfloor$)
denotes the smallest integer not smaller
(resp., the largest integer not greater) than $t$.
For $x\in \re^n$, denote by
$
[x]_{\A} = (x^\af)_{ \af \in \A }
$
the vector of monomials, whose exponents are from $\A$,
ordered in the graded lexicographical ordering.
Denote $\N_d^n:=\{\af\in \N^n: |\af|\leq d\}$.
When $\A=\N_d^n$, the vector $[x]_{\A}$ is denoted by $[x]_d$.
Let $[k]:=\{1,\ldots,k\}$.
%
%
The symbol $\re[x] := \re[x_1,\ldots,x_n]$
denotes the ring of polynomials in $x:=(x_1,\ldots,x_n)$ with real coefficients.
%
%

For a set $S \subseteq \re^n$, $|S|$ denotes its cardinality,
and $int(S)$ denotes its interior.
The symbol $\mc{P}_d(S)$ denotes the set of
polynomials in $\re[x]_d$ that are nonnegative on $S$.
The superscript $^T$ denotes the transpose of a matrix.
For $u\in \re^N$, denote $\| u \|_2 := \sqrt{u^Tu}$
and $B(u,r) :=\{x\in \re^n:\, \|x-u\|_2 \leq r\}$.
For a polynomial $p\in \re[x]$, $\|p\|_2$ denotes the $2$-norm
of the coefficient vector of $p$.
Denote by $\mathbb{S}^k=\{x\in \re^{k+1}:\, \|x\|_2 =1 \}$
the $k$-dimensional unit sphere.
Denote by $\mc{S}^N$ the space of $N\times N$ real symmetric matrices.
For a matrix $A$, $\| A \|_2$ denotes its standard operator $2$-norm,
and $\|A\|_F$ denotes the standard Frobenius norm of $A$.
%

\subsection{Standard truncated $K$-moment problems}

Let $z \in \re^{ N_d^n }$ and $K\subseteq \re^n$.
Bayer and Teichmann \cite{BT} proved that:
$z$ admits a $K$-measure $\mu$ if and only if
it admits a $r$-atomic $K$-measure $\nu$ with $r\leq \binom{n+d}{d}$.
A nice exposition for this result can be found in Laurent \cite[Theorem~5.8]{Lau}.
When $K$ is compact, we can characterize the
existence of representing measures via Riesz functionals.
This can be implied by the proof of Tchakaloff's Theorem \cite{Tch}.

\begin{theorem}[Tchakaloff] \label{thm:Tchak}
Let $K$ be a compact set in $\re^n$. A tms $z \in \re^{ \N_d^n }$
admits a $K$-measure if and only if its
Riesz functional $\mathscr{L}_z$ is $K$-positive.
\end{theorem}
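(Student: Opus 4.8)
\noindent\textit{Proof proposal.} The ``only if'' direction is immediate: if $\mu$ is a $K$-measure admitted by $z$, then for every $p\in\re[x]_d$ with $p|_K\ge 0$ one has $\mathscr{L}_z(p)=\int_K p\,\mathtt{d}\mu\ge 0$, so $\mathscr{L}_z$ is $K$-positive. The real content is the converse, and the plan is to upgrade $\mathscr{L}_z$ to a positive functional on $C(K)$ and then represent that functional by a measure.

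So assume $\mathscr{L}_z$ is $K$-positive. First I would observe that $\mathscr{L}_z$ descends to a well-defined linear functional $\ell$ on the subspace $V:=\{\,p|_K:\ p\in\re[x]_d\,\}\subseteq C(K)$: if $p|_K=q|_K$ then both $(p-q)|_K\ge 0$ and $(q-p)|_K\ge 0$, so $K$-positivity forces $\mathscr{L}_z(p)=\mathscr{L}_z(q)$; moreover $\ell$ is positive on $V$, i.e.\ $\ell(f)\ge 0$ whenever $f\in V$ and $f\ge 0$ on $K$. Since $(0,\ldots,0)\in\N_d^n$, the constant function $1$ lies in $V$, and compactness of $K$ gives $-\|f\|_\infty\cdot 1\le f\le\|f\|_\infty\cdot 1$ for every $f\in C(K)$; hence $V$ is cofinal in $C(K)$ for the cone of nonnegative functions. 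The M.~Riesz extension theorem then yields a positive linear functional $\tilde\ell$ on $C(K)$ extending $\ell$, and the Riesz representation theorem (where compactness of $K$ is used again) produces a nonnegative regular Borel measure $\mu$ on $K$ with $\tilde\ell(f)=\int_K f\,\mathtt{d}\mu$. Reading this off on monomials,
\[
z_\af=\mathscr{L}_z(x^\af)=\ell(x^\af|_K)=\tilde\ell(x^\af|_K)=\int_K x^\af\,\mathtt{d}\mu\qquad(\af\in\N_d^n),
\]
so $\mu$ is a $K$-representing measure for $z$ and $z\in\mathscr{R}_{\N_d^n}(K)$.

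An equivalent route, which I would mention for context, is convex duality: the truncated moment cone $\mc{M}:=\{(\int_K x^\af\,\mathtt{d}\mu)_{\af\in\N_d^n}:\ \mu\ \text{a }K\text{-measure}\}$ has dual cone exactly $\mc{P}_d(K)$ (one inclusion via the Dirac measures $\delta_x$, $x\in K$), so biduality of closed convex cones gives $\mc{M}=\{z:\ \mathscr{L}_z\ \text{is }K\text{-positive}\}$, which is the claim. In this formulation the one genuinely nontrivial point is that $\mc{M}$ is \emph{closed}: because $(0,\ldots,0)\in\N_d^n$, the moments-of-probability-measures slice is the continuous image of the weak-$*$ compact set of probability measures on $K$ (Banach--Alaoglu), hence compact and bounded away from $0$, so the cone it generates is closed. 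Either way, the place where the proof needs real work---and the reason the statement fails for unbounded $K$---is precisely this appeal to compactness (through the order-unit / Riesz-representation step in the first argument, through weak-$*$ compactness in the second); the rest is formal manipulation.
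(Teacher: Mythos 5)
Your proof is correct. Note first that the paper offers no proof of this statement at all: it records it as Theorem~\ref{thm:Tchak} and says only that it ``can be implied by the proof of Tchakaloff's Theorem \cite{Tch}'', so there is no in-paper argument to match against. Your first route (descend $\mathscr{L}_z$ to the subspace $V=\{p|_K\}$ of $C(K)$ using $K$-positivity to get well-definedness, observe that $1\in V$ is an order unit so $V$ is cofinal, apply the M.~Riesz extension theorem and then Riesz--Markov) is sound; the one point worth checking, that positivity plus the order unit gives boundedness of $\tilde\ell$ so that the representation theorem applies, is standard and implicit in your write-up. Your second route is essentially the statement the paper itself invokes later in Section~3.3, namely that $\mathcal{P}_d(K)$ is the dual cone of $\mathscr{R}_{\N_d^n}(K)$, and your justification of closedness of the moment cone (compactness of the probability-measure slice via Banach--Alaoglu, plus the fact that this slice is bounded away from the origin because the $\af=0$ coordinate equals $1$) is the right one. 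The only substantive difference from the classical Tchakaloff/Bayer--Teichmann argument is that the latter produces a \emph{finitely atomic} representing measure directly (via a Carath\'eodory-type argument on the moment cone), whereas your extension argument yields only a general Borel measure; since the theorem as stated asks only for existence of a $K$-measure, and the paper obtains the finitely atomic refinement separately from \cite{BT} and Proposition~\ref{pro:atms:extn}(i), this costs you nothing here.
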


When $K$ is noncompact, the above might not be true.
A stronger condition is that
$\mathscr{L}_y$ is {\it strictly $K$-positive}, i.e.,
\[
\mathscr{L}_y(p) > 0 \quad \forall \,
p \in \re[x]_d: p|_K \geq 0, \, p|_K \not\equiv 0.
\]
When $K$ is a determining set (i.e., $p\equiv 0$ whenever $p|_K\equiv 0$),
if $\mathscr{L}_z$ is strictly $K$-positive, then $z$ admits a $K$-measure
(cf. \cite[Theorem~1.3]{FiNi1}).
Typically, checking $K$-positivity or strict $K$-positivity is quite difficult.

A more useful condition than $K$-positivity
is the positive semidefiniteness of localizing matrices.
For $z\in \re^{ \N_{2k}^n }$ and $h\in \re[x]_{2k}$,
define $L_{q}^{(k)}(z)$ to be the symmetric matrix,
which is linear in $z$, such that
\be \label{df:Loc-Mat}
\mathscr{L}_z(qp^2) \, = \, p^T
\left( L_{q}^{(k)}(z) \right)   p
\quad \forall \, p \in \re[x]_{ k - \lceil \deg(q)/2 \rceil }.
\ee
The matrix $L_{q}^{(k)}(z)$ is called the  $k$-th
{\it localizing matrix} of $q$ generated by $z$.
When $q=1$,  $L_{q}^{(k)}(z)$ coincides with the moment matrix $M_k(z)$.

Let $K$ be as in \reff{def:K} and $g_0 :=1$.
If $z \in \re^{ \N_{2k}^n }$ admits a $K$-measure $\mu$, then
\be  \label{loc-M>=0}
L_{h_i}^{(k)}(z) = 0 \, ( i =1,\ldots, m_1), \quad
L_{g_j}^{(k)}(z) \succeq 0  \, ( j =0,1,\ldots, m_2).
\ee
This is because for all such $i,j$
\[
p^T L_{h_i}^{(k)}(z)p  = \mathscr{L}_z(h_ip^2)
= \int_K h_ip^2 \mathtt{d}\mu = 0,
\]
\[
p^T L_{g_j}^{(k)}(z)p  = \mathscr{L}_z(g_jp^2)
= \int_K g_jp^2 \mathtt{d}\mu \geq 0,
\]
for all $p \in \re[x]$ with $\deg(h_ip^2),\deg(g_jp^2)\leq 2k$.
Therefore, \reff{loc-M>=0} is necessary for $z$
to admit a $K$-measure. Typically, it is not sufficient.  Let
\be \label{df:dg}
d_K  := \max_{i \in [m_1],  j \in [m_2]}
\{1, \lceil \deg(h_i)/2 \rceil, \lceil \deg(g_j)/2 \rceil\} .
\ee
If, in addition to \reff{loc-M>=0}, $z$ satisfies the rank condition
\be \label{cond:FEC}
\rank \, M_{k-d_K}(z) = \rank \, M_k(z),
\ee
then $z$ admits a unique $K$-measure, which is finitely atomic.
This is an important result of Curto and Fialkow.
For convenience of notion, we simply say $z$ is flat
if $z$ satisfies \reff{loc-M>=0} and \reff{cond:FEC}.

\begin{theorem}[\cite{CF05}] \label{thm-CF:fec}
Let $K$ be as in \reff{def:K}. If $z \in \re^{ \N_{2k}^n }$ is flat (i.e.,
\reff{loc-M>=0} and \reff{cond:FEC} hold),
then $z$ admits a unique $K$-measure,
which is $\rank\,M_k(z)$-atomic.
\end{theorem}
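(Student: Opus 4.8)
The plan is to let the flatness condition \reff{cond:FEC} do the heavy lifting through the Curto--Fialkow flat extension of moment matrices, then extract an $r$-atomic measure from a finite variety, and only at the end use the localizing conditions \reff{loc-M>=0} to force the atoms into $K$. Set $r := \rank M_k(z)$. \emph{Step 1 (flat extension of $M_k(z)$).} Since \reff{cond:FEC} in particular gives $\rank M_{k-1}(z) = r$, one may choose $r$ monomials of degree at most $k-1$ whose columns form a basis $B$ of the range of $M_k(z)$; then every column of $M_k(z)$ indexed by a monomial of degree $\leq k$ is a unique real combination of the columns in $B$. Using the Hankel identity $\mathscr{L}_z(x^\af x^\bt) = \mathscr{L}_z(x^{\af'}x^{\bt'})$ whenever $\af+\bt = \af'+\bt'$, one checks that these column relations are compatible with ``multiplication by $x_i$'' and that the resulting operators on $\mathrm{span}(B)$ commute. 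The flat extension theorem of Curto and Fialkow then asserts that the flat $M_k(z)\succeq 0$ has a unique flat positive semidefinite moment extension of order $k+1$; iterating gives a unique infinite psd moment matrix $M_\infty$, with associated tms $w$, satisfying $\rank M_\infty = r$ and $w_\gm = z_\gm$ for all $|\gm|\leq 2k$, hence $\mathscr{L}_w = \mathscr{L}_z$ on $\re[x]_{2k}$. I expect this to be the main obstacle: verifying that the extended Hankel entries are independent of how the exponents are split and that positive semidefiniteness is preserved is the genuinely delicate combinatorial core of the argument, and I would invoke \cite{CF05} rather than reprove it.

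\emph{Step 2 (extracting the atoms).} Let $I := \{\, p\in\re[x] : M_\infty\, p = 0 \,\}$, identifying a polynomial with its coefficient vector; equivalently $p\in I$ iff $\mathscr{L}_w(qp)=0$ for all $q\in\re[x]$. Then $I$ is an ideal, $\re[x]/I$ has dimension $r$ (from $\rank M_\infty = r$), and $I$ is real: if $\sum_i p_i^2\in I$ then $0 = \mathscr{L}_w\big(\sum_i p_i^2\big) = \sum_i p_i^T M_\infty p_i$ with each summand $\geq 0$, forcing each $p_i\in I$. Hence $I$ is a zero-dimensional real radical ideal, so $\mc{V}_\cpx(I)=\mc{V}_\re(I)$ consists of exactly $r$ points $v_1,\dots,v_r\in\re^n$. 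A Lagrange interpolation argument (take $\chi_\ell\in\re[x]$ with $\chi_\ell(v_{\ell'})=\delta_{\ell\ell'}$ and set $\lambda_\ell := \mathscr{L}_w(\chi_\ell)=\mathscr{L}_w(\chi_\ell^2)>0$, the positivity because $I$ is radical and $\chi_\ell$ does not vanish at $v_\ell$) yields
\[
M_\infty \;=\; \sum_{\ell=1}^{r}\lambda_\ell\,[v_\ell]\,[v_\ell]^T ,
\]
where $[v_\ell]$ denotes the infinite vector of monomials evaluated at $v_\ell$. Consequently $\mu := \sum_{\ell=1}^r \lambda_\ell\,\delta_{v_\ell}$ is an $r$-atomic measure with $\int x^\af\,\mathtt{d}\mu = w_\af = z_\af$ for all $|\af|\leq 2k$, i.e. $\mu$ represents $z$.

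\emph{Step 3 (the atoms lie in $K$, and uniqueness).} Because $\rank M_{k-d_K}(z) = r$ (again by \reff{cond:FEC}), the vectors $[v_\ell]_{k-d_K}$ are linearly independent, so there exist polynomials $\phi_\ell$ of degree at most $k-d_K$ with $\phi_\ell(v_{\ell'})=\delta_{\ell\ell'}$. For each $i$ we have $\deg(h_i\phi_\ell^2)\leq 2k$, so $L_{h_i}^{(k)}(z)=0$ gives $\lambda_\ell h_i(v_\ell) = \mathscr{L}_z(h_i\phi_\ell^2) = 0$, hence $h_i(v_\ell)=0$ since $\lambda_\ell>0$; likewise $L_{g_j}^{(k)}(z)\succeq 0$ gives $\lambda_\ell g_j(v_\ell) = \mathscr{L}_z(g_j\phi_\ell^2)\geq 0$, hence $g_j(v_\ell)\geq 0$. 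Thus every $v_\ell\in K$, and $\mu$ is an $r$-atomic $K$-measure with $r=\rank M_k(z)$. This is exactly where the degree gap built into $d_K$ via \reff{df:dg} is needed: it guarantees that the separating polynomials $\phi_\ell$ have low enough degree to be admissible inside the localizing-matrix identities. For uniqueness, a short dimension count shows $\langle\ker M_k(z)\rangle = I$, so any $K$-measure $\nu$ representing $z$ satisfies $\supp{\nu}\subseteq \mc{V}_\re(I)\cap K=\{v_1,\dots,v_r\}$ (indeed $\int p^2\,\mathtt{d}\nu = \mathscr{L}_z(p^2)=0$ for every $p\in\ker M_k(z)$, forcing $p$ to vanish on $\supp{\nu}$); writing $\nu=\sum_\ell c_\ell\delta_{v_\ell}$ and matching moments up to order $2k$ against those of $\mu$, the linear independence of the $[v_\ell]_k$ forces $c_\ell=\lambda_\ell$, so $\nu=\mu$. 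This establishes existence, uniqueness, and the atom count. \eproof
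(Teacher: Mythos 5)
The paper offers no proof of this statement: Theorem~\ref{thm-CF:fec} is imported verbatim from Curto and Fialkow \cite{CF05} as a black box, so there is no internal argument to compare yours against. Your reconstruction is the standard proof from the flat-extension literature (cf.\ \cite{CF96,CF05} and Laurent \cite{Lau05,Lau}) and is essentially sound: flatness together with $M_k(z)\succeq 0$ (which you should note comes from the $g_0=1$ block of \reff{loc-M>=0}) yields, via the Curto--Fialkow flat extension theorem, a unique rank-$r$ infinite positive semidefinite moment extension; its kernel is a real radical zero-dimensional ideal of codimension $r$, whence exactly $r$ real atoms with positive weights; and the degree budget built into $d_K$ via \reff{df:dg} is precisely what lets the interpolating polynomials $\phi_\ell$ of degree at most $k-d_K$ be fed into the localizing forms $L_{h_i}^{(k)}(z)$ and $L_{g_j}^{(k)}(z)$ to force the atoms into $K$. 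Two points would deserve one more sentence each in a full write-up: (a) the flat extension theorem is the genuine core, and invoking \cite{CF05} rather than reproving it matches the level of rigor of the paper itself; (b) the identity $\langle \ker M_k(z)\rangle = I$ used for uniqueness is not a ``short dimension count'' on its own but the recursive-generation property of flat moment matrices (column relations of $M_k(z)$ propagate to the extension), again part of \cite{CF96}; once granted, your support-localization and weight-matching argument correctly gives uniqueness among all representing measures, not merely among $K$-measures.
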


Flatness is very useful for solving truncated moment problems,
as shown by Curto and Fialkow \cite{CF96,CF982}.
A nice exposition for this can also be found in Laurent \cite{Lau05}.
For a flat tms, its finitely atomic representing measure can be found
by solving some eigenvalue problems,
as shown by Henrion and Lasserre~\cite{HenLas05}.

Clearly, if a tms $y$ is not flat but has a flat extension $z$,
then $y$ admits a $K$-measure.
Thus, the existence of a $K$-representing measure for $y$
can be determined by investigating whether
$y$ has a flat extension or not. Indeed, Curto and Fialkow \cite{CF05}
proved the following result (the set $K$ does not need to be compact).

\begin{theorem}[\cite{CF05}]\label{CF:flat-extn}
Let $K$ be as in \reff{def:K}.
Then a tms $y \in \re^{ \N_d^n }$ admits a $K$-measure if and only if
it has a flat extension $z \in \re^{ \N_{2k}^n }$.
\end{theorem}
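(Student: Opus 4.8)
The plan is to prove both directions, with the substantive content lying in the ``only if'' part. For the easy direction, suppose $y \in \re^{\N_d^n}$ has a flat extension $z \in \re^{\N_{2k}^n}$. By the defining rank condition \reff{cond:FEC} together with the localizing/moment positivity \reff{loc-M>=0}, Theorem~\ref{thm-CF:fec} applies directly: $z$ admits a (unique, finitely atomic) $K$-measure $\mu$ with $\rank\, M_k(z)$ atoms. Since $y$ is a truncation of $z$, i.e. $y_\af = z_\af$ for all $\af \in \N_d^n$, the moments $\int_K x^\af \, \mathtt{d}\mu = z_\af = y_\af$ for those $\af$, so $\mu \in meas(y,K)$. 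This needs no work beyond invoking the stated Curto--Fialkow result.

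For the ``only if'' direction, assume $y$ admits a $K$-measure $\mu$. First I would reduce to a finitely atomic measure: apply the Bayer--Teichmann result (stated just before Theorem~\ref{thm:Tchak}) to replace $\mu$ by an $r$-atomic $K$-measure $\nu = \sum_{i=1}^r \lambda_i \delta_{u_i}$ with each $u_i \in K$, $\lambda_i > 0$, $r \le \binom{n+d}{d}$, and the same degree-$d$ moments as $y$. Now the idea is to take moments of $\nu$ up to a high enough order and argue that the resulting full tms is flat. Concretely, choose $k$ large enough that $k - d_K$ exceeds the ``stabilization degree'' of the finite point set $\{u_1,\dots,u_r\}$ — that is, large enough that the evaluation vectors $[u_1]_{k-d_K}, \dots, [u_r]_{k-d_K}$ are already linearly independent (equivalently, $\{u_1,\dots,u_r\}$ is separated by polynomials of degree $k-d_K$, which holds once $k-d_K$ is at least the degree needed for interpolation on $r$ generic-position points; one must also accommodate the semialgebraic constraints $h_i, g_j$ in the choice of $k$, which is why the threshold is phrased via $d_K$). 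Define $z \in \re^{\N_{2k}^n}$ by $z_\af := \int_K x^\af \, \mathtt{d}\nu = \sum_i \lambda_i u_i^\af$. Then $z$ extends $y$, and $M_k(z) = \sum_i \lambda_i [u_i]_k [u_i]_k^T$, $L_{g_j}^{(k)}(z) = \sum_i \lambda_i g_j(u_i) [u_i]_{k-\lceil\deg g_j/2\rceil} [u_i]_{\cdot}^T \succeq 0$ since $g_j(u_i)\ge 0$, and $L_{h_i}^{(k)}(z) = 0$ since $h_i(u_i)=0$; thus \reff{loc-M>=0} holds. Moreover $\rank\, M_k(z) = \rank\, M_{k-d_K}(z) = r$ once $k - d_K$ is past the stabilization degree, giving \reff{cond:FEC}. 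Hence $z$ is a flat extension of $y$.

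The main obstacle is making precise the ``stabilization degree'' step — i.e. showing that for a \emph{finite} set of points in $K$ there is a single order $2k$ at which the moment matrix rank equals the number of points \emph{and} all localizing-matrix positivity conditions hold simultaneously. This is really a statement about finite-dimensional evaluation: the column space of $M_k(z)$ is spanned by $\{[u_i]_k\}$, and these vectors become linearly independent once the coordinate functions separate the $u_i$, which is guaranteed for degree at least $r-1$ (in one variable) or, in general, once the Vandermonde-type evaluation map on $\{u_1,\dots,u_r\}$ is injective — always achievable at finite degree. One must also check the bookkeeping tying $d_K$ in \reff{df:dg} to the localizing matrices so that \reff{cond:FEC} with gap $d_K$ is the right rank-stabilization condition; this is routine but is where the constants live. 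I would not belabor the explicit bound on $k$, since existence of a finite such $k$ is all the theorem asserts.
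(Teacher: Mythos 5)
Your proof is correct, and it follows essentially the same route the paper itself takes for the analogous $\A$-TKMP statement, Proposition~\ref{pro:atms:extn}(ii): reduce to a finitely atomic representing measure, form its moments up to order $2k$, and observe that the resulting tms is flat once $k$ is large enough for the evaluation vectors $[u_i]_{k-d_K}$ to be linearly independent (the paper states Theorem~\ref{CF:flat-extn} itself as a citation to \cite{CF05} without proof). Your handling of the easy direction via Theorem~\ref{thm-CF:fec} and of the rank-stabilization and localizing-matrix checks is sound.
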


There are other necessary conditions for admitting $K$-representing measures,
like the recursively generated relation.
We refer to \cite{CF91,CF96,CF982}.

\subsection{Ideals, varieties and positive polynomials}

A subset $I \subseteq \re[x]$ is called an {\it ideal} if
$I + I \subseteq I$ and $p\cdot q \in I$ for all $p\in I$ and $q \in \re[x]$.
For a tuple $p:=(p_1,\ldots,p_m)$ of polynomials in $\re[x]$,
denote by $I(p)$ the ideal generated by $p_1,\ldots, p_m$,
which is the set
$p_1 \re[x] + \cdots + p_m \re[x]$.
A variety is a subset of $\cpx^n$ that consists of
common zeros of a set of polynomials.
A real variety is the intersection of a variety with $\re^n$.
For the tuple $p=(p_1,\ldots,p_m)$, denote
\[
V_{\cpx}(p) := \{v\in \cpx^n \mid \, p(v) = 0 \}, \quad
V_{\re}(p)  := \{v\in \re^n \mid \, p(v) = 0 \}.
\]

A polynomial $f\in \re[x]$ is a {\it sum of squares} (SOS)
if there exist $f_1,\ldots,f_k \in \re[x]$ such that
$f=f_1^2+\cdots+f_k^2$.
The set of all SOS polynomials in $n$ variables and of degree $d$
is denoted by $\Sig_{n,d}$.
It is a convex cone in $\re^{\N_d^n}$
and has nonempty interior for any even $d>0$.
We refer to Reznick~\cite{Rez00} for SOS polynomials.

Let $K,h,g$ be as in \reff{def:K}. Denote $g_0 :=1$ and
\be \label{t-ideal:h}
I_{2k}(h) :=
\left\{
\left.  \sum_{i=1}^{m_1} h_i \phi_i \right|
 \mbox{ each }  \deg(h_i\phi_i) \leq 2k
\right\},
\ee
\be \label{qmod-g}
Q_k(g) :=
\left\{
\left. \sum_{i=0}^{m_2} g_i \sig_i  \right|
\baray{c}
\mbox{ each } \deg(\sig_ig_i) \leq 2k \\
\mbox{ and } \sig_i \mbox{ is SOS}
\earay
\right\}.
\ee
Clearly, $I(h) = \cup_{ k\in \N} \, I_{2k}(h)$.
The set $I_{2k}(h)$ is called a truncation of $I(h)$ with degree $2k$.
The union $Q(g):= \cup_{ k\in \N} \, Q_k(g)$
is called the {\it quadratic module} generated by $g$.
Each $Q_k(g)$ is a truncation of $Q(g)$ with degree $2k$.

Clearly, if $f\in I(h) +  Q(g)$, then $f$ is nonnegative on $K$.
For compact $K$ as in \reff{def:K},
if a polynomial $f$ is positive on $K$, then
$f$ is not necessarily in $f\in I(h) +  Q(g)$,
but we have $f\in I(h) + Pre(g)$
($Pre(g)$ is the preordering generated by $g$).
This was proved by Schm\"{u}dgen \cite{Smg}.
However, if, in addition, the archimedean condition holds for $(h,g)$
(i.e., $N-\|x\|_2^2 \in I(h) + Q(g)$ for some $N$),
then we have $f\in I(h) + Q(g)$.
This was shown by Putinar \cite{Put}.

\begin{theorem}[Putinar, \cite{Put}]\label{thm:PutThm}
Let $K$ be as in \reff{def:K}. Suppose the archimedean condition holds for $(h,g)$.
If $f\in \re[x]$ is positive on $K$, then $f\in I(h) + Q(g)$.
\end{theorem}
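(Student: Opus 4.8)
The plan is to prove this via the operator-theoretic argument of Putinar: reduce to the case of no equality constraints, separate $f$ from the quadratic module by a linear functional, turn that functional into a $K$-representing measure by the multivariate spectral theorem, and derive a contradiction. First I would reduce to $m_1=0$. Replacing each equation $h_i=0$ by the two inequalities $h_i\geq 0$ and $-h_i\geq 0$ gives a tuple $\td g$ defining the same set $K$; since every polynomial $\phi$ is a difference of two squares, $\phi=\big(\tfrac{\phi+1}{2}\big)^2-\big(\tfrac{\phi-1}{2}\big)^2$, one has $Q(\td g)\subseteq I(h)+Q(g)$, and the archimedean condition for $(h,g)$ passes to $Q(\td g)$. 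So it suffices to show: if $Q:=Q(g)$ is archimedean and $f>0$ on $K=\{g\geq 0\}$, then $f\in Q$; assume for contradiction that $f\notin Q$.

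Next I would record that $1$ is an algebraic interior point of the convex cone $Q$. One checks that the archimedean set $\{p\in\re[x]:\, N-p^2\in Q \text{ for some } N\}$ is a subring of $\re[x]$ — it is closed under sums and products because $a^2\cdot Q\subseteq Q$ and, modulo $Q$, $(a\pm b)^2\leq 2a^2+2b^2$ and $(ab)^2\leq Nb^2$ — and it contains $\re$ and each $x_i$ (from $N-\|x\|_2^2\in Q$); hence it is all of $\re[x]$. From the identity $\tfrac{N+1}{2}\pm p=\tfrac12(N-p^2)+\tfrac12(1\pm p)^2$ we then get $\lambda\pm p\in Q$ for a suitable $\lambda>0$, and writing $1\pm\eps p=\eps(\lambda\pm p)+(1-\eps\lambda)$ shows $1\pm\eps p\in Q$ for small $\eps>0$. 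Since $f\notin Q$ and $Q$ is convex with $1$ in its algebraic interior, an Eidelheit/Hahn--Banach separation produces a nonzero linear functional $L:\re[x]\to\re$ with $L\geq 0$ on $Q$ and $L(f)\leq 0$; the order-unit property forces $L(1)>0$, and we normalize $L(1)=1$.

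Then I would carry out the GNS construction. The bilinear form $(p,q)\mapsto L(pq)$ is positive semidefinite because $p^2\in Q$; its radical $N_0=\{p:\,L(p^2)=0\}$ is an ideal (for $p\in N_0$ and $q$ arbitrary, pick $\lambda$ with $\lambda-q^2\in Q$ and use $(\lambda-q^2)p^2\in Q$ to get $L((qp)^2)=0$), so $L(pq)$ descends to an inner product on $A:=\re[x]/N_0$; let $H$ be the completion and $\mb{1}\in H$ the class of $1$. Multiplication by $x_i$ gives symmetric operators $M_i$ on $A$, and the archimedean bound $\sum_i\|M_i\bar p\|^2=L(\|x\|_2^2\, p^2)\leq N\,L(p^2)=N\|\bar p\|^2$ makes each $M_i$ extend to a bounded self-adjoint operator on $H$; the $M_i$ pairwise commute. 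By the spectral theorem for a commuting family of bounded self-adjoint operators there is a projection-valued measure $E$ on a compact subset of $\re^n$ with $M_i=\int x_i\,\mathtt{d}E$; setting $\mu(\cdot):=\langle E(\cdot)\mb{1},\mb{1}\rangle$ gives a positive Borel measure with compact support, $\mu(\re^n)=\|\mb{1}\|^2=L(1)=1$, and $\int p\,\mathtt{d}\mu=\langle\bar p,\mb{1}\rangle=L(p)$ for all $p\in\re[x]$.

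Finally, $\supp{\mu}\subseteq K$: for each $j$ and each $p$, $\int g_jp^2\,\mathtt{d}\mu=L(g_jp^2)\geq 0$ since $g_jp^2\in Q$; as $\mu$ has compact support, polynomials are dense in $L^2(\mu)$, so approximating $\mathbf 1_{\{g_j<0\}}$ yields $\int_{\{g_j<0\}}g_j\,\mathtt{d}\mu\geq 0$, which forces $\mu(\{g_j<0\})=0$. Hence $\supp{\mu}\subseteq\bigcap_j\{g_j\geq 0\}=K$, so $\int f\,\mathtt{d}\mu>0$ because $f>0$ on $K$ and $\mu\neq 0$, contradicting $L(f)=\int f\,\mathtt{d}\mu\leq 0$. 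Therefore $f\in Q(g)\subseteq I(h)+Q(g)$. I expect the main obstacle to be exactly the passage from the abstract functional $L$ to an honest representing measure: the boundedness of the $M_i$ (which is precisely where archimedeanness is indispensable), the multivariate spectral theorem, and the check that the measure is supported in $K$ rather than merely on the joint spectrum. The reduction of equalities, the order-unit computation, and the separation step are routine.
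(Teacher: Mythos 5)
The paper does not prove this statement: it is quoted verbatim as Putinar's Positivstellensatz with a citation to \cite{Put}, so there is no internal proof to compare against. Your argument is a correct reconstruction of Putinar's original functional-analytic proof, and all the essential steps are in place: the reduction of equalities to pairs of inequalities, the verification that archimedeanness makes $1$ an algebraic interior point (order unit) of the quadratic module, the Eidelheit separation of $f$ from $Q$, the GNS construction with the archimedean bound giving bounded commuting self-adjoint multiplication operators, the joint spectral measure producing a compactly supported representing measure for $L$, and the $L^2$-density argument locating $\supp{\mu}$ inside $K$. One small slip in the write-up: the difference-of-two-squares identity $\phi=\big(\tfrac{\phi+1}{2}\big)^2-\big(\tfrac{\phi-1}{2}\big)^2$ is what gives the containment $I(h)\subseteq Q(\tilde g)$ (needed so that the archimedean element $N-\|x\|_2^2\in I(h)+Q(g)$ lands in $Q(\tilde g)$, and so that proving $f\in Q(\tilde g)$ suffices via $Q(\tilde g)=I(h)+Q(g)$), whereas the containment $Q(\tilde g)\subseteq I(h)+Q(g)$ that you attribute to it is immediate from $\tau^+h_i+\tau^-(-h_i)=(\tau^+-\tau^-)h_i$. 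It is also worth knowing that the references the paper points to for this theorem (Marshall's book \cite{MarBk}, and the Jacobi/Kadison--Dubois circle of ideas) give a purely algebraic proof avoiding the spectral theorem; your operator-theoretic route is shorter to state but imports more machinery, while the algebraic route is self-contained within real algebra. Either way, your proposal establishes the theorem as stated.
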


In the literature,
Theorem~\ref{thm:PutThm} is called {\it Putinar's Positivstellentsatz}
(cf. Lasserre's book\cite[Theorem~2.14]{LasBok}, Laurent's survey\cite[p.186]{Lau}),
or {\it Representation Theorem}
(cf. Marshall's book~\cite[Theorem~5.4.4]{MarBk}).
Various versions of this result were also found by
Dubois, Jacobi, Kadison, Krivine, Stone, etc
(cf.~\cite[p.79]{MarBk}).
When $K$ is compact, the archimedean condition can be
satisfied by adding a redundant condition,
like $N-\|x\|_2^2 \geq 0$ for $N$ big enough.

\subsection{Semidefinite optimization with moment variables}

Semidefinite programming (SDP) is very useful in solving moment problems.
We refer to \cite{Todd} for SDP,
and refer to \cite{LasBok,Lau,HN04} for semidefinite programs arising
from moment problems.

Let $\mc{S}_+^N$ be the cone of positive semidefinite matrices in $\mc{S}^N$.
Let $\mc{K} := \mc{S}^{N_1}_+ \times \cdots \times \mc{S}^{N_\ell}_+$
be a cone in the space $\mc{S}^{N_1} \times \cdots \times \mc{S}^{N_\ell}.$
A tuple $X=(X_1,\ldots,X_\ell)$ belongs to $\mc{K}$
if and only if each $X_i\in \mc{S}^{N_i}_+$.
A general semidefinite program is
\be \label{Ksdp:prm}
\min \quad  Trace(CX) \quad
s.t. \quad \mc{F}(X) = f, \,  X  \in \mc{K}
\ee
with $C$ a constant tuple in
$\mc{S}^{N_1} \times \cdots \times \mc{S}^{N_\ell}$,
$\mc{F}$ a linear operator
from $\mc{S}^{N_1} \times \cdots \times \mc{S}^{N_\ell}$
to $\re^m$, and $f$ a vector in $\re^m$.
Let $\mc{F}^*$ be the adjoint operator of $\mc{F}$.
The dual optimization problem of \reff{Ksdp:prm} is
\be \label{Ksdp:dual}
\max \quad  f^Ty \quad
s.t. \quad \mc{F}^*(y) + Z =C, \, Z \in \mc{K}.
\ee

Let $K,h,g$ be as in \reff{def:K}. Denote
\be \label{df:Phi(g)}
\Phi_k(g) := \left\{
\left. w \in \re^{ \N_{2k}^n } \right|
L_{g_j}^{(k)}(w) \succeq 0, \,\,\,
j=0,1,\ldots, m_2 \right\},
\ee
\be \label{df:Ek(h)}
E_k(h) := \left\{
\left. w \in \re^{ \N_{2k}^n } \right|
L_{h_i}^{(k)}(w) = 0, \ \,\, \,
i=1,\ldots,m_1 \right\}.
\ee
It can be shown that $\Phi_k(g)$ is the dual cone of $Q_k(g)$
and  $E_k(h)$ is the dual cone of $I_{2k}(h)$ (cf. \cite{LasBok,Lau}).
Indeed, $E_k(h)$ is a subspace of $\re^{ \N_{2k}^n }$.

A typical semidefinite program in this paper is
\be  \label{mom:sdrlx}
\underset{ w }{\min} \quad c^Tw  \quad
s.t. \quad w|_{\A} = y, \,\, w \in \Phi_k(g) \cap E_k(h)
\ee
with given $c\in \re^{ \N_d^n }$ ($\deg(\A)<d\leq 2k$) and $y \in \re^{\A}$.
The dual optimization problem of \reff{mom:sdrlx} is
\be \label{sosrlx:mom}
\underset{ p \in \re[x]_{\A} }{\max} \quad \langle p, y \rangle \quad
s.t. \quad c-p \in  I_{2k}(h) + Q_k(g).
\ee
Any objective value of a feasible solution of  \reff{mom:sdrlx}
(resp., \reff{sosrlx:mom}) is an upper bound (resp., lower bound)
for the optimal value of the other one
(this is called {\it weak duality}).
If one of them has an interior point
(for \reff{sosrlx:mom} it means that there exists $p\in \re[x]_{\A}$
such that $c-p$ lies in the interior of
$\big(Q_k(g) + I_{2k}(h)\big) \cap \re[x]_d$,
and for \reff{mom:sdrlx} it means that there is a feasible
$w$ with $L_{g_j}^{(k)}(w) \succ 0$ for all $j$),
then the other one has an optimizer and they have the same optimal value
(this is called {\it strong duality}).
We refer to \cite[Section~2.4]{BTN} for duality theory.

\section{Properties of $\A$-TKMPs} \label{sec:Atkmp-pro}
\setcounter{equation}{0}

This section presents some properties of
$\A$-truncated $K$-moment problems.
Recall that $\A$ is a finite set in $\N^n$.

\subsection{$\A$-Riesz functionals}

An $\A$-tms $y$ defines an $\A$-Riesz functional
$\mathscr{L}_y$ that acts on $\re[x]_{\A}$ as
\[
\mathscr{L}_y \Big( \sum_{\af\in\A} p_\af x^\af \Big ) =
\sum_{\af\in\A} p_\af  y_\af.
\]
The $\A$-Riesz functional $\mathscr{L}_y$
is said to be {\it $K$-positive} if
\[
\mathscr{L}_y(p) \geq 0 \quad \forall \,
p \in  \re[x]_{\A}:\, p|_K \geq 0,
\]
and {\it strictly} $K$-positive if
\[
\mathscr{L}_y(p) > 0 \quad \forall \,
p \in  \re[x]_{\A}:\, p|_K \geq 0, \, p|_K \not\equiv 0.
\]
Clearly, if $y$ admits a $K$-measure $\mu$, then $\mathscr{L}_y$
must be $K$-positive because
\[
\mathscr{L}_y(p) = \int_K p \mathtt{d} \mu  \geq 0
\]
whenever $p \in  \re[x]_{\A}$ and $p|_K \geq 0$.
So, the $K$-positivity of $\mathscr{L}_y$ is a necessary condition
for $y$ to admit a $K$-measure.
Indeed, it is also a sufficient if $K$ is compact and $\re[x]_{\A}$ is {\it $K$-full}
(i.e., there exists $p\in \re[x]_{\A}$ such that
$p>0$ on $K$). This is a result of Fialkow and Nie \cite{FiNi2012}.

\begin{theorem}(\cite[Theorem~2.2]{FiNi2012}) \label{thm2.2:NF03}
Let $\A\subseteq \N^n$ be a finite set.
Suppose $K\subseteq \mathbb{R}^n$ is compact and $\re[x]_{\A}$ is $K$-full.
Let $y$ be an $\A$-tms such that $\mathscr{L}_y$ is $K$-positive.
Then, there exist $N \le \dim \re[x]_{\A}$, $u_{1},\ldots,u_{N} \in K$,
and $c_{1},\ldots,c_{N} > 0$, such that
\[
y =  c_1 [u_1]_{\A} + \cdots + c_N [u_N]_{\A}.
\]
\end{theorem}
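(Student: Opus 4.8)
The plan is to prove this as a sharp, finite-dimensional Tchakaloff theorem, by analysing the convex cone generated by the ``moment curve'' $\{[u]_{\A}:u\in K\}$. Identify $\re[x]_{\A}$ with $\re^{\A}$ by sending a polynomial to its coefficient vector (the monomials $x^\af$, $\af\in\A$, being linearly independent, this is an isomorphism, so in particular $\dim\re[x]_{\A}=|\A|$); under this identification the pairing $\langle p,v\rangle=\sum_{\af\in\A}p_\af v_\af$ is the standard inner product on $\re^{\A}$, and $p(u)=\langle p,[u]_{\A}\rangle$. Let
\[
C:=\mathrm{cone}\{[u]_{\A}:u\in K\}
=\Big\{\textstyle\sum_{i=1}^{r}c_i[u_i]_{\A}:\ r\in\N,\ c_i\ge 0,\ u_i\in K\Big\}\ \subseteq\ \re^{\A},
\]
which is a convex cone. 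The theorem reduces to the two claims (a) $C$ is closed and (b) $y\in C$; granting these, Carath\'eodory's theorem for cones expresses $y$ as a nonnegative combination of at most $\dim\re^{\A}=\dim\re[x]_{\A}$ of the generators $[u]_{\A}$, and after deleting those with zero coefficient (the case $y=0$ being trivial, with $N=0$) we obtain the asserted $y=c_1[u_1]_{\A}+\cdots+c_N[u_N]_{\A}$ with $N\le\dim\re[x]_{\A}$, all $c_i>0$, all $u_i\in K$.

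Claim (b) is immediate once (a) is known: if $y\notin C$, then since $C$ is a closed convex cone the separating hyperplane theorem yields $p\in\re^{\A}$ with $\langle p,v\rangle\ge 0$ for every $v\in C$ but $\langle p,y\rangle<0$; specializing $v=[u]_{\A}$ shows $p(u)\ge 0$ on $K$, i.e.\ $p\in\re[x]_{\A}$ with $p|_K\ge 0$, whence $\mathscr{L}_y(p)=\langle p,y\rangle<0$ contradicts the $K$-positivity of $\mathscr{L}_y$. The work is therefore concentrated in claim (a), which is the only place where compactness of $K$ and $K$-fullness of $\re[x]_{\A}$ are both used, and I expect it to be the main obstacle. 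The idea is to exhibit a compact convex ``base'' of $C$ not containing the origin. Pick $q\in\re[x]_{\A}$ positive on $K$; by compactness $q\ge\delta$ on $K$ for some $\delta>0$, so $\phi:K\to\re^{\A}$, $\phi(u):=q(u)^{-1}[u]_{\A}$, is continuous, $\phi(K)$ is compact, and $D:=\mathrm{conv}(\phi(K))$ is a compact convex set (convex hulls of compact sets are compact in finite dimensions). Since $\langle q,\phi(u)\rangle\equiv 1$ we get $\langle q,v\rangle=1$ for all $v\in D$, so $0\notin D$; and a direct computation shows $C=\{tv:t\ge 0,\ v\in D\}$.

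It remains to observe that $\{tv:t\ge 0,\ v\in D\}$ is closed when $D$ is compact with $0\notin D$: if $t_kv_k\to w$ with $t_k\ge 0$ and $v_k\in D$, then $\|v_k\|_2$ is bounded below by $\min_{v\in D}\|v\|_2>0$ and above by $\max_{v\in D}\|v\|_2$, so $t_k=\|t_kv_k\|_2/\|v_k\|_2$ stays bounded; passing to a convergent subsequence $(t_{k_j},v_{k_j})\to(t,v)\in[0,\infty)\times D$ gives $w=tv\in\{tv:t\ge 0,\ v\in D\}$. Hence $C$ is closed, which completes (a), and with (b) and the conical Carath\'eodory bound the proof is finished. (Alternatively, one could first extend $\mathscr{L}_y$ from $\re[x]_{\A}$ to a $K$-positive functional on $\re[x]_{\deg(\A)}$ via the M.~Riesz extension theorem, the $K$-full element $q$ supplying the required order unit, and then invoke Tchakaloff's Theorem~\ref{thm:Tchak}; but that route gives a weaker atom bound and still needs a Carath\'eodory-type step for the sharp count $N\le|\A|$.)
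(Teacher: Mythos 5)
Your argument is correct. Note that the paper itself gives no proof of this statement --- it is imported verbatim from \cite[Theorem~2.2]{FiNi2012} --- so there is nothing internal to compare against; but your route (closedness of the cone $\mathrm{cone}\{[u]_{\A}:u\in K\}$ via the compact base $D=\mathrm{conv}\{q(u)^{-1}[u]_{\A}:u\in K\}$ supplied by a $K$-full element $q$, then separation against $K$-positivity, then conical Carath\'eodory for the bound $N\le\dim\re[x]_{\A}$) is exactly the standard Tchakaloff-type argument that Fialkow and Nie use, with all the delicate points (why compactness and $K$-fullness are needed, why the cone is closed, why $y$ lies in the cone itself rather than merely its closure) handled correctly.
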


The above theorem immediately implies the following.

\begin{cor}  \label{cor:ctf-nomeas}
Suppose $K\subseteq \mathbb{R}^n$ is compact and $\re[x]_{\A}$ is $K$-full.
If an $\A$-tms $y$ admits no $K$-measures,
then there exists $p\in \re[x]_{\A}$ such that
\[
p|_K \geq 0, \quad  \mathscr{L}_y(p)<0.
\]
\end{cor}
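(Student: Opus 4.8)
The plan is to derive Corollary~\ref{cor:ctf-nomeas} as the contrapositive of Theorem~\ref{thm2.2:NF03}. Suppose, for contradiction, that no such $p$ exists; that is, every $p \in \re[x]_{\A}$ with $p|_K \geq 0$ satisfies $\mathscr{L}_y(p) \geq 0$. This is precisely the statement that the $\A$-Riesz functional $\mathscr{L}_y$ is $K$-positive. Since $K$ is compact and $\re[x]_{\A}$ is $K$-full by hypothesis, Theorem~\ref{thm2.2:NF03} applies and yields points $u_1,\ldots,u_N \in K$ and positive scalars $c_1,\ldots,c_N > 0$ with $y = c_1[u_1]_{\A} + \cdots + c_N[u_N]_{\A}$.

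The second step is to observe that such a representation of $y$ exhibits an explicit $K$-measure for $y$, namely the finitely atomic measure $\mu := \sum_{i=1}^N c_i \delta_{u_i}$, where $\delta_{u_i}$ is the Dirac measure at $u_i$. Since each $u_i \in K$, the support of $\mu$ is contained in $K$, so $\mu$ is a $K$-measure. For each $\af \in \A$ we have $\int_K x^\af \, \mathtt{d}\mu = \sum_{i=1}^N c_i u_i^\af = \sum_{i=1}^N c_i [u_i]_{\A,\af} = y_\af$, so $\mu$ represents $y$. Hence $y$ admits a $K$-measure, contradicting the hypothesis that $meas(y,K) = \emptyset$. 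Therefore the assumed $p$ must exist, which proves the corollary.

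There is essentially no obstacle here: the corollary is a direct logical consequence of Theorem~\ref{thm2.2:NF03}, and the only thing to check is the routine identification of a conic combination $\sum c_i [u_i]_{\A}$ with the moment vector of the atomic measure $\sum c_i \delta_{u_i}$. The one point requiring a moment's care is the translation between ``$\mathscr{L}_y$ is $K$-positive'' and ``no separating $p$ exists'': the negation of the conclusion states there is no $p \in \re[x]_{\A}$ with $p|_K \geq 0$ and $\mathscr{L}_y(p) < 0$, which says exactly that $\mathscr{L}_y(p) \geq 0$ for all $p \in \re[x]_{\A}$ nonnegative on $K$, i.e.\ $K$-positivity in the sense defined above. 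Once this is noted, the argument is immediate.
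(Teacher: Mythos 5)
Your proof is correct and follows exactly the route the paper intends: the paper states that Corollary~\ref{cor:ctf-nomeas} is an immediate consequence of Theorem~\ref{thm2.2:NF03}, and your contrapositive argument (negating the conclusion gives $K$-positivity of $\mathscr{L}_y$, then the theorem produces an atomic representation, which is a $K$-representing measure) is precisely that deduction spelled out.
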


There is a similar version of Theorem~\ref{CF:flat-extn} for $\A$-TKMPs.
Recall that a tms $w\in \re^{N_{2k}^n}$ is called flat
if it satisfies \reff{loc-M>=0} and \reff{cond:FEC}.

\begin{prop} \label{pro:atms:extn}
(i) Let $K\subseteq \mathbb{R}^n$  be a set.
Then, an $\A$-tms $y$ admits a $K$-measure if and only if
it admits a $r$-atomic $K$-measure with $r\leq |\A|$. \\
(ii) Let $K$ be as in \reff{def:K}.
Then, an $\A$-tms $y$ admits a $K$-measure
if and only if $y$ is extendable to a flat tms $w\in \re^{N_{2k}^n}$ for some $k$.
\end{prop}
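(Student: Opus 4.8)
The plan is to establish (i) first and then deduce (ii) from it together with the Curto--Fialkow flat extension theorem; note that (i) is stated for an arbitrary set $K$, while (ii) uses the semialgebraic form \reff{def:K}. For (i), the ``if'' direction is trivial, so assume $\mu$ is a $K$-measure for $y$, which forces each monomial $x^\af$ ($\af\in\A$) to be $\mu$-integrable. I would apply Tchakaloff's theorem to the finite family of continuous functions $\{x^\af:\af\in\A\}$, whose span $\re[x]_\A$ has dimension $|\A|$: this produces a finitely atomic measure $\nu=\sum_{i=1}^{r}c_i\delta_{u_i}$ with $c_i>0$, points $u_i\in\supp{\mu}\subseteq K$, and $r\le\dim\re[x]_\A=|\A|$, such that $\int x^\af\mathtt{d}\nu=\int x^\af\mathtt{d}\mu=y_\af$ for all $\af\in\A$. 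Hence $\nu$ is an $r$-atomic $K$-measure for $y$ with $r\le|\A|$. Equivalently, one shows that $y=\int_K [u]_\A\,\mathtt{d}\mu(u)$ lies in the conic hull of $\{[u]_\A:u\in K\}\subseteq\re^{|\A|}$ (and not merely in its closure), after which Carath\'eodory's theorem in $\re^{|\A|}$ trims the representation to at most $|\A|$ atoms; when $K$ is compact and $\re[x]_\A$ is $K$-full, (i) also follows from Theorem~\ref{thm2.2:NF03} applied to the ($K$-positive) functional $\mathscr{L}_y$, but the general case needs Tchakaloff.

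For (ii), the ``if'' direction is immediate from Theorem~\ref{thm-CF:fec}: a flat extension $w\in\re^{\N_{2k}^n}$ of $y$ admits a finitely atomic $K$-measure $\nu$, and since $w_\af=y_\af$ for all $\af\in\A$, the same $\nu$ represents $y$ on $\A$. For the ``only if'' direction, suppose $y$ admits a $K$-measure; by (i) it admits an $r$-atomic $K$-measure $\nu=\sum_{i=1}^{r}c_i\delta_{u_i}$ with distinct $u_i\in K$, $c_i>0$, and $r\le|\A|$. I would pick $k$ with $2k\ge\deg(\A)$ and $k-d_K\ge r-1$, and set $w:=\sum_{i=1}^{r}c_i[u_i]_{2k}\in\re^{\N_{2k}^n}$, i.e.\ the degree-$2k$ moment vector of $\nu$. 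Then $w|_\A=y$ since $\nu$ represents $y$ on $\A$; because $w$ admits the $K$-measure $\nu$, the computations preceding \reff{loc-M>=0} give $L_{h_i}^{(k)}(w)=0$ and $L_{g_j}^{(k)}(w)\succeq 0$, so \reff{loc-M>=0} holds. Finally $M_\ell(w)=M_\ell(\nu)=\sum_i c_i[u_i]_\ell[u_i]_\ell^T$ for all $\ell\le k$, and since $u_1,\dots,u_r$ are $r$ distinct points, the vectors $[u_1]_{r-1},\dots,[u_r]_{r-1}$ are linearly independent (a Lagrange-type basis built from $r-1$ affine forms separates the points), whence $\rank M_\ell(\nu)=r$ for every $\ell\ge r-1$; as $k-d_K\ge r-1$ this yields $\rank M_{k-d_K}(w)=r=\rank M_k(w)$, which is \reff{cond:FEC}. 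Therefore $w$ is a flat extension of $y$.

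The one genuinely substantive ingredient is Tchakaloff's theorem in step (i), specifically obtaining the sharp atom bound $|\A|$: extending $y$ to degree $\deg(\A)$ and invoking Bayer--Teichmann would only give $r\le\binom{n+\deg(\A)}{\deg(\A)}$, so one must apply Tchakaloff directly to the $|\A|$-dimensional space $\re[x]_\A$. Once (i) is available, part (ii) is routine bookkeeping with the moment and localizing matrices of a fixed atomic measure, using only the elementary stabilization of the rank of a moment matrix of an $r$-atomic measure. I expect the write-up to spend essentially all of its real effort citing or reproving the $\A$-version of Tchakaloff, and very little on the rest.
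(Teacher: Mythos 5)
Your proposal is correct and follows essentially the same route as the paper: part (i) via the Bayer--Teichmann/Tchakaloff argument (the paper cites a ``formal repetition'' of Laurent's Theorem~5.8) with Carath\'eodory in $\re^{|\A|}$ giving $r\le|\A|$, and part (ii) by taking $w=\sum_i c_i[u_i]_{2k}$ for $k$ large in one direction and invoking Theorem~\ref{thm-CF:fec} in the other. The only difference is that you spell out the rank-stabilization of $M_\ell(w)$ at $r$ for $\ell\ge r-1$, which the paper compresses into ``for $k$ big enough, $w$ is flat.''
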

\begin{proof}
(i) The ``if" direction is obvious.  The ``only if" direction
can be proved by a formal repetition of the proof of Theorem~5.8 in Laurent \cite{Lau}.
The inequality $r\leq |\A|$ is implied by Carath\'{e}odory's Theorem.
Here we omit it for cleanness of presentation.

(ii)
``$\Rightarrow$"\, Suppose $y$ admits a measure on $K$.
By item (i), it admits a $r$-atomic $K$-measure, say,
$
y = c_1 [u_1]_{\A} + \cdots + c_r [u_r]_{\A}
$
with all $u_i\in K$, $c_i>0$ and $r\leq |\A|$.
For $k$ big enough, the tms
$
w = c_1 [u_1]_{2k} + \cdots + c_r [u_r]_{2k} \in \re^{\N_{2k}^n}
$
is flat. Clearly, $w$ is an extension of $y$.

``$\Leftarrow$" Suppose $w\in \re^{\N_{2k}^n}$ is flat and $w|_{\A}=y$.
By Theorem~\ref{thm-CF:fec}, $w$ admits a $K$-measure $\mu$ and
$w_{\af} = \int_K x^\af \mathtt{d}\mu $ for all $\af \in \N_{2k}^n$. Since $w|_{\A}=y$,
we have $y_{\af} = \int_K x^\af \mathtt{d}\mu$ for all $\af \in \A$, i.e.,
$y$ admits the $K$-measure $\mu$.
\end{proof}

\subsection{Extremal extensions}

For $d>\deg(\A)$, define the $d$-th extension of $y$ as
\be \label{Ed(y,K)}
\mc{E}_d(y,K) =
\left\{ z \in \mathscr{R}_{\N_{d}^n}(K): \, z|_{\A} = y  \right\}.
\ee
Clearly, $\mc{E}_d(y,K)$ is convex.
If $meas(y,K)\ne\emptyset$, then $\mc{E}_d(y,K) \ne \emptyset$.

\begin{lem}  \label{lm:E-compact}
Let $y\in \re^{\A}$. If $K$ is compact and $\re[x]_{\A}$ is $K$-full,
then $\mc{E}_d(y,K)$ is a compact convex set.
\end{lem}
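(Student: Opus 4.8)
The plan is to establish the three defining features of the claim one at a time: convexity, boundedness, and closedness. I expect boundedness to be the only step with any real content, and it is precisely there that the hypothesis ``$\re[x]_{\A}$ is $K$-full'' gets used; the other two are essentially bookkeeping.

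Convexity is already noted in the text: if $z_1,z_2\in\mc{E}_d(y,K)$ are represented by $K$-measures $\mu_1,\mu_2$, then for $\lmd\in[0,1]$ the measure $\lmd\mu_1+(1-\lmd)\mu_2$ is a $K$-measure representing $\lmd z_1+(1-\lmd)z_2$, whose restriction to $\A$ is again $y$; so $\mc{E}_d(y,K)$ is convex.

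For boundedness I would exploit $K$-fullness. Choose $p=\sum_{\af\in\A}p_\af x^\af\in\re[x]_{\A}$ with $p>0$ on $K$; compactness of $K$ gives $\eps:=\min_{x\in K}p(x)>0$. For any $z\in\mc{E}_d(y,K)$ with representing measure $\mu$, since $p\in\re[x]_{\A}$ and $z|_{\A}=y$,
\[
\mathscr{L}_y(p)=\mathscr{L}_z(p)=\int_K p\,\mt{d}\mu\ \ge\ \eps\,\mu(K),
\]
so $\mu(K)\le \mathscr{L}_y(p)/\eps$, a bound depending only on $y$ and $p$. Setting $C:=\max_{\af\in\N_d^n}\ \max_{x\in K}|x^\af|<\infty$, we then obtain $|z_\af|=\bigl|\int_K x^\af\,\mt{d}\mu\bigr|\le C\,\mu(K)\le C\,\mathscr{L}_y(p)/\eps$ for every $\af\in\N_d^n$, so $\mc{E}_d(y,K)$ is bounded. (Without $K$-fullness this can fail: for $n=1$, $\A=\{1\}$ (the monomial $x$), $K=[-1,1]$, $y=0$, the total mass of a representing measure is unconstrained.)

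For closedness I would invoke Tchakaloff's Theorem (Theorem~\ref{thm:Tchak}): since $K$ is compact, $z\in\mathscr{R}_{\N_d^n}(K)$ if and only if $\langle q,z\rangle\ge 0$ for every $q\in\mc{P}_d(K)$, hence
\[
\mathscr{R}_{\N_d^n}(K)=\bigcap_{q\in\mc{P}_d(K)}\{\,z\in\re^{\N_d^n}:\langle q,z\rangle\ge 0\,\}
\]
is an intersection of closed half-spaces and therefore closed; intersecting with the closed affine subspace $\{z:z|_{\A}=y\}$ shows $\mc{E}_d(y,K)$ is closed. (Alternatively, one can take a convergent sequence in $\mc{E}_d(y,K)$, represent each term by a finitely atomic $K$-measure with at most $\binom{n+d}{d}$ atoms via Bayer--Teichmann, use the uniform mass bound established above to confine the weights and atoms to a compact set, and extract a subsequence of (weights, atoms) whose limit is a representing measure for the limiting tms.) Combining the three steps yields that $\mc{E}_d(y,K)$ is a compact convex set. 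The main obstacle, such as it is, is simply recognizing that $K$-fullness is exactly the ingredient forcing a uniform bound on the total mass of representing measures.
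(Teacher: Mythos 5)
Your proof is correct and follows essentially the same route as the paper's: closedness of $\mathscr{R}_{\N_d^n}(K)$ via Tchakaloff's duality, and boundedness by using a $K$-full $p$ with $p|_K\ge\eps>0$ to bound the total mass $z_0=\mu(K)$ and then bounding each $|z_\af|$ by a constant times that mass. The only cosmetic difference is that the paper phrases the mass bound through the $K$-positivity of $\mathscr{L}_z$ rather than directly through a representing measure.
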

\begin{proof}
The compactness of $K$ implies that $\mathscr{R}_{\N_{d}^n}(K)$ is closed,
which can be implied by Theorem~\ref{thm:Tchak}.
So, $\mc{E}_d(y,K)$ is also closed.
If $meas(y,K) =\emptyset$, then $\mc{E}_d(y,K) = \emptyset$ and we are done.
Thus, assume $meas(y,K)\ne\emptyset$, then $\mc{E}_d(y,K) \ne \emptyset$.
We need to prove that $\mc{E}_d(y,K)$ is bounded.
Since $\re[x]_{\A}$ is $K$-full and $K$ is compact,
there exist $p \in \re[x]_{\A}$ and $\eps$ such that
$p|_{K} \geq \eps >0.$ There exists $M>0$ such that for all $x\in K$
\[
-M \leq x^\af \leq M \quad \forall  \af \in \N_d^n.
\]
For all $z\in \mc{E}_d(y,K)$, the $K$-positivity of $\mathscr{L}_z$ implies that
\[
-M z_0  \leq z_\af \leq M z_0 \quad \forall  \af \in \N_d^n.
\]
Similarly, $p|_{K} \geq \eps$ implies
$\mathscr{L}_y(p) \geq \eps z_0$. So,
\[
|z_\af| \leq M \mathscr{L}_y(p)/\eps  \quad \forall  \af \in \N_d^n.
\]
Hence, $\mc{E}_d(y,K)$ is bounded, which completes the proof.
\end{proof}

\begin{lem} \label{lm:extrm=>fin}
Let $K\subseteq \re^n$, $d>\deg(\A)$ and $y \in \mathscr{R}_{\A}(K)$.
Suppose $z$ is an extreme point of $\mc{E}_d(y,K)$.
If $\mu \in meas(z,K)$, then $\mu$ is $r$-atomic with $r \leq | \A |$.
\end{lem}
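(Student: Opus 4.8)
The plan is to prove the statement directly in contrapositive spirit: assuming $z$ is an extreme point of $\mc{E}_d(y,K)$, I will show that \emph{every} $\mu\in meas(z,K)$ is supported on at most $|\A|$ points. First note $\mu$ is a finite measure (its mass $z_0$ is a finite coordinate of $z$) with all moments up to degree $d$ finite. Introduce the moment operator $\Psi\colon L^\infty(\mu)\to\re^{\N_d^n}$, $\Psi(f):=\big(\int x^\af f\,\mathtt{d}\mu\big)_{\af\in\N_d^n}$, put $W:=\Psi\big(L^\infty(\mu)\big)$ (a subspace of $\re^{\N_d^n}$ containing $z=\Psi(1)$), and let $\pi\colon\re^{\N_d^n}\to\re^{\A}$ be the coordinate projection onto the indices in $\A$. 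The whole argument hinges on the dichotomy: either $\pi$ is injective on $W$, or it is not.

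If $\pi|_W$ is \emph{not} injective, pick $\eta\in W\setminus\{0\}$ with $\pi\eta=0$, write $\eta=\Psi(f)$ with $f\in L^\infty(\mu)$, and rescale so that $\|f\|_\infty\le\tfrac12$. Then $\mu_\pm:=(1\pm f)\mu$ are nonnegative Borel measures supported in $\supp{\mu}\subseteq K$, hence $K$-measures; since $|x^\af f|$ is $\mu$-integrable for $\af\in\N_d^n$, the tms' $z\pm\eta$ lie in $\mathscr{R}_{\N_d^n}(K)$ (witnessed by $\mu_\pm$) and restrict to $y$ on $\A$ because $\pi\eta=0$, so $z\pm\eta\in\mc{E}_d(y,K)$. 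As $z+\eta\ne z-\eta$ and $z=\tfrac12\big((z+\eta)+(z-\eta)\big)$, this contradicts the extremality of $z$. (Note no compactness of $K$ is used, consistent with the hypotheses.) Hence $\pi|_W$ must be injective.

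The conclusion is then extracted from the injectivity of $\pi|_W$. Under the pairing of polynomials with coefficient vectors, the orthogonal complement of $W$ in $\re^{\N_d^n}$ is $\{q\in\re[x]_d:\ q\equiv 0\text{ on }\supp{\mu}\}$ and the orthogonal complement of $\pi(W)$ in $\re^{\A}$ is the analogous space for $\re[x]_{\A}$; hence $\dim W=\dim\big(\re[x]_d|_{\supp{\mu}}\big)$ and $\dim\pi(W)=\dim\big(\re[x]_{\A}|_{\supp{\mu}}\big)$, where $\re[x]_{e}|_{\supp{\mu}}$ denotes the space of restrictions to $\supp{\mu}$ of polynomials of degree $\le e$. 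Injectivity forces these equal, i.e. $\re[x]_{\A}|_{\supp{\mu}}=\re[x]_d|_{\supp{\mu}}$. Now I invoke $\deg(\A)<d$: since $\re[x]_{\A}\subseteq\re[x]_{d-1}\subseteq\re[x]_d$, the displayed equality sandwiches $\re[x]_{d-1}|_{\supp{\mu}}$ between two equal spaces, so $\re[x]_{d-1}|_{\supp{\mu}}=\re[x]_d|_{\supp{\mu}}$; a one-step degree induction (a monomial of degree $m+1$ is $x_i$ times one of degree $m$) upgrades this to $\re[x]|_{\supp{\mu}}=\re[x]_{d-1}|_{\supp{\mu}}$. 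Thus the coordinate ring $\re[x]/I(\supp{\mu})$ (with $I(\supp{\mu})$ the vanishing ideal of $\supp{\mu}$) is finite-dimensional over $\re$, so $V_{\cpx}(I(\supp{\mu}))$ — which contains $\supp{\mu}$ — is a finite set; since for a finite real set the dimension of the coordinate ring equals the number of points, $|\supp{\mu}|=\dim\big(\re[x]_{\A}|_{\supp{\mu}}\big)\le|\A|$. Therefore $\mu$ is $r$-atomic with $r=|\supp{\mu}|\le|\A|$.

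I expect the injective branch to be the real obstacle. The naive shortcut — take $|\A|+1$ atoms, note the vectors $[u_i]_{\A}$ are linearly dependent, and perturb along that dependence — fails exactly when the corresponding $[u_i]_d$ are also dependent (few, specially placed points), and it is the hypothesis $\deg(\A)<d$, feeding the finite-dimensionality of $\re[x]/I(\supp{\mu})$, that rules such configurations out. The remaining points are routine: $\mu$ being finite with finite degree-$d$ moments, a bounded density keeping $z\pm\eta$ well-defined and $\mu_\pm$ a $K$-measure, and the standard fact that a finite-dimensional coordinate ring corresponds to a finite (zero-dimensional) variety.
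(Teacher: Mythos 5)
Your proof is correct, and it reaches the same two pillars as the paper's argument---(a) a perturbation of $\mu$ preserving the $\A$-moments that contradicts extremality, and (b) a zero-dimensionality argument showing the coordinate ring of $\supp{\mu}$ has dimension at most $|\A|$---but it organizes them in a genuinely different and more unified way. The paper splits into two cases: $\mu$ finitely atomic with $r>|\A|$ atoms, where the perturbation reweights the atoms by a vector $t\in\re^r$ and the failure of a suitable $t$ to exist is shown to force a zero-dimensional ideal with at most $|\A|$ common zeros (via Sturmfels); and $\mu$ not finitely atomic, which is handled by chopping $\supp{\mu}$ into $|\A|+1$ pieces and invoking Bayer--Teichmann on each to reduce to the first case. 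Your $L^\infty(\mu)$-density formulation subsumes both at once: for an $r$-atomic $\mu$ your space $W=\Psi(L^\infty(\mu))$ is the span of the paper's vectors $w^{(\gamma)}$ read through the atoms, and your dichotomy ``$\pi|_W$ injective or not'' is exactly the paper's dichotomy on whether a reweighting $t$ with $(w^{(\af)})^Tt=0$ for $\af\in\A$ but $(w^{(\bt)})^Tt\ne 0$ exists---yet because you never need the atoms, the paper's second case and its Bayer--Teichmann reduction disappear entirely. What the paper's version buys is concreteness (explicit atoms and weights); what yours buys is uniformity. The supporting steps all check out: the identification of $W^\perp$ with $\{q\in\re[x]_d: q|_{\supp{\mu}}=0\}$ (via continuity of polynomials and the definition of support), the dimension count turning injectivity into $\re[x]_{\A}|_{\supp{\mu}}=\re[x]_d|_{\supp{\mu}}$, the degree induction from $\re[x]_{d-1}|_{\supp{\mu}}=\re[x]_d|_{\supp{\mu}}$ to finite-dimensionality of $\re[x]|_{\supp{\mu}}$ (which is where $\deg(\A)<d$ enters, exactly as in the paper), and the final count $|\supp{\mu}|=\dim\big(\re[x]_{\A}|_{\supp{\mu}}\big)\le|\A|$.
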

\begin{proof}
Choose an arbitrary $\mu \in meas(z,K)$.
If $\mu$ is $r$-atomic and $r \leq | \A |$,
then we are done. We derive a contradiction if
either $\mu$ is $r$-atomic with $r > | \A |$,
or $\mu$ is not finitely atomic.

First, consider the case that $\mu$ is $r$-atomic with $r > |\A|$, say,
\[
\mu = c_1 \dt(u_1)  + \cdots + c_r \dt(u_r),
\]
with distinct points $u_1, \ldots, u_r \in K$
and $c_1,\ldots, c_r>0$.
Here, $\dt(v)$ denotes the Dirac measure supported on the point $v$.
For each $\af \in \N_d^n$, denote
\[
w^{(\af)} := ( u_1^{\af}, \ldots,  u_r^{\af})^T \in \re^r.
\]
We show that there must exist $\bt \in \N_{d}^n \backslash \A$ such that the system
\be  \label{at=0btX=0}
\left(w^{(\af)}\right)^T t = 0 \,( \forall \, \af \in \A),  \quad
\left(w^{(\bt)}\right)^T t \ne 0
\ee
has a solution $t=(t_1,\ldots, t_r)$. Suppose otherwise,
then for all $\bt \in \N_{d}^n \backslash \A$,
\[
\left(w^{(\af)}\right)^T t = 0 \,( \forall \, \af \in \A)
\quad \Longrightarrow \quad
\left(w^{(\bt)}\right)^T t  = 0.
\]
This implies that, for every $\bt \in \N_{d}^n \backslash \A$,
the vector $w^{(\bt)}$ is a linear combination of
the vectors $w^{(\af)}(\af\in\A)$, i.e.,
there exist real numbers $p_{\bt,\af}$ such that
\[
w^{(\bt)} = \sum_{\af\in \A}  p_{\bt,\af}  w^{(\af)}.
\]
For each $\bt \in \N_{d}^n \backslash \A$, let
\[
p_\bt:= x^\bt - \sum_{\af\in \A}  p_{\bt,\af}  x^{\af}.
\]
Note that $p_\bt(u_i)=0$ for each $i=1,\ldots,r$.
Let $J$ be the ideal generated by polynomials
$
p_{\bt} ( \bt \in \N_{d}^n \backslash \A ).
$
We show that the ideal $J$ is zero-dimensional, i.e., the quotient space
$\re[x]/J$ is finitely dimensional.
For each $\bt \in \N_{d}^n \backslash \A$,
$x^{\bt} - p_{\bt} \in \re[x]_{\A}$, so
$x^{\bt}$ is equivalent to a polynomial in $\re[x]_{\A}$ modulo $J$.
Since $\deg(A) < d$, $\re[x]_{\A} \subseteq \N_{d-1}^n$.
For each $\eta \in \N_d^n$,
we have $x^\eta = x^\bt x_i$ for some $i\in [n]$ and
$\bt \in \N_{d}^n \backslash \A$. So,
\[
x^\eta \equiv   x_i\sum_{\af\in \A}  p_{\bt,\af}  x^{\af}
= \sum_{\af\in \A}  p_{\bt,\af}  x^{\af+e_i} \quad \mod \quad J.
\]
Because $\af+e_i \in \N_d^n$ for all $\af\in \A$,
$x^{\af+e_i}$ is equivalent to a polynomial
in $\re[x]_{\A}$ modulo $J$, and so is $x^{\eta}$.
Repeating this process and by induction,
we can show that for all $\zeta \in \N^n$ with $\zeta \not\in \A$,
$x^{\zeta}$ is equivalent to a polynomial
in $\re[x]_{\A}$ modulo $J$. This means that the quotient space
$\re[x]/J$ is finitely dimensional, and its dimension
\[
D:=\dim \re[x]/J \leq \dim \re[x]_{\A} = |\A|.
\]
By Proposition~2.1 of Sturmfels~\cite{Stu02},
the number of common zeros of polynomials $p_{\bt}$
($\bt \in \N_{d}^n \backslash \A$) is equal to $D$, counting multiplicities.
Hence, the cardinality of $V_{\cpx}(J)$ is at most $|\A|$.
However, the distinct points $u_1,\ldots,u_r$ all belong to $V_{\cpx}(J)$
and $r>D$, which is a contradiction.
So, there exists $t=(t_1,\ldots, t_r)$ satisfying \reff{at=0btX=0}
for some $\bt \in \N_{d}^n \backslash \A$. We have
\[
(c_1,\ldots, c_r) \pm \eps (t_1,\ldots, t_r) > 0
\]
for $\eps >0$ sufficiently small. Let
{\small
\[
\mu_1 := \sum_{i=1}^r (c_i + \eps t_i) \dt(u_i),\quad
\mu_2 := \sum_{i=1}^r (c_i - \eps t_i) \dt(u_i).
\]
} \noindent
They are all nonnegative Borel measures supported in $K$. Let
\[
z_1 = \int_K [x]_d \mathtt{d} \mu_1, \quad
z_2 = \int_K [x]_d \mathtt{d} \mu_2.
\]
Then, both $z_1$ and $z_2$ belong to $\mc{E}_d(y,K)$ and
$z=\half (z_1+z_2)$. The inequality in \reff{at=0btX=0} implies that
$z_1\ne z,  \, z_2 \ne z.$
This contradicts that $z$ is extreme in $\mc{E}_d(y,K)$.

Second, consider the case that $\mu$ is not finitely atomic.
Then $|\supp{\mu}|=+\infty$.
Choose $|\A|+1$ distinct points, say, $v_1, \ldots, v_{|\A|+1}$, from $\supp{\mu}$.
The support of $\mu$ is the smallest closed set $S$
such that $\mu(\re^n\backslash S)=0$ (cf.~\cite[Section~4]{Lau}).
So, there exists $\eps >0$ such that
$\mu(B(v_i, \eps))>0$ for all $i$ and the balls
$B(v_1,\eps),\ldots, B(v_{|\A|+1}, \eps)$ are disjoint from each other. Let
$
T_i := B(v_i,\eps) \cap \supp{\mu}
$
for $i=1,\ldots, |\A|$, and
{\small
\[
T_{|\A|+1} := \supp{\mu} \, \backslash \bigcup_{i=1}^{|\A|} B(v_i, \eps).
\]
} \noindent
This results in the decomposition
\[
\supp{\mu} = T_1 \cup \cdots \cup T_{|\A|+1}.
\]
Note that
$\mu(T_1) >0, \ldots, \mu(T_{|\A|+1}) >0$ and
$T_i \cap T_j = \emptyset$ whenever $i \ne j$.
For each $j=1,\ldots,|\A|+1$, let $\mu_j = \mu|_{T_j}$, the restriction of $\mu$ on $T_j$.
Then
\[
z = \int_K [x]_d \mathtt{d} \mu_1 + \cdots \int_K [x]_d \mathtt{d} \mu_{|\A|+1}.
\]
Each tms $\int_K [x]_d \mathtt{d} \mu_i$ admits a finitely atomic
measure supported in $T_i$ (cf. \cite[Corollary 5.9]{Lau}).
Hence, there exists a measure $\theta \in meas(z,K)$ that is
$r$-atomic with $r>|\A|$.
Therefore, a contradiction can be obtained as in the first case.

Combining the above two cases, we know the conclusion is true.
\end{proof}

\subsection{Linear optimization over $\mc{E}_d(y,K)$}

Let $d>\deg(\A)$ and $R \in \re[x]_d$.
Consider the linear moment optimization problem
\be  \label{mom:min<R,z>}
\min_{ z  } \quad  \langle R, z \rangle  \quad
s.t. \quad z|_{\A} = y, z \in \mathscr{R}_d(K).
\ee
The feasible set of \reff{mom:min<R,z>} is $\mc{E}_d(y,K)$.
Clearly, if $z^*$ is a unique minimizer of \reff{mom:min<R,z>},
then $z^*$ is an extreme point of $\mc{E}_d(y,K)$.
When $K$ is a compact set,
the cone $\mathcal{P}_d(K)=\{p\in \re[x]_d:\, p|_K \geq 0\}$
is the dual cone of $\mathscr{R}_d(K)$.
This was shown by Tchakaloff \cite{Tch} (also see Laurent \cite[Section~5.2]{Lau}).
Thus, for compact $K$, the dual problem of \reff{mom:min<R,z>} is
\be \label{psd:max<p,y>}
\max_{ p\in \re[x]_{\A} } \quad \langle p, y \rangle \quad
s.t. \quad R - p \in \mathcal{P}_d(K).
\ee

\begin{pro} \label{pr:optimizer}
Let $K\subseteq \re^n$ be a compact set
and $y$ be an $\A$-tms in $\mathscr{R}_{\A}(K)$.
\bit

\item [(i)] If either $R|_K>0$ or $\re[x]_{\A}$ is $K$-full,
then \reff{mom:min<R,z>} and \reff{psd:max<p,y>}
have the same optimal value and \reff{mom:min<R,z>} has a minimizer.

\item [(ii)] If $\mathscr{L}_y$ is strictly $K$-positive,
then \reff{psd:max<p,y>} has a maximizer.

\eit
\end{pro}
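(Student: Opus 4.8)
The plan is to derive both parts from standard convex duality for the linear conic programs \reff{mom:min<R,z>} and \reff{psd:max<p,y>}, using compactness of $K$ and the earlier characterizations. The two key background facts I would invoke are: (a) by Tchakaloff \cite{Tch}, $\mathscr{R}_d(K)$ is a closed convex cone with dual cone $\mathcal{P}_d(K)$, so \reff{psd:max<p,y>} is genuinely the dual of \reff{mom:min<R,z>}; and (b) weak duality holds, i.e., any feasible $p$ for \reff{psd:max<p,y>} satisfies $\langle p,y\rangle\le\langle R,z\rangle$ for any feasible $z$ of \reff{mom:min<R,z>} (this follows because $z|_\A=y$ forces $\langle p,y\rangle=\langle p,z\rangle$, and $\langle R-p,z\rangle\ge 0$ since $R-p\in\mathcal{P}_d(K)$ and $z\in\mathscr{R}_d(K)$).

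For part (i), the goal is to produce a minimizer of \reff{mom:min<R,z>} and to rule out a duality gap. First I would show the optimal value of \reff{mom:min<R,z>} is finite: the feasible set $\mc{E}_d(y,K)$ is nonempty since $y\in\mathscr{R}_\A(K)$, and the objective $\langle R,z\rangle$ is bounded below on it — when $\re[x]_\A$ is $K$-full this is exactly the boundedness established in Lemma~\ref{lm:E-compact} (together with compactness, so a minimizer exists directly by continuity of the linear objective on a compact set); when $R|_K>0$ instead, $\langle R,z\rangle=\int_K R\,\mathtt{d}\mu\ge 0$ for any representing measure $\mu$, giving a finite lower bound, and I would still need attainment. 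For attainment in the $R|_K>0$ case, the cleaner route is to invoke strong duality for \reff{mom:min<R,z>}–\reff{psd:max<p,y>}: the point $p=0$ gives $R-0=R\in\mathcal{P}_d(K)$, and in fact $R$ lies in the interior of $\mathcal{P}_d(K)$ (as $R|_K>0$ and $K$ compact, so $R$ stays positive under small coefficient perturbations), which is a Slater-type condition on the dual side \reff{psd:max<p,y>}; standard conic duality (cf. \cite[Section~2.4]{BTN}) then yields no duality gap and attainment of the \emph{primal} optimum \reff{mom:min<R,z>}. In the $K$-full case, attainment of \reff{mom:min<R,z>} is immediate from compactness of $\mc{E}_d(y,K)$; to also get equality of optimal values one again appeals to conic duality, noting that $K$-fullness gives an interior/relative-interior point on the appropriate side (a $K$-full $p$ produces a feasible $z$ of \reff{mom:min<R,z>} with a positive-definite-type interior property, or equivalently one argues zero gap via closedness of $\mathscr{R}_d(K)$ and a separation argument). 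So the structure is: finiteness $\Rightarrow$ Slater on one side $\Rightarrow$ strong duality and attainment.

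For part (ii), assuming $\mathscr{L}_y$ is strictly $K$-positive, the goal is attainment in the dual \reff{psd:max<p,y>}. Here the natural approach is a coercivity/compactness argument on the feasible set of \reff{psd:max<p,y>} modulo its lineality. Take a maximizing sequence $p_j$ with $R-p_j\in\mathcal{P}_d(K)$; I would like to show $\{p_j\}$ can be taken bounded. The obstruction to boundedness is directions $q\in\re[x]_\A$ along which $p_j$ can run to infinity while staying feasible — such a $q$ would have to satisfy $-q\in\mathcal{P}_d(K)$ asymptotically, i.e. (after normalizing) $q|_K\le 0$ with $q\not\equiv 0$ on $K$ and $\langle q,y\rangle\ge 0$; but strict $K$-positivity applied to $-q$ (which is nonnegative on $K$ and not identically zero there) gives $\mathscr{L}_y(-q)>0$, i.e. $\langle q,y\rangle<0$, contradicting that moving in direction $q$ does not decrease the objective. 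Making this rigorous: decompose $\re[x]_\A$ into the lineality space $L=\{q: q|_K\equiv 0\}$ of the feasibility cone direction and a complement; on the complement the above argument shows the feasible $p$'s (projected) are bounded, and on $L$ the objective $\langle\cdot,y\rangle$ is identically zero (since $q|_K\equiv 0$ and $y$ has a representing measure supported in $K$, $\langle q,y\rangle=\int_K q\,\mathtt d\mu=0$), so one can always choose a representative with bounded norm; closedness of $\mathcal{P}_d(K)$ (compact $K$) then gives a limit $p^*$ that is feasible and optimal.

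The main obstacle I anticipate is the attainment/no-gap part of (i) in the $K$-full (rather than $R|_K>0$) case: there the Slater condition is less transparent because $R$ need not be in the interior of $\mathcal{P}_d(K)$, so one must instead verify a relative-interior condition on the primal side \reff{mom:min<R,z>} or argue the absence of a gap directly from the closedness of $\mathscr{R}_d(K)$ via Tchakaloff plus a Hahn–Banach separation — combined with the already-established compactness of $\mc{E}_d(y,K)$ from Lemma~\ref{lm:E-compact} to get primal attainment for free. Similarly, in (ii) the careful handling of the lineality space $L$ (distinguishing $q|_K\equiv 0$ from $q\equiv 0$, which matters precisely when $K$ is not a determining set) is the delicate point, and it is exactly where the \emph{strictness} in strict $K$-positivity is used.
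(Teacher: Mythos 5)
Your part (ii) and the $R|_K>0$ half of part (i) are essentially the paper's own proof: for (ii) the paper also takes a maximizing sequence, splits $\re[x]_{\A}$ into $S_1=\{f: f|_K\equiv 0\}$ and its orthogonal complement, restricts the sequence to the complement (using that $\langle f,y\rangle=0$ for $f\in S_1$ since $y$ has a representing measure), and derives a contradiction with strict $K$-positivity from a normalized limit $p^*$ with $-p^*|_K\geq 0$, $-p^*|_K\not\equiv 0$, $\langle p^*,y\rangle=0$; and for $R|_K>0$ the paper likewise observes that $p=0$ is a Slater point of \reff{psd:max<p,y>} and invokes conic duality from \cite[Section~2.4]{BTN}.

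The one genuine weak spot is the $K$-full case of (i). The paper's resolution is a single observation you did not quite land on: if $q\in\re[x]_{\A}$ satisfies $q|_K>0$, then $p=-\lambda q$ for $\lambda$ large is an \emph{interior} point of the feasible set of \reff{psd:max<p,y>}, because $R+\lambda q>0$ on the compact set $K$ and positivity on $K$ survives small coefficient perturbations. So the Slater condition holds on the dual side in \emph{both} cases of (i), and the same one-line appeal to strong duality finishes the proof. Your first suggested fix --- that $K$-fullness yields a Slater-type feasible point of the primal \reff{mom:min<R,z>} --- does not work: $K$-fullness is a property of the monomial space $\re[x]_{\A}$, and if, say, $y=[u]_{\A}$ for a single $u\in K$ with $\A$ rich enough, every $z\in\mc{E}_d(y,K)$ is forced to lie on the boundary of $\mathscr{R}_d(K)$, so no primal interior point exists. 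Your second suggested fix (closedness of $\mathscr{R}_d(K)$ plus separation) can be made to work, since $\mathscr{R}_d(K)$ has a compact base when $K$ is compact, but you would have to carry out that closed-image argument; the paper's dual-Slater observation avoids it entirely. Your use of Lemma~\ref{lm:E-compact} to get primal attainment in the $K$-full case is a correct (if redundant, once strong duality is in hand) alternative.
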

\begin{proof}
(i) If $R|_K>0$, then the origin is an interior point of \reff{psd:max<p,y>}.
If $\re[x]_{\A}$ is $K$-full, then there exists
$q \in \re[x]_{\A} \subseteq \re[x]_d$ that is
strictly positive on $K$, so \reff{psd:max<p,y>} also has an interior point.
The problem \reff{mom:min<R,z>} is feasible, because $y \in \mathscr{R}_{\A}(K)$.
Thus, the strong duality holds and the conclusion is true
(cf. \cite[Section~2.4]{BTN}).

(ii) From $y \in \mathscr{R}_{\A}(K)$, we know $y$ has a flat extension
$w \in \re^{ \N_{2k}^n }$ for some $k$ (cf.~Proposition~\ref{pro:atms:extn}).
Then the truncation $w|_d$ is a feasible point for \reff{mom:min<R,z>}.
By weak duality, the optimal value of \reff{psd:max<p,y>} is finite, say, $\eta$.
Clearly, the feasible set of \reff{psd:max<p,y>} is closed.
Let $\{p_k\} \subseteq \re[x]_{\A}$ be a sequence such that
each $R-p_k \in \mc{P}_d(K)$ and
\[
\langle p_k, y \rangle \to  \eta, \quad \mbox{ as } \, k \to \infty.
\]
Let
$
S_1 = \{ f \in \re[x]_{\A}: \, f|_K \equiv 0 \},
$
and $S_2$ be the orthogonal complement of $S_1$ in $\re[x]_{\A}$. So,
$\re[x]_{\A} = S_1 + S_2$,  $S_1 \perp S_2$.
Because $\langle f, y \rangle =0$ and $f|_K \equiv 0$ for all $f\in S_1$,
the above $p_k$ can be chosen such that $p_k \in S_2$
and $R-p_k \in \mc{P}_d(K)$.

If the sequence $\{p_k\}$ is bounded, then any of its accumulation points
is a maximizer of \reff{psd:max<p,y>} and we are done.
Suppose otherwise $\{p_k\}$ is unbounded, say, $\|p_k\|_2 \to \infty$.
Let $\hat{p}_k = p_k / \|p_k\|_2$ be the normalization.
The sequence $\{\hat{p}_k\}$ is bounded, and we can generally assume
$\hat{p}_k \to p^* \in S_2$. Clearly, $\|p^*\|_2=1$ and
\[
\langle \hat{p}_k, y \rangle =
\langle p_k, y \rangle / \|p_k\|_2 \to  0, \quad \mbox{ as } \, k \to \infty.
\]
This implies that $\langle p^*, y \rangle = 0$.
From $(R-p_k)/\|p_k\|_2 \in \mc{P}_d(K)$, we get $-p^*|_K \geq 0$.
Since $p^* \in S_2$ and $\|p^*\|_2=1$, we know
$-p^*|_K \not\equiv 0$. The strict $K$-positivity of $\mathscr{L}_y$
implies $\langle -p^*, y \rangle > 0$, which is a contradiction.
Thus, the sequence $\{p_k\}$ must be bounded,
and the proof is complete.
\end{proof}

\section{A semidefinite algorithm for $\A$-TKMPs}   \label{sec:sdalg}
\setcounter{equation}{0}

In this section, we present a numerical algorithm
for solving $\A$-truncated $K$-moment problems.
To determine whether an $\A$-tms $y$ admits a $K$-measure or not,
by Proposition~\ref{pro:atms:extn}, it is equivalent to
investigating whether $y$ has a flat extension or not.
If it does not exist, then $y$ does not admit a $K$-measure.
If it exists, then we can get a finitely atomic representing measure for $y$.

The extension set $\mc{E}_d(y,K)$, defined in \reff{Ed(y,K)},
is very useful in getting flat extensions. By Lemma~\ref{lm:extrm=>fin},
all $K$-measures admitted by extreme points of $\mc{E}_d(y,K)$
are $r$-atomic with $r\leq |\A|$.
Clearly, if \reff{mom:min<R,z>} has a unique minimizer $z^*$,
then $z^*$ is an extreme point of $\mc{E}_d(y,K)$.
A very useful fact is that when we minimize a generic linear function
over a compact convex set, the minimizer is unique (cf.~\cite[Theorem~2.2.4]{Sneid}).
%
%
If $K$ is compact and $\re[x]_{\A}$ is $K$-full, the set $\mc{E}_d(y,K)$
is compact convex by Proposition~\ref{lm:E-compact},
and \reff{mom:min<R,z>} has a minimizer for all $R$.
If $\re[x]_{\A}$ is not $K$-full,
we typically need to choose $R$ that is positive definite on $K$,
to guarantee \reff{mom:min<R,z>} has a minimizer.
Therefore, to get an extreme point of $\mc{E}_d(y,K)$,
it is enough to solve \reff{mom:min<R,z>}
for a generic positive definite $R$,
no matter $\re[x]_{\A}$ is $K$-full or not.

The cone $\mathscr{R}_d(K)$ is typically quite difficult to describe,
and generally we can not solve \reff{mom:min<R,z>} directly.
Recently, there is much work on solving moment optimization problems
like \reff{mom:min<R,z>} by semidefinite relaxations.
We refer to \cite{FiNi2012,HN04,Las08,LasBok}.
The basic idea is to approximate the cone $\mathscr{R}_d(K)$ by semidefinite programs.
So, we apply semidefinite relaxations to solve \reff{mom:min<R,z>}.
This produces a semidefinite algorithm for solving $\A$-TKMPs.

Suppose $K$ is compact and $K\subseteq B(0,\rho)$.
Let $b:=\rho^2 - \|x\|_2^2$. Recall the polynomial tuples $h$ and $g$
for describing $K$ as in \reff{def:K}. For convenience, denote
\[
g_B := (g_1, \ldots, g_{m_2}, b).
\]
The set $K$ can be equivalently described as
$h(x)=0, g_B(x) \geq 0$.
Recall the definitions of $Q_k(g)$ in \reff{qmod-g}
and its dual cone $\Phi_k(g)$ in \reff{df:Phi(g)}. Note that
\[
Q_k(g_B) = Q_k(g) + Q_k(b), \quad
\Phi_k(g_B) = \Phi_k(g) \cap \Phi_k(b).
\]

Let $k \geq d/2$ be an integer.
The $k$-th order semidefinite relaxation of \reff{mom:min<R,z>} is
\be \label{qmod:min<R,w>}
\min_{ w }
\quad  \langle R, w \rangle \quad \quad s.t.  \quad \quad
w|_{\A} = y, \, w \in \Phi_k(g_B) \cap E_k(h).
\ee
The dual optimization problem of \reff{qmod:min<R,w>} is
\be  \label{putsos:max<p,y>}
\max_{ p \in \re[x]_{\A} }
\quad  \langle p, y \rangle \quad
s.t.  \quad R - p \in I_{2k}(h) + Q_k(g_B).
\ee
For every $w$ feasible for \reff{qmod:min<R,w>}
and every $p$ feasible for \reff{putsos:max<p,y>}, we have
\[
\langle R, w \rangle \geq  \langle p, y \rangle,
\]
by weak duality. Thus, the optimal value of \reff{qmod:min<R,w>} is always
greater than or equal to that of \reff{putsos:max<p,y>}.

\begin{pro} \label{pro:alg-atkmp}
Let $y \in \re^{\A}$, $d> \deg(\A)$, and
$K\subseteq B(0,\rho)$ be as in\reff{def:K}.

\bit

\item [(i)] If $w^*$ is a minimizer of \reff{qmod:min<R,w>}
and has a flat truncation $w^*|_{2t}$ with $2t\geq \deg(\A)$,
then $y$ admits a finitely atomic $K$-measure $\mu$.

\item [(ii)] If \reff{qmod:min<R,w>} is infeasible for some $k$,
then $y$ admits no $K$-measures.

\eit

\end{pro}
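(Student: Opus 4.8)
The plan is to obtain part (i) as an immediate consequence of the Curto--Fialkow flatness theorem (Theorem~\ref{thm-CF:fec}) and part (ii) by contraposition; in both cases the argument amounts to unwinding the definitions of the cones $\Phi_k(g_B)$, $E_k(h)$ and of flatness.

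For part (i), I would first record that, being feasible for \reff{qmod:min<R,w>}, the minimizer $w^*$ satisfies $w^*|_{\A}=y$, and that by hypothesis the truncation $z:=w^*|_{2t}$ (with $2t\geq\deg(\A)$) is flat, i.e.\ it satisfies the localizing conditions \reff{loc-M>=0} and the rank condition \reff{cond:FEC} at order $t$. Applying Theorem~\ref{thm-CF:fec} to $z$ then produces a unique $K$-measure $\mu$, which is $\rank\,M_t(z)$-atomic, hence finitely atomic, and whose moments up to order $2t$ are exactly the entries of $z$. Since $2t\geq\deg(\A)$ forces $\A\subseteq\N_{2t}^n$, for every $\af\in\A$ one has $y_\af=w^*_\af=z_\af=\int_K x^\af\,\mathtt{d}\mu$, so $\mu$ is a finitely atomic $K$-measure admitted by $y$.

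For part (ii), I would argue by contraposition: assuming $y$ admits a $K$-measure $\mu$, I produce a feasible point of \reff{qmod:min<R,w>} for every $k$. Since $K\subseteq B(0,\rho)$ is closed (being defined by $h=0$ and $g\geq 0$) and bounded, it is compact, so all moments $\int_K x^\af\,\mathtt{d}\mu$ ($\af\in\N^n$) are finite; put $w_\af:=\int_K x^\af\,\mathtt{d}\mu$ for $\af\in\N_{2k}^n$. Then $w|_{\A}=y$ because $\mu$ represents $y$ on $\A$, and repeating the computation used to derive \reff{loc-M>=0} gives $L_{h_i}^{(k)}(w)=0$ for all $i$ and $L_{g_j}^{(k)}(w)\succeq 0$ for every entry $g_j$ of the tuple $g_B$, where one uses that the extra localizer $b=\rho^2-\|x\|_2^2$ is nonnegative on $K$. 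Hence $w$ is feasible for \reff{qmod:min<R,w>}, contradicting its infeasibility, and therefore $y$ admits no $K$-measure.

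I do not expect a genuine obstacle here: the whole argument is bookkeeping with definitions. The only points needing a little care are, in (i), checking that the flat-truncation hypothesis is precisely the hypothesis of Theorem~\ref{thm-CF:fec} and that the measure it yields is supported in $K$; and, in (ii), invoking compactness of $K$ so that all higher moments of $\mu$ exist (so that $w$ is well defined), together with the trivial remark that $b\geq 0$ on $K$, which is what makes feasibility with respect to $g_B$ rather than $g$ automatic.
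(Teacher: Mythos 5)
Part (i) of your proposal is correct and is essentially the paper's own argument: set $z:=w^*|_{2t}$, apply Theorem~\ref{thm-CF:fec}, and use $2t\geq\deg(\A)$ to transfer the resulting finitely atomic $K$-measure back to $y$.

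In part (ii) your overall strategy (contraposition: exhibit a feasible point of \reff{qmod:min<R,w>} from a representing measure) is the same as the paper's, but there is a gap at the step where you set $w_\af:=\int_K x^\af\,\mathtt{d}\mu$ for all $\af\in\N_{2k}^n$ and justify finiteness by compactness of $K$. The paper's definition of ``$y$ admits $\mu$'' only requires the moments $\int_K x^\af\,\mathtt{d}\mu$ to exist for $\af\in\A$; it does not require $\mu$ to be a finite measure, and since $0$ need not belong to $\A$ (it does not in the CP and SOEP applications, nor in Example~\ref{em:prob-square}), you cannot conclude $\mu(K)<\infty$. Compactness of $K$ bounds the integrand, not the total mass: on $K=[-1,1]^2$ with the $\A$ of Example~\ref{em:prob-square}, for instance, adding an infinite mass at the origin leaves every $\A$-moment unchanged (all those monomials vanish there) but makes $w_0=\int_K 1\,\mathtt{d}\mu=+\infty$, so your $w$ is not defined. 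This is precisely why the paper first invokes Proposition~\ref{pro:atms:extn} together with Theorem~\ref{thm-CF:fec} to replace $\mu$ by a finitely atomic measure $c_1\dt(u_1)+\cdots+c_r\dt(u_r)$ with all $u_i\in K$ and $c_i>0$, and only then forms the feasible point $w=c_1[u_1]_{2k}+\cdots+c_r[u_r]_{2k}$, whose entries are trivially finite. Your proof is repaired by inserting that one reduction; after it, your verification that $w|_{\A}=y$, $L_{h_i}^{(k)}(w)=0$, $L_{g_j}^{(k)}(w)\succeq0$, and $L_{b}^{(k)}(w)\succeq0$ (using $b=\rho^2-\|x\|_2^2\geq 0$ on $K$) is correct and yields the desired contradiction.
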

\begin{proof}
(i) Let $z=w^*|_{2t}$. Then $z$ is a flat tms.
So, $z$ admits a finitely atomic $K$-measure $\mu$, by Theorem~\ref{thm-CF:fec}.
Since $2t\geq \deg(\A)$, $z|_{\A}=y$ and $z$ is an extension of $y$.
Thus, $y$ also admits the measure $\mu$.

(ii) Suppose otherwise $y$ admits a $K$-measure.
By Proposition~\ref{pro:atms:extn},
$y$ can be extended to a flat tms $w_1 \in \re^{\N_{2k_1}^n}$ satisfying \reff{loc-M>=0}.
By Theorem~\ref{thm-CF:fec}, the tms $w_1$ admits a $r$-atomic $K$-measure, say,
$
w_1 = c_1 [u_1]_{2k_1} + \cdots + c_r [u_r]_{2k_1}
$
with all $c_i>0$ and $u_i \in K$. Let
$w = c_1 [u_1]_{2k} + \cdots + c_r [u_r]_{2k}$,
then $w$ is feasible for \reff{qmod:min<R,w>},
a contradiction. So, $y$ admits no $K$-measures.
\end{proof}

Proposition~\ref{pro:alg-atkmp} can be applied to determine whether
an $\A$-tms $y$ admits a $K$-measure or not.
We can start with an order $k \geq d/2$.
If \reff{qmod:min<R,w>} is infeasible, then we know $y$
admits no $K$-measures, by Proposition~\ref{pro:alg-atkmp} (ii).
If \reff{qmod:min<R,w>} is feasible, we solve it for a minimizer $w^*$ if it exists.
If $w^*$ has a flat truncation $w^*|_{2t}$ with $2t\geq \deg(\A)$,
then $y$ and $w^*|_{2t}$ commonly admit
a finitely atomic $K$-measure.
If such a flat truncation does not exist,
we increase $k$ by one, and repeat the above.
This produces the following algorithm.

\begin{alg} \label{sdpalg:A-tkmp}
A semidefinite algorithm for $\A$-TKMPs. \\
\noindent
{\bf Input:}\, An $\A$-tms $y$, an even degree $d>\deg(\A)$,
a semialgebraic set $K$ as in \reff{def:K} and $\rho>0$ with $K\subseteq B(0,\rho)$.

\noindent
{\bf Output:}\, A finitely atomic $K$-representing measure for $y$,
or an answer that it does not exist.

\noindent
{\bf Procedure:} \quad
\bdes

\item [Step 0] Choose a generic $R \in \Sig_{n,d}$ and let $k:=d/2$.

\item [Step 1] Solve \reff{qmod:min<R,w>}.
If \reff{qmod:min<R,w>} is infeasible,
output the answer that $y$ admits no $K$-measures, and stop.
If \reff{qmod:min<R,w>} is feasible,
get a minimizer $w^{*,k}$. Let $t := \min\{d_K, \deg(\A)\}$.

\item [Step 2] Let $z:=w^{*,k}|_{2t}$.
Check whether the rank condition~\reff{cond:FEC} is satisfied or not.
If yes, go to Step~3; otherwise, go to Step~4.

\item [Step 3] Compute the finitely atomic measure $\mu$ admitted by $z$:
\[
\mu = c_1 \dt(u_1) + \cdots + c_r \dt(u_r),
\]
where $r=\rank M_t(z)$, each $u_i \in K$ and $c_i >0$.
Output $\mu$, and stop.

\item [Step 4] If $t<k$, set $t:=t+1$ and go to Step~2;
otherwise, set $k:=k+1$ and go to Step~1.

\edes

\end{alg}

For the input, we typically choose $d=2\lceil (\deg(\A)+1)/2 \rceil$,
which is the minimum as required.
In Step~0, the genericity means that $R$ is chosen in $\Sig_{n,d}\backslash \Theta$,
for a set $\Theta \subseteq \re^{\N_d^n}$ having zero Lebesgue measure.
In computations, we can choose $R$ as $[x]_{d/2}^TG^TG[x]_{d/2}$,
with $G$ a random square matrix obeying Gaussian distribution.
In Step~2, the rank condition \reff{cond:FEC} is usually checked
by using numerical ranks, due to computer round-off errors.
In our numerical experiments, we evaluate the rank of a matrix
as the number of its singular values that are greater than or equal to $10^{-6}$.
In Step~3, the method in \cite{HenLas05} can be used
to get a $r$-atomic $K$-measure for $z$.
Algorithm~\ref{sdpalg:A-tkmp} can be easily implemented
by using software {\tt GloptiPoly~3} \cite{GloPol3}
and {\tt SeDuMi} \cite{sedumi}.
Example~\ref{em:prob-square} shows how to do this.

\begin{remark}  \label{rmk:min-spt}
If $y$ admits a $K$-measure, Algorithm~\ref{sdpalg:A-tkmp} can
produce a $r$-atomic $K$-representing measure with $r\leq |\A|$,
for {\it almost all} $R \in \Sig_{n,d}$,
either asymptotically or in finitely many steps (cf. Section~\ref{sec:cvganly}).
The obtained $r$ may not be minimum.
It is typically quite difficult to find a representing measure
whose support is minimum. This is an important future work,
and we do not focus on it here.
However, this question can be treated in some way.
For instance, Algorithm~\ref{sdpalg:A-tkmp} can be applied repeatedly
for a certain number of times, say, $N$.
In each time, a different $R$ is generated,
and we typically get different $r$-atomic measures.
Among these $N$ times, we choose the measure
whose support is minimum. Our numerical experiments show that
this often produces a $r$-atomic measure with
$r$ equal or close to the minimum.
Of course, this is heuristic, and there is no theoretical guaranty.
\end{remark}

\begin{exm}  \label{em:prob-square}
Consider the $\A$-tms $y$ with $(\af, y_\af)$ given as:
\bcen
\btab{|c|c|c|c|c|c|c|c|} \hline
$\af$ &  $\bpm 2\\0\epm$ & $\bpm 0\\2\epm$ & $\bpm 2\\ 1\epm$ &
$\bpm 1\\ 2 \epm$ & $\bpm 2\\ 2\epm$ & $\bpm 4 \\ 2 \epm$ & $\bpm 2\\ 4 \epm$  \\  \hline
$y_{\af}$ & 1/3 & 1/3 & 0 & 0 & 1/9 & 1/15 & 1/15 \\ \hline
\etab.
\ecen
This $y$ admits a measure supported on the square $[-1,1]^2$,
since each $y_\af$ is the average of $x^\af$ on $[-1,1]^2$.
Clearly, $h=\emptyset$, $g=(1-x_1^2,1-x_2^2)$ and $[-1,1]^2 \subseteq B(0,\sqrt{2})$.
In \reff{qmod:min<R,w>}, the tuple $g_B$ can be replaced by $g$,
because $\Psi_k(g_B) = \Psi_k(g)$.
We apply Algorithm~\ref{sdpalg:A-tkmp} to this $\A$-TKMP.
The order $k=4$ is typically enough to get a flat extension.
This can be done by the syntax in {\tt GloptiPoly~3} \cite{GloPol3}
and {\tt SeDuMi} \cite{sedumi} as follows:
\bcen
\begin{verbatim}
   mpol x  2;
   Amon = [x(1)^2  x(2)^2  x(1)^2*x(2)  x(1)*x(2)^2  ...
   x(1)^2*x(2)^2  x(1)^4*x(2)^2  x(1)^2*x(2)^4];
   y=[ 1/3  1/3  0  0  1/9  1/15  1/15];
   conmom = [mom(Amon)==y];
   K = [1-x(1)^2>=0, 1-x(2)^2>=0];
   bracx = mmon(x,0,4);  G = randn(length(bracx));
   R = bracx'*(G'*G)*bracx;  k = 4;
   P = msdp(min(mom(R)),K,conmom,k);
   [A,b,c,S] = msedumi(P);
   [xsol,ysol,info] = sedumi(A,b,c,S);
   dvar = c-A'*ysol;
   Mw = mat( dvar(S.f+1:S.f+S.s(1)^2) );
\end{verbatim}
\ecen
In the above, {\tt Mw} is the moment matrix $M_k(w)$.
By Remark~\ref{rmk:min-spt}, we run Algorithm~\ref{sdpalg:A-tkmp} for a couple of times.
In each time, the computed {\tt Mw}
satisfies the rank condition \reff{cond:FEC}, and we got a $r$-atomic measure
with $r \leq |\A|=7$, by the method in \cite{HenLas05}.
The smallest $r$ we got is $3$, which occurs in the representing measure
$\Sig_{i=1}^3 c_i \dt(u_i)$ with $u_i$ and $c_i$ given as
\footnote{Throughout the paper, only four decimal digits are shown
for supports and weights.}
\bcen
\btab{|c|c|c|c|} \hline
$u_i$ &  (-0.8524, 0.8910)  &  (0.2109, 0.8873)  &  (0.6697, -0.3743)   \\  \hline
$c_i$ &  0.1231 & 0.2061 &  0.5233 \\ \hline
\etab.
\ecen
\end{exm}

\begin{exm}  \label{exm:pb:sphere}
Consider the $\A$-tms $y$ with $(\af, y_\af)$ given as:
\bcen
\btab{|c|c|c|c|c|c|c|} \hline
$\af$ & $\bpm 4 \\ 0 \\ 0 \epm$ & $\bpm  2 \\ 0 \\2 \epm$ &   $\bpm 0 \\ 2 \\ 2 \epm$ &
$\bpm 4 \\ 0 \\ 2 \epm$ &  $\bpm 2 \\ 2 \\ 2\epm$ & $\bpm 0 \\ 0 \\ 6\epm$ \\  \hline
$y_{\af}$ & 1/5 & 1/15 & 1/15 & 3/105 & 1/105  & 1/7  \\ \hline
\etab.
\ecen
The above $y$ admits a measure on $\mathbb{S}^2$,
because each $y_\af$ is the average of $x^\af$ on $\mathbb{S}^2$.
The sphere $\mathbb{S}^2$ is defined by $h=(\|x\|_2^2-1)$ and $g=\emptyset$.
Clearly, $\mathbb{S}^2 \subseteq B(0,1)$.
In \reff{qmod:min<R,w>}, the tuple $g_B=(1-\|x\|_2^2)$ can be replaced by $g=\emptyset$,
because $\Psi_k(g_B) \cap E_k(h) = \Psi_k(g) \cap E_k(h)$.
Like in Example~\ref{em:prob-square}, we run Algorithm~\ref{sdpalg:A-tkmp}
for a couple of times. In each time, we got a $r$-atomic measure with $r\leq |\A|=6$.
The smallest $r$ we got is $2$, occurring in the representing measure
$\Sig_{i=1}^2 c_i \dt(u_i)$ with $u_i,c_i$ given as
\bcen
\btab{|c|c|c|} \hline
$u_i$ & (0.3434    0.4542    0.8221) & (0.8999    0.2550   -0.3539) \\ \hline
$c_i$ &      0.4610     &    0.2952 \\ \hline
\etab.
\ecen
\end{exm}

\begin{exm} (random instances)   \label{exm:rand:ball}
We apply Algorithm~\ref{sdpalg:A-tkmp}
to solve some randomly generated $\A$-TKMPs.
Let $K=B(0,1)$ be the unit ball in $\re^n$.
For each pair $(n,m)$ from $\{(2,10),(3,8),(4,6),(5,4)\}$,
we randomly generate a subset
$\A \subseteq \N_m^n$ with cardinality $10, 20, 30$ respectively.
For each $\A$, generate $N:=\binom{n+m}{m}$ points randomly from the ball $B(0,1)$,
say, $u_1, \ldots, u_N$, and let
$y = c_1 [u_1]_{\A}+\cdots+ c_N [u_N]_{\A}$, with $c_i>0$ random.
This is because if a tms in $\re^{ \N_m^n }$ admits a measure,
then it must admit an $N$-atomic measure (cf.~\cite{BT}).
\bcen
\btab{|c|c|c|c|} \hline
$(n,m)$  &  $|\A|=10$ & $|\A|=20$ & $|\A|=30$ \\ \hline
(2, 10) & 4,5,6,7,8  &  8,9,10,11,12,13    &  9,10,11,12,13,14,15  \\   \hline
(3, 8) & 4,5,6,7,8,9   &  9,10,11,12,13,14,15    &  11,12,13,14,15,16,17,18  \\   \hline
(4, 6) & 5,6,7,8,9   &   8,9,10,11,12,13,14    &  10,11,12,13,14,15,16,17     \\   \hline
(5, 4) & 3,4,5,6,7,8 &   7,8,9,10,11,12         &  10,11,12,13,14,15,16  \\  \hline
\etab
\ecen
For each triple $(n,m,|\A|)$, we generate $100$ random instances.
For every generated instance, Algorithm~\ref{sdpalg:A-tkmp}
returned a $r$-atomic representing measure.
The values of obtained $r$ are listed in the above table.
They are all smaller than $|\A|$.
This is justified by Proposition~\ref{pr:alg:!min}(ii).
\end{exm}

\section{Convergence Analysis} \label{sec:cvganly}
\setcounter{equation}{0}

In this section, we analyze the convergence of Algorithm~\ref{sdpalg:A-tkmp}.
Two kinds of convergence will be investigated:
{\it asymptotic} convergence, and {\it finite} convergence.
For asymptotic convergence, we mean that there exists $t$ such that
the truncated sequence $\{w^{*,k}|_{2t}\}$
($w^{*,k}$ is a minimizer of \reff{qmod:min<R,w>} with order $k$)
is bounded and all its accumulation points are flat extensions of $y$,
if $y$ admits a $K$-measure.
For finite convergence, we mean that there exists $k$ such that,
either \reff{qmod:min<R,w>} is infeasible, or there exists $t$ such that
the truncation $w^{*,k}|_{2t}$ is flat.
We begin with some properties of semidefinite relaxations
\reff{qmod:min<R,w>}-\reff{putsos:max<p,y>}.

\subsection{Properties of semidefinite relaxations}

\begin{pro}  \label{pro:sdprlx}
Let $d> \deg(\A)$ be even, and $K\subseteq B(0,\rho)$ be as in\reff{def:K}.
Suppose $y \in \re^{\A}$ admits a $K$-measure.

\bit

\item [(i)] If $R$ lies in the interior of $\Sig_{n,d}$,
then, for all $k \geq d/2$, \reff{qmod:min<R,w>} is feasible and has a minimizer,
and \reff{qmod:min<R,w>}-\reff{putsos:max<p,y>} have the same optimal value.

\item [(ii)] Let $R_{min}$ be the optimal value of \reff{mom:min<R,z>}.
Then, the optimal values of \reff{qmod:min<R,w>} and \reff{putsos:max<p,y>}
are less than or equal to $R_{min}$,
for all $k \geq d/2$.

\item [(iii)] Suppose $R$ lies in the interior of $\Sig_{n,d}$.
Then, there exists a constant $C=C(R)$ such that, for all
$w$ that is a minimizer of \reff{qmod:min<R,w>} with order $k$,
\be \label{w2t<=rhoR}
\|w|_{2t}\|_2 \leq  (1+\rho^2+\cdots+\rho^{2t}) C, \quad
t=0,1,\ldots,k.
\ee

\eit
\end{pro}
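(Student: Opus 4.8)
The plan is to prove (ii) first, then combine it with the interior-SOS structure of $R$ to obtain the boundedness estimate (iii), and finally deduce (i) from a Slater point for the dual.

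For (ii), the key point is that every feasible point of \reff{mom:min<R,z>} lifts to a feasible point of \reff{qmod:min<R,w>} of equal cost. Given $z\in\mc{E}_d(y,K)$, Proposition~\ref{pro:atms:extn}(i) provides a finitely atomic $K$-measure $\nu=\sum_i c_i\dt(u_i)$ representing $z$, with $u_i\in K\subseteq B(0,\rho)$ and $c_i>0$. Put $w:=\sum_i c_i[u_i]_{2k}$. Then $w|_{\A}=z|_{\A}=y$; moreover $L_{h_i}^{(k)}(w)=\sum_i c_i h_i(u_i)\,[u_i]\,[u_i]^T=0$, each $L_{g_j}^{(k)}(w)=\sum_i c_i g_j(u_i)\,[u_i]\,[u_i]^T\succeq 0$, and $L_{b}^{(k)}(w)=\sum_i c_i(\rho^2-\|u_i\|_2^2)\,[u_i]\,[u_i]^T\succeq 0$ since $u_i\in B(0,\rho)$; hence $w$ is feasible for \reff{qmod:min<R,w>}. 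Finally $\langle R,w\rangle=\int_K R\,\mathtt{d}\nu=\langle R,z\rangle$ since $\deg R=d\le 2k$. Minimizing over $z\in\mc{E}_d(y,K)$ shows the optimal value of \reff{qmod:min<R,w>} is at most $R_{min}$, and weak duality transfers the same bound to \reff{putsos:max<p,y>}.

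For (iii), let $w$ be a minimizer of \reff{qmod:min<R,w>} of order $k$; by (ii), $\langle R,w\rangle\le R_{min}$. Since $R$ is interior to $\Sig_{n,d}$, it admits a Gram representation $R=[x]_{d/2}^T Q\,[x]_{d/2}$ with $Q\succ 0$, so $R_{min}\ge\langle R,w\rangle=\mathrm{Trace}(Q\,M_{d/2}(w))\ge\lambda_{\min}(Q)\,\mathrm{Trace}\,M_{d/2}(w)\ge\lambda_{\min}(Q)\,w_0$, whence $w_0\le R_{min}/\lambda_{\min}(Q)=:C(R)$. The constraint $L_{b}^{(k)}(w)\succeq 0$ with $b=\rho^2-\|x\|_2^2$ gives, along its diagonal, $\rho^2 w_{2\bt}-\sum_{i=1}^n w_{2\bt+2e_i}\ge 0$ for $|\bt|\le k-1$, while $M_k(w)\succeq 0$ forces $w_{2\gm}\ge 0$ for $|\gm|\le k$. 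Summing the first inequality over all $\bt$ with $|\bt|=j$ and writing $S_j:=\sum_{|\bt|=j}w_{2\bt}\ge 0$, one gets $S_{j+1}\le\rho^2 S_j$, so $S_j\le\rho^{2j}w_0$ and $\mathrm{Trace}\,M_t(w)=\sum_{j=0}^{t}S_j\le(1+\rho^2+\cdots+\rho^{2t})\,w_0$ for $t\le k$. Positive semidefiniteness of $M_t(w)$ also gives $w_\af^2\le w_{2\bt}w_{2\gm}$ whenever $\af=\bt+\gm$ with $|\bt|,|\gm|\le t$; choosing such a splitting for every $\af$ with $|\af|\le 2t$ and summing, and noting that the resulting pairs $(\bt,\gm)$ are pairwise distinct and all terms nonnegative, yields $\|w|_{2t}\|_2\le\mathrm{Trace}\,M_t(w)$. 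Combining the last two bounds proves \reff{w2t<=rhoR}. I expect this estimate chain to be the only nonroutine part: the two combinatorial observations — summing the localizing-matrix diagonal inequalities over each total-degree level to produce $S_{j+1}\le\rho^2 S_j$, and bounding $\|w|_{2t}\|_2$ by $\mathrm{Trace}\,M_t(w)$ through distinct, nonnegative product terms — are exactly what deliver the clean factor $1+\rho^2+\cdots+\rho^{2t}$; the rest is duality bookkeeping.

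For (i), feasibility of \reff{qmod:min<R,w>} is the lifting of (ii) applied to any finitely atomic $K$-measure admitted by $y$, which exists by Proposition~\ref{pro:atms:extn} since $y$ admits a $K$-measure. Next, $p=0$ is an interior point of the dual \reff{putsos:max<p,y>}: since $R$ is interior to $\Sig_{n,d}$ and $\Sig_{n,d}\subseteq Q_k(g_B)\subseteq I_{2k}(h)+Q_k(g_B)$, the polynomial $R$ lies in the interior of $(I_{2k}(h)+Q_k(g_B))\cap\re[x]_d$. By the strong duality statement recorded after \reff{sosrlx:mom}, the problem \reff{qmod:min<R,w>} then attains its minimum and the two optimal values agree. (Attainment also follows directly from (iii): the feasible set of \reff{qmod:min<R,w>} is closed, and any minimizing sequence is bounded by the estimate above, since its members satisfy $\langle R,\cdot\rangle$ bounded, $M_k(\cdot)\succeq 0$ and $L_b^{(k)}(\cdot)\succeq 0$.)
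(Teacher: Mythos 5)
Your proposal is correct and follows essentially the same route as the paper: lifting finitely atomic representing measures to establish feasibility and the relaxation bound in (ii), exhibiting $p=0$ as a Slater point of the dual for strong duality in (i), and in (iii) using the interiority of $R$ in $\Sig_{n,d}$ to bound $w_0$ and the constraint $L_b^{(k)}(w)\succeq 0$ to propagate that bound through $\mathrm{Trace}\,M_t(w)$. Your variants (a positive definite Gram matrix in place of $R-\eps\in\Sig_{n,d}$, level sums $S_j$ in place of $\mathscr{L}_w(\|x\|_2^{2t})$, and the distinct-pair product bound in place of $\|w|_{2t}\|_2\le\|M_t(w)\|_F\le\mathrm{Trace}\,M_t(w)$) are only cosmetic differences.
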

\begin{proof}
(i) Let $\mu$ be a finitely atomic $K$-representing measure for $y$,
which must exist by Proposition~\ref{pro:atms:extn}.
Then the tms $\int_K [x]_{2k} \mathtt{d}\mu$ is feasible for \reff{qmod:min<R,w>}.
So, \reff{qmod:min<R,w>} is feasible for all $k \geq d/2$.
Since $R \in int(\Sig_{n,d})$,
for all $p \in \re[x]_{\A}$ with tiny coefficients,
$R-p \in \Sig_{n,d} \subseteq I_{2k}(h) + Q_k(g_B)$.
This means that the zero polynomial $0$
is an interior point of \reff{putsos:max<p,y>}.
Hence, the strong duality holds, i.e., \reff{qmod:min<R,w>} has a minimizer,
and \reff{qmod:min<R,w>}-\reff{putsos:max<p,y>} has the same optimum value,
for all $k\geq d/2$ (cf. \cite[Section~2.4]{BTN}).

(ii) Since $y$ admits a $K$-measure, the feasible set of \reff{mom:min<R,z>}
is nonempty. Let $z$ be an arbitrary feasible point of \reff{mom:min<R,z>}.
Then $z$ admits a finitely atomic $K$-measure $\mu$, i.e., $z=\int_K [x]_d \mathtt{d}\mu$.
Let $w = \int_K [x]_{2k} \mathtt{d}\mu$. Clearly, $w|_{\A}=y$
and is feasible for \reff{qmod:min<R,w>},
and $\langle R, z \rangle = \langle R, w \rangle$.
That is, \reff{qmod:min<R,w>} is a relaxation of \reff{mom:min<R,z>}.
So, the optimal value of \reff{qmod:min<R,w>} is at most $R_{min}$.
This is also true for the optimal value of \reff{putsos:max<p,y>},
since it is not bigger than that of \reff{qmod:min<R,w>}.

(iii) Since $R \in int(\Sig_{n,d})$,
$R-\eps \in \Sig_{n,d}$ for some $\eps>0$. So, we have
\[
0 \leq \langle R - \eps, w \rangle =
\langle R, w \rangle - \eps \langle 1, w \rangle.
\]
(Cf.~\cite[Lemma~2.5]{Nie-FT}.) Since $w$ is a minimizer of \reff{qmod:min<R,w>},
item (ii) implies that
\[
w_0 = \langle 1, w \rangle \leq
\langle R, w \rangle /\eps \leq R_{min}/\eps.
\]
The membership $w \in \Phi_k(g_B)$ implies $L_b^{(k)}(w) \succeq 0$
($b=\rho^2-\|x\|_2^2$). So,
\[
\rho^2 \mathscr{L}_w(\|x\|_2^{2t-2}) - \mathscr{L}_w(\|x\|_2^{2t}) \geq 0,
\quad t = 0, 1, \ldots, k.
\]
(Cf.~\cite[Lemma~2.5]{Nie-FT}.)
A repeated application of the above gives
\[
\mathscr{L}_w(\|x\|_2^{2t}) \leq \rho^{2t} w_0,
\quad t = 0, 1, \ldots, k.
\]
Since $M_k(w) \succeq 0$, for each $t=0,1,\ldots,k$,
\[
\|w|_{2t}\|_2 \leq \|M_t(w)\|_F \leq Trace(M_t(w)) =
\sum_{i=0}^t \sum_{|\af| = i}\, w_{2\af},
\]
\[
\sum_{|\af| = i}\, w_{2\af} = \mathscr{L}_w(\sum_{|\af| = i} \,x^{2\af} )
 \leq \mathscr{L}_w(\|x\|_2^{2i}) \leq  \rho^{2i} w_0.
\]
Let $C=R_{min}/\eps$, then the inequality \reff{w2t<=rhoR} holds.
\end{proof}

In Step~0, the genericity means that $R$ is chosen in $\Sig_{n,d}\backslash \Theta$,
for a set $\Theta \subseteq \re^{\N_d^n}$ having zero Lebesgue measure.

\begin{prop}  \label{pr:alg:!min}
Let $K\subseteq \re^n$ be compact and $d > \deg(\A)$ be even.
Suppose $y\in \re^{\A}$ admits a $K$-measure.
If $R$ is generic in $\Sig_{n,d}$ (i.e., $R \in \Sig_{n,d}\backslash \Theta$,
for a subset $\Theta \subseteq \Sig_{n,d}$ having zero Lebesgue measure),
then we have:
\bit
\item [(i)] The problem \reff{mom:min<R,z>} has a unique minimizer.

\item [(ii)] If, for some $k$, a minimizer $w^{*,k}$ of \reff{qmod:min<R,w>}
has a flat truncation $w^{*,k}|_{2t}$ ($2t\geq d$),
then the measure admitted by $w^{*,k}|_{2t}$ is $r$-atomic with $r\leq |\A|$.

\eit
\end{prop}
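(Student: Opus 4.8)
The plan is to treat the two items separately, with item (i) being a genericity statement about linear optimization over a compact convex body and item (ii) being essentially a corollary of the earlier structural results. For item (i), recall from Lemma~\ref{lm:E-compact} that, since $y$ admits a $K$-measure and $K$ is compact, $\mc{E}_d(y,K)$ is a \emph{nonempty compact convex} subset of $\re^{\N_d^n}$; note we do not even need $\re[x]_{\A}$ to be $K$-full here, because for $R\in\Sig_{n,d}\subseteq\mc{P}_d(K)$ the objective $\langle R,\cdot\rangle$ is bounded below on $\mathscr{R}_d(K)$ (it is nonnegative there) so the feasible set of \reff{mom:min<R,z>} can be intersected with a bounded sublevel set; but in fact the cleanest route is just to invoke compactness of $\mc{E}_d(y,K)$. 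First I would observe that minimizing a linear functional $\langle R,\cdot\rangle$ over a fixed compact convex set $\mc{E}_d(y,K)$ has a unique minimizer for all $R$ outside a set of measure zero. This is a standard fact (cf.~\cite[Theorem~2.2.4]{Sneid}, already cited in Section~\ref{sec:sdalg}): the set of linear functionals attaining their minimum on a face of dimension $\ge 1$ is contained in a countable union of lower-dimensional linear subspaces (the normal cones of positive-dimensional faces are contained in hyperplanes), hence has Lebesgue measure zero in $\re^{\N_d^n}$. Restricting this null set to $\Sig_{n,d}$ (a full-dimensional cone) still yields a set of zero Lebesgue measure \emph{within} $\Sig_{n,d}$, which is the required $\Theta$.

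For item (ii), suppose $w^{*,k}$ is a minimizer of \reff{qmod:min<R,w>} and $z:=w^{*,k}|_{2t}$ is flat with $2t\ge d$. By Theorem~\ref{thm-CF:fec}, $z$ admits a unique $K$-measure $\mu$, which is $\rank M_t(z)$-atomic; I want to show $\rank M_t(z)\le|\A|$. The key point is that $z$ lies in $\mc{E}_d(y,K)$ (it is a truncation to degree $d$ of the degree-$2t$ tms $w^{*,k}|_{2t}$, it restricts to $y$ on $\A$ since $2t\ge d>\deg(\A)$, and it admits the $K$-measure $\mu$), \emph{and} that $z|_d$ is the unique minimizer of \reff{mom:min<R,z>}. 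The second assertion needs a short argument: $w^{*,k}$ minimizes $\langle R,\cdot\rangle$ over the feasible set of \reff{qmod:min<R,w>}, which contains $\int_K[x]_{2k}\,\mt{d}\nu$ for every $\nu\in meas(z',K)$ with $z'\in\mc{E}_d(y,K)$; since $\langle R,\int_K[x]_{2k}\,\mt{d}\nu\rangle=\langle R,z'\rangle$ for such $\nu$, the relaxation value equals $R_{min}$ by Proposition~\ref{pro:sdprlx}(ii) and the fact that it is \emph{attained} at $w^{*,k}$, whence $\langle R,z|_d\rangle=\langle R,w^{*,k}\rangle=R_{min}$, so $z|_d$ is a minimizer of \reff{mom:min<R,z>}; by item (i) it is the unique minimizer, hence an extreme point of $\mc{E}_d(y,K)$. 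Now Lemma~\ref{lm:extrm=>fin} applies to the extreme point $z|_d$ and its representing measure $\mu$: $\mu$ is $r$-atomic with $r\le|\A|$. Since $\mu$ is the measure admitted by $z=w^{*,k}|_{2t}$, this is exactly the claim.

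I expect the main obstacle to be the bookkeeping in item (ii) connecting the optimal value of the relaxation \reff{qmod:min<R,w>} to that of the true problem \reff{mom:min<R,z>} so as to conclude that $z|_d$ is genuinely the minimizer of \reff{mom:min<R,z>}, rather than merely feasible; one must be careful that $z|_d$ being a truncation of a \emph{relaxation} minimizer does not a priori force optimality in \reff{mom:min<R,z>}, and the argument really uses Proposition~\ref{pro:sdprlx}(ii) together with the observation that every feasible $z'$ of \reff{mom:min<R,z>} lifts to a feasible point of \reff{qmod:min<R,w>} of the same objective value. Everything else — the genericity count for item (i), and the invocation of Lemma~\ref{lm:extrm=>fin} and Theorem~\ref{thm-CF:fec} — is routine given the results already established in the excerpt.
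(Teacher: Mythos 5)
Your item (ii) is essentially the paper's own argument (show $w^{*,k}|_d$ is feasible and has objective value $\leq R_{min}$ by Proposition~\ref{pro:sdprlx}(ii), hence is the unique minimizer of \reff{mom:min<R,z>} by item (i), hence an extreme point of $\mc{E}_d(y,K)$, then apply Lemma~\ref{lm:extrm=>fin}), and it is correct.

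Item (i), however, has a genuine gap. Your primary route invokes Lemma~\ref{lm:E-compact} to get compactness of $\mc{E}_d(y,K)$, but that lemma assumes $\re[x]_{\A}$ is $K$-full, and Proposition~\ref{pr:alg:!min} does not (this generality is the point of Remark~\ref{rmk:K-ful}). Without $K$-fullness, $\mc{E}_d(y,K)$ can be unbounded: e.g.\ $n=1$, $\A=\{1\}$, $K=[-1,1]$, where adding mass at the origin inflates $z_0$ without changing $z|_{\A}$. Your fallback is also insufficient as stated on two counts. First, nonnegativity of $\langle R,\cdot\rangle$ on $\mathscr{R}_d(K)$ does not make sublevel sets bounded; you need $R>0$ on $K$ (equivalently $R\in int(\mc{P}_d(K))$, or $R\in int(\Sig_{n,d})$), which is generic but must be said, since only then does $\langle R,z\rangle\geq \eps z_0$ bound $z_0$ and hence all of $z$. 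Second, and more importantly, the resulting compact set depends on $R$ (through $\eps=\eps(R)$ and through the objective value of a fixed feasible point, which scales with $\|R\|$), whereas Schneider's theorem \cite[Theorem~2.2.4]{Sneid} produces a null set of singular normal directions for a \emph{fixed} convex body. You cannot vary the body with the functional and still collect a single null set $\Theta$. The paper closes exactly this hole by writing $int(\Sig_{n,d})=\bigcup_{\ell\geq 1} int(S_\ell)$ with $S_\ell=\{R:\,R-1/\ell\in\Sig_{n,d},\ \|R\|_2\leq\ell\}$, obtaining a single compact convex feasible set $F$ valid for all $R\in S_\ell$, applying Schneider's theorem on each $F$, and taking a countable union of null sets. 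Some such uniformization step is needed for your argument to go through.
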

\begin{proof}
The boundary of $\Sig_{n,d}$ has zero
Lebesgue measure in the space $\re^{\N_d^n}$.
It is enough to prove the items (i) and (ii)
if $R$ is generic in the interior of $\Sig_{n,d}$. Let
\[
S_\ell:=\{ R \in \Sig_{n,d}: \,  R - 1/\ell \in \Sig_{n,d}, \|R\|_2 \leq \ell \}
\quad (\ell = 1, 2, \ldots ).
\]
Clearly, $int(\Sig_{n,d}) = \bigcup_{\ell \geq 1} int(S_\ell)$.
It is sufficient to prove that for each
$\ell =1, 2, 3, \ldots$, if $R$ is generic in
$S_\ell$, then the items (i) and (ii) are true.

(i) For every $R \in S_\ell$, it holds that for all $x\in \re^n$
\[
1/\ell  \leq  R(x) \leq \ell \|[x]_d\|_2.
\]
Choose a finitely atomic measure $\nu^* \in meas(y,K)$. Let
$
M_1 := \int_K  \ell \|[x]_d\|_2 \, \mathtt{d}\nu^*,
$
a constant independent of $R$.
Clearly, $\int_K [x]_d \mathtt{d}\nu^*$ is feasible in \reff{mom:min<R,z>}, and
\[
R_{min} \leq \int_K  R  \mathtt{d}\nu^*  \leq  M_1.
\]
For all $R \in S_\ell$, \reff{mom:min<R,z>} has a minimizer $z^*$
(cf.~Proposition~\ref{pr:optimizer}(i)), and $z^*$ satisfies
\[
(z^*)_0/\ell  \leq  \langle R, z^* \rangle  = R_{min}  \leq  M_1.
\]
Note that $z^*$ is feasible for \reff{qmod:min<R,w>} with order $k=d/2$.
As in the proof of Proposition~\ref{pro:sdprlx}(iii), we can get
\[
\|z^*\|_2 \leq M_2:=(1+\rho^2+\cdots + \rho^d) \ell M_1.
\]
Thus, for all $R \in S_\ell$, \reff{mom:min<R,z>} is equivalent to
\be  \label{min<R,z>:z<=M}
\min_{z} \quad  \langle R, z \rangle  \quad
s.t. \quad z|_{\A} = y, \, \|z\|_2 \leq M_2, \, z \in \mathscr{R}_d(K).
\ee
The feasible set of \reff{min<R,z>:z<=M}, denoted by $F$,
is a nonempty compact convex set.
A linear functional $\langle R, z \rangle$
has a unique minimizer on $F$ if and only if
$R$ is a regular normal vector of $F$,
or equivalently, $\langle R, z \rangle$
has more than one minimizer on $F$ if and only if
$R$ is a singular normal vector of $F$ (cf. Schneider~\cite[Section~2.2]{Sneid}).
Let $\Theta$ be the set of all singular normal vectors of $F$.
Then $\Theta$ has zero Lebesgue measure in the space $\re^{\N_d^n}$
(cf. Schneider~\cite[Theorem~2.2.4]{Sneid}).
If $R \in S_\ell\backslash\Theta$,
then \reff{min<R,z>:z<=M}, as well as \reff{mom:min<R,z>},
has a unique minimizer. So, if $R$ is generic in $S_\ell$,
then \reff{mom:min<R,z>} has a unique minimizer.

(ii) Since $w^{*,k}|_{2t}$ is flat, it admits a finitely atomic $K$-measure,
and so does $w^{*,k}|_d$ when $2t \geq d$.
Thus, $w^{*,k}|_d$ is feasible in \reff{mom:min<R,z>},
and $\langle R, w^{*,k}|_d \rangle \geq R_{min}$.
By Proposition~\ref{pro:sdprlx}(ii),
we know $\langle R, w^{*,k}|_d  \rangle  = \langle R, w^{*,k} \rangle  \leq R_{min}$.
Hence, $\langle R, w^{*,k}|_d  \rangle  = R_{min}$
and $w^*|_d$ is a minimizer of \reff{mom:min<R,z>}.
By item (i), if $R$ is generic in $\Sig_{n,d}$,
then \reff{mom:min<R,z>} has a unique minimizer.
Therefore, for a generic $R \in \Sig_{n,d}$,
$w^*|_d$ is the unique minimizer of \reff{mom:min<R,z>}
and is an extreme point of $\mc{E}_d(y,K)$.
By Lemma~\ref{lm:extrm=>fin},
every measure admitted by $w^{*,k}|_{d}$ must be $r$-atomic with $r\leq |\A|$.
Clearly, every measure admitted by $w^{*,k}|_{2t}$
is also admitted by $w^{*,k}|_{d}$,
and thus must be $r$-atomic with $r\leq |\A|$.
\end{proof}

\subsection{Asymptotic convergence}

Our main result in this subsection is:

\begin{theorem}  \label{cvgthm:asymp}
Let $d> \deg(\A)$ be even and $K\subseteq B(0,\rho)$ be as in\reff{def:K}.
Suppose $y \in \re^{\A}$ admits a $K$-measure.
In Algorithm~\ref{sdpalg:A-tkmp}, if $R$ is generic in $\Sig_{n,d}$,
then \reff{qmod:min<R,w>} has an optimizer $w^{*,k}$ for all $k \geq d/2$
and we have:

\bit

\item [(i)] For all $t$ big enough, the sequence $\{w^{*,k}|_{2t}\}$
is bounded and all its accumulation points are flat.
Moreover, each of its accumulation points
admits a $r$-atomic measure with $r\leq |\A|$.

\item [(ii)] In item (i), if, in addition, $d$ is also big enough,
then the sequence $\{w^{*,k}|_{2t}\}$ converges to a flat tms.

\eit
\end{theorem}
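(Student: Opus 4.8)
The plan is to run a diagonal subsequence argument on the minimizers $w^{*,k}$ of \reff{qmod:min<R,w>}, recognize the limit as an extension of the unique minimizer $z^*$ of \reff{mom:min<R,z>}, and then extract flatness from Lemma~\ref{lm:extrm=>fin} with a flatness threshold that does not depend on the accumulation point. First I would fix a convenient genericity: the topological boundary of $\Sig_{n,d}$ is Lebesgue-null in $\re^{\N_d^n}$, so a generic $R\in\Sig_{n,d}$ lies in $int(\Sig_{n,d})$, and then Proposition~\ref{pro:sdprlx}(i) guarantees that \reff{qmod:min<R,w>} has a minimizer $w^{*,k}$ for every $k\geq d/2$. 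Proposition~\ref{pro:sdprlx}(iii) gives $\|w^{*,k}|_{2t}\|_2\leq(1+\rho^2+\cdots+\rho^{2t})C$ for all $k\geq t$, with $C=C(R)$ independent of $k$, so for each fixed $t$ the sequence $\{w^{*,k}|_{2t}\}_{k\geq t}$ is bounded; this is the boundedness assertion in (i). Finally, after enlarging the null exceptional set if needed, Proposition~\ref{pr:alg:!min}(i) tells us that for generic $R$ the problem \reff{mom:min<R,z>} has a unique minimizer $z^*$, which is an extreme point of $\mc{E}_d(y,K)$.

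Now fix $t$ large and let $w$ be any accumulation point of $\{w^{*,k}|_{2t}\}$. Choosing a subsequence realizing $w$ and then, by a standard diagonal extraction, a further subsequence $(k_i)$ along which $w^{*,k_i}$ converges entrywise, I obtain an infinite multisequence $w^*\in\re^{\N^n}$ with $w^*|_{2t}=w$, $w^*|_{\A}=y$, and, passing to the limit in the constraints of \reff{qmod:min<R,w>}, with $L_{h_i}^{(k)}(w^*)=0$ and $L_{g_j}^{(k)}(w^*),L_b^{(k)}(w^*)\succeq0$ for every $k$ and all $i,j$. Since $g_B$ contains $b=\rho^2-\|x\|_2^2$, the archimedean condition holds for $(h,g_B)$, so for any $p\in\re[x]$ with $p|_K\geq0$ and any $\veps>0$ Theorem~\ref{thm:PutThm} gives $p+\veps\in I(h)+Q(g_B)=\bigcup_k\big(I_{2k}(h)+Q_k(g_B)\big)$, whence $\mathscr{L}_{w^*}(p+\veps)\geq0$; letting $\veps\to0$ shows $\mathscr{L}_{w^*|_{2s}}$ is $K$-positive for every $s$. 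By Theorem~\ref{thm:Tchak} each $w^*|_{2s}$ admits a $K$-measure, so $w^*|_d\in\mathscr{R}_d(K)$ and, since $w^*|_{\A}=y$, it is feasible for \reff{mom:min<R,z>}. Because $\langle R,w^{*,k_i}\rangle=\langle R,w^{*,k_i}|_d\rangle\to\langle R,w^*|_d\rangle$, while $\langle R,w^{*,k_i}\rangle\leq R_{min}$ by Proposition~\ref{pro:sdprlx}(ii) and feasibility forces $\langle R,w^*|_d\rangle\geq R_{min}$, we conclude $\langle R,w^*|_d\rangle=R_{min}$, hence $w^*|_d=z^*$ by uniqueness.

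To finish (i) I would harvest flatness. Let $\nu$ be a $K$-representing measure of $w^*|_{2t}$; truncating its moments to degree $d$ ($\leq 2t$) gives $\nu\in meas(z^*,K)$, so Lemma~\ref{lm:extrm=>fin} shows $\nu$ is $r$-atomic with $r\leq|\A|$. Since $r$ distinct points of $\re^n$ impose independent conditions on polynomials of degree $r-1$ (for each atom take a product of at most $r-1$ affine forms vanishing at the other atoms but not at it), one has $\rank M_s(\nu)=r$ for all $s\geq r-1$. Hence, once $t$ exceeds a bound $t_0$ that depends only on $|\A|$, $d_K$, $\deg(\A)$, $d$ — e.g.\ $t_0=\max\{d/2,\ |\A|-1+d_K,\ \lceil\deg(\A)/2\rceil\}$ — the tms $w^*|_{2t}=\int_K[x]_{2t}\,\mathtt{d}\nu$ satisfies \reff{loc-M>=0} and $\rank M_{t-d_K}(w^*|_{2t})=\rank M_t(w^*|_{2t})=r$, i.e.\ \reff{cond:FEC} holds and $w^*|_{2t}$ is flat. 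As $t_0$ is independent of the accumulation point $w$, this shows that for every $t\geq t_0$ \emph{all} accumulation points of $\{w^{*,k}|_{2t}\}$ are flat, and by Theorem~\ref{thm-CF:fec} each admits a unique $K$-measure, which is $r$-atomic with $r\leq|\A|$; together with boundedness this proves (i).

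For (ii), assume moreover $d\geq2t_0$. Applying the rank computation of the previous paragraph to a representing measure $\mu^*$ of $z^*$ (which is $r$-atomic with $r\leq|\A|$ by Lemma~\ref{lm:extrm=>fin}) shows $z^*$ is itself flat, so by Theorem~\ref{thm-CF:fec} $\mu^*$ is its \emph{unique} $K$-measure. Then any infinite $w^*$ arising as above from an accumulation point of $\{w^{*,k}\}$ has $w^*|_d=z^*$ and admits a $K$-measure whose degree-$d$ truncation represents $z^*$; hence that measure is $\mu^*$ and $w^*$ is the full moment sequence of $\mu^*$, in particular uniquely determined. Therefore $\{w^{*,k}|_{2t}\}$ is bounded with the single accumulation point $\int_K[x]_{2t}\,\mathtt{d}\mu^*$ and thus converges to it, and that limit is flat by (i). The step I expect to be the main obstacle is precisely the construction in the second paragraph and the uniformity used in (i): one must promote an accumulation point of the degree-$2t$ truncations to a full multisequence that provably admits a $K$-measure — where the archimedean property of $g_B$ and Theorems~\ref{thm:PutThm} and~\ref{thm:Tchak} are essential — and must bound the flatness threshold $t_0$ purely in terms of $|\A|$, so that ``all accumulation points are flat'' holds on a single common range of $t$; the rest is bookkeeping combining Propositions~\ref{pro:sdprlx} and~\ref{pr:alg:!min} with Theorem~\ref{thm-CF:fec}.
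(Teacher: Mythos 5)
Your proof is correct and follows the same overall architecture as the paper's argument: boundedness from Proposition~\ref{pro:sdprlx}(iii), lifting an accumulation point to a full multisequence that admits a $K$-measure, identifying its degree-$d$ truncation with the unique minimizer of \reff{mom:min<R,z>} via Proposition~\ref{pr:alg:!min}(i), and then harvesting $r\le|\A|$ and flatness from Lemma~\ref{lm:extrm=>fin}. However, you implement three of the key steps with genuinely different devices. Where the paper embeds the $w^{*,k}$ into the Hilbert space $\mathscr{H}(\re^{\N_{\infty}^n})$ and extracts a weak-$\ast$ convergent subsequence via Alaoglu's theorem (which forces the preliminary rescaling to $\rho<1$ so that $\|w^{*,k}\|_2\le C/(1-\rho)$), you use a diagonal extraction of an entrywise convergent subsequence, which needs only the degree-by-degree bounds of \reff{w2t<=rhoR} and dispenses with the scaling digression entirely. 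Where the paper cites Putinar's Lemma~3.2 to solve the full $K$-moment problem for the limit $w^*$, you combine Theorem~\ref{thm:PutThm} with Theorem~\ref{thm:Tchak} to represent each finite truncation $w^*|_{2s}$, which suffices for everything that follows. And where the paper derives flatness from a rank-stabilization chain (locating $\ell\le|\A|$ with $\rank\,M_{(\ell-1)d_K}(\omega)=\rank\,M_{\ell d_K}(\omega)$ and then squeezing the intermediate ranks using uniqueness of the representing measure), you compute $\rank\,M_s(\nu)=r$ directly for all $s\ge r-1$ from the linear independence of $[u_1]_s,\ldots,[u_r]_s$ at distinct atoms, which yields the slightly sharper and more transparent threshold $t\ge|\A|-1+d_K$ in place of $t\ge|\A|d_K$. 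Each substitution is sound; the interpolation argument is a clean alternative to the rank chain, and the diagonal extraction removes a minor but genuine technicality from the published proof.
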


\begin{proof}
Since $R$ is generic in $\Sig_{n,d}$, we can assume $R \in int(\Sig_{n,d})$.
By Proposition~\ref{pro:sdprlx}(i),
\reff{qmod:min<R,w>} has an optimizer $w^{*,k}$ for every $k \geq d/2$.

(i) By Proposition~\ref{pro:sdprlx}(iii),
there is a constant $C=C(R)$ such that
\[
\|w^{*,k}|_{2t}\|_2 \leq  (1+\rho^2+\cdots+\rho^{2t}) C
\quad \mbox{ for all } \quad t =0,1,\ldots, k.
\]
So, the sequence $\{w^{*,k}|_{2t}\}$ is bounded, for any fixed $t$.
Let $\omega$ be an accumulation point of $\{w^{*,k}|_{2t}\}$.
We can generally further assume $w^{*,k}|_{2t} \to \omega$ as $k\to\infty$.

Without loss of generality, we can assume $\rho <1$.
\Big(If otherwise $\rho \geq 1$, we can do as follows.
Apply the scaling transformation
$x = (\rho+1) \tilde{x}$. Let $\tilde{K} =
\{ \tilde{x} \in \re^n: (\rho+1) \tilde{x} \in K\}$.
Then $\tilde{K} \subseteq B(0, \tilde{\rho})$ with
$\tilde{\rho} := \rho/(1+\rho)<1$. Define
the polynomial tuples $\tilde{h}, \tilde{g}_B$ in $\tilde{x}$ such that
$\tilde{h}(\tilde{x}) = h((\rho+1)\tilde{x})$,
$\tilde{g}_B(\tilde{x}) = g_B((\rho+1)\tilde{x})$.
For $w \in \re^{\N_{2k}^n}$ (resp., $y \in \re^{\A}$),
let $\tilde{w} \in \re^{\N_{2k}^n}$ (resp., $\tilde{y} \in \re^{\A}$)
be its scaling such that $w_\af = (\rho+1)^{|\af|}\tilde{w}_\af$
$\forall \af \in \N_{2k}^n$
(resp., $y_\af  = (\rho+1)^{|\af|}\tilde{y}_\af$ $\forall \af \in \A$).
Note that $y$ admits a $K$-measure if and only if
$\tilde{y}$ admits a $\tilde{K}$-measure.
Let $\tilde{R}\in \re[\tilde{x}]$ be such that
$\tilde{R}(\tilde{x}) = R((\rho+1)\tilde{x})$.
Then, $\langle R, w \rangle =
\langle \tilde{R}, \tilde{w}\rangle$, and
$w$ is feasible for \reff{qmod:min<R,w>} if and only if
its scaling $\tilde{w}$ is feasible for the scaled problem
\[
\min_{ \tilde{w} }
\quad  \langle \tilde{R}, \tilde{w} \rangle \quad s.t.  \quad
\tilde{w}|_{\A} = \tilde{y}, \, \tilde{w} \in
\Phi_k(\tilde{g}_B) \cap E_k(\tilde{h}).
\]
Let $\tilde{w}^{*,k}$ be the scaling of $w^{*,k}$, as in the above.
So, $w^{*,k}$ is an optimizer of \reff{qmod:min<R,w>}
if and only if $\tilde{w}^{*,k}$ is an optimizer of
the above scaled optimization problem.
The two sequences $\{w^{*,k}|_{2t}\},\{\tilde{w}^{*,k}|_{2t}\}
\subseteq \re^{\N_{2t}^n}$ are scaled from each other,
so they have same convergence properties. Therefore, the proof
for the case $\rho \geq 1$ can be equivalently reduced
to the case $\tilde{\rho} < 1$,
by applying the scaling procedure as above.\Big)

First, we prove that the truncation $\omega|_d$ is a minimizer of \reff{mom:min<R,z>}.
Let $\re^{\N_{\infty}^n}$ be the space of all vectors
indexed by $\af \in \N^n$. For vectors $u,v \in \re^{\N_{\infty}^n}$,
define their inner product as
\[
\langle u, v \rangle := \sum_{\af \in \N^n } u_\af v_\af.
\]
Let $\mathscr{H}(\re^{\N_{\infty}^n})$ be the Hilbert space
of vectors in $\re^{\N_{\infty}^n}$ whose norms are finite,
under the above inner product.
Each $w^{*,k}$ can be thought of as a vector in
$\mathscr{H}(\re^{\N_{\infty}^n})$ by adding zero entries to the tailing.
In the above, we have shown that for all $k$
\[
\| w^{*,k} \|_2 = \| w^{*,k}|_{2k} \|_2  \leq
(1 + \rho^2 + \cdots + \rho^{2k} ) C \leq C/(1-\rho).
\]
The sequence $\{ w^{*,k} \}$ is bounded in the Hilbert space $\mathscr{H}(\re^{\N_{\infty}^n})$.
So, it has a subsequence $\{w^{*,k_j} \}$ that is convergent in the weak-$\ast$ topology,
i.e., there exists $w^* \in \mathscr{H}(\re^{\N_{\infty}^n})$ such that
\[
\langle c, w^{*,k_j} \rangle \, \to \,  \langle c,  w^* \rangle
 \quad \mbox{ as } j \to \infty, \quad
\forall \, c \in \mathscr{H}(\re^{\N_{\infty}^n}).
\]
This fact can also be implied by Alaoglu's Theorem (cf.~\cite[Theorem~V.3.1]{Conway}
or \cite[Theorem~C.18]{LasBok}).
If we choose $c$ as $\langle c, w \rangle= w_\af$ for each $\af$, then
\be \label{2k2t->w*2t}
(w^{*,k_j})_{\af} \, \to  \, (w^*)_{\af}.
\ee
Since $w^{*,k}|_{2t} \to \omega$, the above implies $w^*|_{2t} = \omega$.
In particular, $w^*|_\A = y$. So, if $y \ne 0$, then $w^*$ cannot be a zero vector.
Note that
$
w^{*,k_j} \in \Phi_{k_j}(g_B) \cap E_{k_j}(h)
$
for all $k_j$. For each $r=1,2,\ldots$, if $k_j \geq 2r$, then
\[
L_{h_i}^{(r)}(w^{(k_j)}) = 0\,(1\leq i \leq m_1), \quad
L_{g_i}^{(r)}(w^{(k_j)})\succeq 0 \,(0\leq i \leq m_2).
\]
Hence, \reff{2k2t->w*2t} implies that for all $r=1,2,\ldots$
\[
L_{h_i}^{(r)}(w^*) = 0\,(1\leq i \leq m_1), \quad
L_{g_i}^{(r)}(w^*)\succeq 0 \,(0\leq i \leq m_2).
\]
This means that $w^*$ is a full multisequence
whose localizing matrices of all orders are positive semidefinite.
By Lemma~3.2 of Putinar \cite{Put}, $w^*$ admits a $K$-measure.
So, the truncation $\omega|_d = w^*|_d$ is feasible for \reff{mom:min<R,z>}, and
\[
R_{min} \leq \langle R, w^*|_d\rangle = \langle R, w^*\rangle
= \langle R, \omega|_d \rangle.
\]
By Proposition~\ref{pro:sdprlx}(ii), we know
$
\langle R, w^{*,k} \rangle \leq R_{min}
$
for all $k$. Thus,
\[
\langle R, w^* \rangle = \lim_{j\to\infty}
\langle R, w^{*,k_j} \rangle \leq R_{min}.
\]
Hence, $\langle R, \omega|_d \rangle = R_{min}$,
and $\omega|_d$ is a minimizer of \reff{mom:min<R,z>}.

Second, we prove that if $t \geq |\A|d_K$ then
the truncation $\omega|_{2t}$ is flat.
By Proposition~\ref{pr:alg:!min}, if $R$ is generic in $\Sig_{n,d}$,
then \reff{mom:min<R,z>} has a unique minimizer,
which must be $\omega|_d$, by the above.
So, $\omega|_d$ is an extreme point of $\mc{E}_d(y,K)$\footnote
{Suppose otherwise $\omega|_d$ is not an extreme point of $\mc{E}_d(y,K)$,
say, $\omega|_d = \lmd \omega^{(1)} + (1-\lmd) \omega^{(2)}$
with $0 < \lmd < 1$, $\omega|_d \ne \omega^{(1)}, \omega^{(2)} \in \mc{E}_d(y,K)$.
Clearly, $\langle R, \omega|_d \rangle =
\lmd \langle R, \omega^{(1)} \rangle + (1-\lmd) \langle R, \omega^{(2)} \rangle$.
Note that $\langle R, \omega^{(1)} \rangle, \langle R, \omega^{(2)} \rangle
\geq  \langle R, \omega|_d  \rangle$,
because $\omega|_d$ is a minimizer of \reff{mom:min<R,z>}.
So, $\langle R, \omega|_d \rangle =
 \langle R, \omega^{(1)} \rangle = \langle R, \omega^{(2)} \rangle$.
That is, $\omega^{(1)}, \omega^{(2)}$
are both minimizers of \reff{mom:min<R,z>},
which are different from $\omega|_d$.
However, this is a contradiction because \reff{mom:min<R,z>}
has a unique minimizer that is $\omega|_d$.
Hence, $\omega|_d$ must be an extreme point of $\mc{E}_d(y,K)$.},
which is the feasible set of \reff{mom:min<R,z>}.
Let $\mu^*$ be a $K$-representing measure for $\omega$,
which must exist because $\omega = w^*|_{2t}$.
Then $\mu^* \in meas(\omega|_d,K)$.
By Lemma~\ref{lm:extrm=>fin}, $\mu^*$ must be
finitely atomic and $|\supp{\mu^*}|\leq |\A|$.
Note that
\[
\rank\, M_0(\omega) \leq \rank\, M_{d_K}(\omega) \leq
\cdots \leq \rank\, M_{d_K|\A|}(\omega) \leq \cdots,
\]
\[
\rank\, M_i(\omega) \leq |\supp{\mu^*}|\leq |\A|
\quad (i=1,2,\ldots).
\]
There must exist $\ell \leq |\A|$ such that
\[
\rank\, M_{ (\ell-1)d_K }(\omega)  = \rank\, M_{ \ell d_K }(\omega).
\]
So, the truncation $\omega|_{2\ell d_K}$ is flat.
Clearly, $\omega$ is an extension of $\omega|_{2\ell d_K}$,
and every measure admitted by $\omega$
is also a representing measure for $\omega|_{2\ell d_K}$. By Theorem~\ref{thm-CF:fec},
$\omega|_{2\ell d_K}$ admits a unique $K$-representing measure,
which is $r$-atomic with $r=\rank\, M_{ \ell d_K }(\omega)$.
Because $\mu^* \in meas(\omega|_{2\ell d_K},K)$, $\mu^*$ is the such unique measure,
and it is $r$-atomic. From the above, we can get
\[
\baray{c}
r = |\supp{\mu^*}| = \rank\, M_{ (\ell-1) d_K }(\omega)
= \rank\, M_{ \ell d_K }(\omega) \leq \\
\rank\, M_{ \ell d_K +1 }(\omega) \leq \cdots \leq
\rank\, M_t(\omega) \leq |\supp{\mu^*}|=r.
\earay
\]
So, we must have $\rank\, M_{t-d_K}(\omega)=\rank\, M_t(\omega)$,
i.e., $\omega$ is flat.

Third, in the above, we have indeed shown that
if $\mu^*$ is a $K$-representing measure for any accumulation point $\omega$,
then $\mu^*$ is $r$-atomic with $r\leq |\A|$.

(ii) It is enough to show that if $t \geq d/2 \geq |\A| d_K$
then $\{w^{*,k}|_{2t}\}$ has a unique accumulation point.
We continue the proof of the item (i). By Proposition~\ref{pr:alg:!min},
if $R$ is generic in $\Sig_{n,d}$,
then \reff{mom:min<R,z>} has a unique minimizer, say, $z^*$.
Let $\omega$ be an arbitrary accumulation point of $\{w^{*,k}|_{2t}\}$.
In  the proof of (i),
we showed that $\omega|_d$ is a minimizer of \reff{mom:min<R,z>}
So, $z^*=\omega|_d$. We also showed that $\omega|_{2t}$
is flat for all $t \geq |\mc{A}| d_K$, in the proof of (i).
Thus, if $d \geq 2|\A| d_K$, then $z^* = \omega|_d$ is flat.
By Theorem~\ref{thm-CF:fec}, $z^*$ admits a unique $K$-representing measure,
say, $meas(z^*, K) = \{\nu^*\}$.
Since $z^*$ is a truncation of $\omega$, $meas(\omega,K) \subseteq meas(z^*, K)$.
As shown in the proof of (i), we know $\omega$ is flat, so
$meas(\omega,K) \ne \emptyset$.
%
%
Hence, we must have $meas(\omega, K) = \{\nu^*\}$
and $\omega = \int_K [x]_{2t} \mathtt{d}\nu^*$.
This shows that $\{w^{*,k}|_{2t}\}$ has a unique accumulation point,
which is $\int_K [x]_{2t} \mathtt{d}\nu^*$.
\end{proof}

Algorithm~\ref{sdpalg:A-tkmp} is guaranteed to converge with {\it probability one},
for {\it all} $\A$-tms $y$ that admits a $K$-measure.
If, accidently, a bad $R$ is generated such that
Algorithm~\ref{sdpalg:A-tkmp} fails to converge,
we can choose a different generic $R \in \Sig_{n,d}$.
Indeed, this never happened in our numerical experiments.
Theorem~\ref{cvgthm:asymp} guarantees that
we {\it almost always} succeed by choosing $R$ from $\Sig_{n,d}$.

\subsection{Finite convergence}

Now we characterize when Algorithm~\ref{sdpalg:A-tkmp} has finite convergence.
Denote $g_{m_2+1} := \rho-\|x\|_2^2$, then $g_B=(g_1,\ldots,g_{m_2}, g_{m_2+1})$.
For an index set $J=\{j_1,\ldots, j_r\}$, denote
$g_J :=(g_{j_1},\ldots,g_{j_r})$.

For a polynomial $f$,
denote by $f^{hom}$ the homogeneous part of $f$ with the highest degree.
If $p=(p_1,\ldots,p_r)$ is tuple,
denote $p^{hom} := (p_1^{hom},\ldots,p_r^{hom})$.
We denote by $Jac(p)|_{u}$ the Jacobian of $p$
evaluated at the point $u$.
The discriminant of $p^{hom}$, denoted as $\Delta(p^{hom})$,
is a polynomial in the coefficients of $p^{hom}$,
such that $\Delta(p^{hom})=0$ if and only if
$p^{hom}(x)=0$ has a nonzero solution $u\in \cpx^n$ with $\rank\, Jac(h^{hom})|_{u} < r$.
We refer to \cite[Section~3]{Nie-dis} for discriminants.

\begin{ass} \label{ass:gB-nsig}
For any $J \subseteq [m_2+1]$ with $V_{\re}(h, g_J) \ne \emptyset$,
the coefficients of $h$ and $g_B$ satisfy $\Delta(h^{hom}, g_J^{hom})\ne 0$.
\end{ass}

Assumption~\ref{ass:gB-nsig} holds generically,
because it requires $(h^{hom},g^{hom})$ to lie in an open dense set.
The finite convergence of Algorithm~\ref{sdpalg:A-tkmp} is as follows.

\begin{theorem} \label{thm:fin-cvg}
Let $K \subseteq B(0,\rho)$ be as in \reff{def:K}, and $y$ be an $\A$-tms.
Let $w^{*,k}$ be an optimizer of \reff{qmod:min<R,w>} with order $k$, if it exists.

\bit

\item [(i)] Suppose $\re[x]_{\A}$ is $K$-full.
If $y$ admits no $K$-measures,
then \reff{qmod:min<R,w>} is infeasible for all $k$ big enough.

\item [(ii)] Suppose $meas(y,K)\ne \emptyset$ and Assumption~\ref{ass:gB-nsig} holds.
If \reff{psd:max<p,y>} has a maximizer $p^*$ with
$R-p^* \in I_{2k_1}(h)+Q_{k_1}(g_B)$ for some $k_1$,
and if $R$ is generic in $\Sig_{n,d}$,
then $w^{*,k}|_{2t}$ is flat for some $t\geq d/2$, for $k$ big enough.

\item [(iii)] Suppose \reff{putsos:max<p,y>} achieves its maximum for $k$ big enough.
If the truncation $w^{*,k_2}|_{2t}$ is flat
for some $k_2$ and $t\geq d/2$, and if $R \in int(\Sig_{n,d})$,
then \reff{psd:max<p,y>} has a maximizer $p^*$ with
$R-p^* \in I_{2k_3}(h)+Q_{k_3}(g_B)$ for some $k_3$.

\eit
\end{theorem}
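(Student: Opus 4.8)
I would prove the three items separately, in the order (i), (iii), (ii), since only (ii) requires real work and it is natural to keep it for last.

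\textbf{Item (i).} The plan is to manufacture one polynomial certifying infeasibility. From Corollary~\ref{cor:ctf-nomeas} there is $p\in\re[x]_{\A}$ with $p|_K\ge 0$ and $\mathscr L_y(p)<0$; since $\re[x]_{\A}$ is $K$-full, pick $q\in\re[x]_{\A}$ with $q|_K>0$ and set $\hat p:=p+\eps q$, which for small $\eps>0$ satisfies $\hat p|_K>0$ and $\mathscr L_y(\hat p)<0$ by compactness of $K$. As $K\subseteq B(0,\rho)$, the pair $(h,g_B)$ is archimedean, so Putinar's theorem (Theorem~\ref{thm:PutThm}) gives $\hat p\in I_{2k_0}(h)+Q_{k_0}(g_B)$ for some $k_0$. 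If $w$ were feasible for \reff{qmod:min<R,w>} with $k\ge k_0$, then $\langle\hat p,w\rangle\ge 0$ because $E_k(h),\Phi_k(g_B)$ are the dual cones of $I_{2k}(h),Q_k(g_B)$, while $w|_{\A}=y$ forces $\langle\hat p,w\rangle=\mathscr L_y(\hat p)<0$ — a contradiction. Hence \reff{qmod:min<R,w>} is infeasible for all $k\ge k_0$; this step is routine.

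\textbf{Item (iii).} We may assume $R\in int(\Sig_{n,d})$. Flatness of $w^{*,k_2}|_{2t}$ yields, via Theorem~\ref{thm-CF:fec}, a finitely atomic $K$-measure $\mu$ for it; since $2t\ge d>\deg\A$, $\mu$ also represents $y$, so $y$ admits a $K$-measure and $z:=w^{*,k_2}|_d=\int_K[x]_d\,\mathtt d\mu$ is feasible for \reff{mom:min<R,z>}. Thus $\langle R,z\rangle\ge R_{min}$, while Proposition~\ref{pro:sdprlx}(ii) gives $\langle R,z\rangle=\langle R,w^{*,k_2}\rangle\le R_{min}$; so the value of \reff{qmod:min<R,w>} equals $R_{min}$ at order $k_2$, hence (being nondecreasing in $k$ and bounded by $R_{min}$) at every order $k\ge k_2$. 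Since $R\in int(\Sig_{n,d})\subseteq int\big((I_{2k}(h)+Q_k(g_B))\cap\re[x]_d\big)$, the point $p=0$ is strictly feasible for \reff{putsos:max<p,y>}, so strong duality makes the value of \reff{putsos:max<p,y>} equal $R_{min}$ for all $k\ge k_2$. Taking an order $k_3\ge k_2$ where the maximum is attained by some $p^*$, we get $\langle p^*,y\rangle=R_{min}$ and $R-p^*\in I_{2k_3}(h)+Q_{k_3}(g_B)\subseteq\mc P_d(K)$; since the value of \reff{psd:max<p,y>} is also $R_{min}$ by Proposition~\ref{pr:optimizer}(i), $p^*$ is the desired maximizer.

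\textbf{Item (ii), setup.} Assume $R\in int(\Sig_{n,d})$. By Propositions~\ref{pro:sdprlx}(i), (ii) and~\ref{pr:optimizer}(i), for every $k\ge\max(d/2,k_1)$ the relaxation \reff{qmod:min<R,w>} has a minimizer $w^{*,k}$, strong duality holds with common value $R_{min}$, and the given $p^*$ (feasible since its certificate has degree $\le 2k_1\le 2k$, and of value $R_{min}$) maximizes \reff{putsos:max<p,y>}. Complementary slackness gives $\langle R-p^*,w^{*,k}\rangle=\langle R,w^{*,k}\rangle-\mathscr L_y(p^*)=0$. Fixing a representation $R-p^*=\sum_i h_i\psi_i+\sum_j g_j\sigma_j$ with $\sigma_j=\sum_l q_{jl}^2$ SOS of degree $\le 2k_1-\deg g_j$, expanding gives $0=\sum_{j,l}q_{jl}^TL_{g_j}^{(k)}(w^{*,k})q_{jl}$, a sum of nonnegatives, so $L_{g_j}^{(k)}(w^{*,k})q_{jl}=0$ for all $j,l$; together with $L_{h_i}^{(k)}(w^{*,k})=0$ this makes $\mathscr L_{w^{*,k}}$ annihilate every bounded-degree element of the ideal $I:=(h_1,\dots,h_{m_1},q_{01},\dots,q_{0L})$, so $\ker M_t(w^{*,k}|_{2t})$ contains the degree-$\le t$ part of $I$ for $t$ up to roughly $k/2$.

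\textbf{Item (ii), crux.} Suppose $I$ is zero-dimensional, $D:=\dim\re[x]/I<\infty$. Then $\rank M_t(w^{*,k}|_{2t})\le\dim\big(\re[x]_t/(I\cap\re[x]_t)\big)\le D$ once $t$ is large enough (so that the truncated ideal has caught up), and since this rank is nondecreasing in $t$, for $k$ large a run of $d_K+1$ equal values must occur, giving $s\ge d/2$ with $\rank M_{s-d_K}(w^{*,k}|_{2s})=\rank M_s(w^{*,k}|_{2s})$; as \reff{loc-M>=0} for $w^{*,k}|_{2s}$ holds by passing to principal submatrices of $L_{g_j}^{(k)}(w^{*,k})\succeq 0$ and $L_{h_i}^{(k)}(w^{*,k})=0$, the truncation $w^{*,k}|_{2s}$ is flat. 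The obstacle — and the only place Assumption~\ref{ass:gB-nsig} enters — is proving $I$ zero-dimensional. Here one uses that, $R$ being generic, \reff{mom:min<R,z>} has a unique minimizer $z^*$ (Proposition~\ref{pr:alg:!min}(i)) whose representing measure is $r$-atomic with $r\le|\A|$ (Lemma~\ref{lm:extrm=>fin}) and supported on minimizers of $R-p^*$ over $K$, all lying in $V_\re(h,\{q_{0l}\})$; and the nonvanishing of $\Delta(h^{hom},g_J^{hom})$ forces each active stratum $V_\re(h,g_J)$ to be smooth of the expected dimension with no degenerations at infinity, which together with the genericity of $R$ makes the common zero set cut out by $h$ and the $q_{0l}$ finite. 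Making this last implication rigorous is where the real work of part (ii) lies; everything else is bookkeeping with degrees and matrix ranks.
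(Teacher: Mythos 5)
Your items (i) and (iii) are correct and essentially coincide with the paper's arguments; in (i) you derive a contradiction with primal feasibility directly from cone duality rather than showing the dual value is $+\infty$, which is an equivalent reformulation of the same certificate $\hat p$.

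The genuine gap is in item (ii), and it sits exactly where you locate it --- but the route you sketch toward closing it does not work. After complementary slackness you place the fixed SOS summands $q_{0l}$ of $\sigma_0$ (and the $h_i$) into $\ker M_t(w^{*,k})$ and propose to bound $\rank\, M_t$ by $\dim \re[x]/I$ for $I=(h_1,\dots,h_{m_1},q_{01},\dots,q_{0L})$. But $I$ is an artifact of one chosen representation of $q:=R-p^*$ and is in general not zero-dimensional even when the set of minimizers of $q$ on $K$ is finite: for instance, with $K=[-1,1]^2$ and $q=x_1^2+(1-x_2^2)x_2^2$ one may take $\sigma_0=x_1^2$, so $I=(x_1)$ is one-dimensional while $q$ has exactly three minimizers on $K$. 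Assumption~\ref{ass:gB-nsig} constrains only the tuples $(h^{hom},g_J^{hom})$ and says nothing about the $q_{0l}$, so the implication you need, namely that nondegeneracy of the strata plus genericity of $R$ forces $V_{\re}(h,\{q_{0l}\})$ to be finite, is not merely unproven --- it is false. Note also that the identities $L_{g_j}^{(k)}(w^{*,k})q_{jl}=0$ for $j\ge 1$ do not directly yield kernel vectors of $M_k(w^{*,k})$, so the ideal you can legitimately put in the kernel is genuinely too small to bound the rank.

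What the paper does instead: from $\langle q, w^{*,k}\rangle=0$, $q|_K\ge 0$ and $\min_K q=0$, it normalizes $(w^{*,k})_0=1$ (the case $(w^{*,k})_0=0$ is treated separately and gives $w^{*,k}|_{2k-2}=0$, trivially flat) and observes that $w^{*,k}$ is then a minimizer of the $k$-th moment relaxation of the polynomial optimization problem $\min_K q$. The hypothesis $R-p^*\in I_{2k_1}(h)+Q_{k_1}(g_B)$ says the SOS hierarchy for this problem has finite convergence, and Assumption~\ref{ass:gB-nsig} together with genericity of $R$ gives $\Delta(R^{hom},h^{hom},g_J^{hom})\ne 0$ (since $(R-p^*)^{hom}=R^{hom}$), whence Proposition~\ref{pro:fin:kt:R-p} shows the critical varieties of $\min_K q$ are finite. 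These are precisely the hypotheses of the flat-truncation theorem (Theorem~2.2 of \cite{Nie-FT}), which is then invoked to conclude that $w^{*,k}|_{2t}$ is flat for $k$ large. That theorem is the nontrivial ingredient your sketch would have to reprove: it extracts kernel elements cutting out the finite critical variety from certificates tied to the optimality conditions of the POP, not from the single fixed representation of $q$ that you use.
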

\begin{proof}
(i) If $\re[x]_{\A}$ is $K$-full and $y \not\in \mathscr{R}_{\A}(K)$,
by Corollary~\ref{cor:ctf-nomeas},
there exists $ p_1 \in \re[x]_{\A}$ such that
$
p_1|_K \geq 0, \, \langle p_1, y \rangle < 0.
$
By the $K$-fullness of $\re[x]_{\A}$, there exists $ p_2 \in \re[x]_{\A}$
with $p_2|_K>0$. So, for $\eps >0$ small, $\hat{p} := p_1 + \eps p_2$ satisfies
$
\hat{p} |_K > 0, \, \langle \hat{p} , y \rangle < 0.
$
Let $\eta_0>0$ be such that $(R+\eta_0 \hat{p})|_K>0$.
By Theorem~\ref{thm:PutThm}, both
$R+\eta_0 \hat{p}$ and $\hat{p}$ belong to
$\in I_{2t_1}(h) + Q_{t_1}(g_B)$, for some $t_1$.
Hence, for all $\eta > \eta_0$ and $k\geq t_1$,
$-\eta \hat{p}$ is feasible for \reff{putsos:max<p,y>}, because
\[
R+\eta \hat{p} =  R+\eta_0 \hat{p} + (\eta-\eta_0) \hat{p}
 \in I_{2k}(h) + Q_{k}(g_B).
\]
Note that $\langle -\eta \hat{p}, y\rangle \to +\infty$ as $\eta \to +\infty$.
So, the optimal value of \reff{putsos:max<p,y>} is $+\infty$.
By weak duality, its dual problem~\reff{qmod:min<R,w>} must be infeasible
for all $k \geq t_1$.

(ii) By Proposition~\ref{pro:sdprlx}(i), \reff{qmod:min<R,w>} and \reff{putsos:max<p,y>}
have the same optimal value, for all $k$, if $R \in int(\Sig_{n,d})$
(this is true if $R$ is generic in $\Sig_{n,d}$).
For all $k\geq k_1$, $p^*$ is feasible for \reff{putsos:max<p,y>}.
So, if $k\geq k_1$, then
\[
\langle R, w^{*,k} \rangle  = \langle p^*, w^{*,k} \rangle =\langle p^*, y \rangle,
\]
and
$
\langle R-p^*, w^{*,k} \rangle = 0.
$
Let $q := R-p^*$, then
\[
\langle q, w^{*,k} \rangle = 0
\quad \forall \, k\geq k_1.
\]
Clearly, $q$ is nonnegative on $K$.
Let $z^*$ be a minimizer of \reff{mom:min<R,z>} and $\mu \in meas(z,K)$.
Then, by Proposition~\ref{pr:optimizer} (i)
(note $R|_K>0$ if $R$ is generic in $\Sig_{n,d}$),
\[
0 = \langle R, z^*\rangle - \langle p^*, y \rangle
=  \langle q, z^* \rangle  = \int_K q \mathtt{d}\mu.
\]
Thus, $q$ vanishes on $\supp{\mu}$, and has a zero on $K$.

Consider the optimization problem:
\be \label{pop:min-q-K}
\min_x \quad q(x) \quad
s.t.  \quad h(x) = 0, \,  g_B(x) \geq 0.
\ee
The $k$-th order SOS relaxation for \reff{pop:min-q-K} is
\be \label{sos:minq}
\gamma_k := \max  \quad \gamma \quad s.t. \quad
q-\gamma \in  I_{2k}(h) + Q_k(g_B).
\ee
Its dual problem is
\be \label{mom:minq}
\min_{ w }  \quad \langle q, w \rangle \quad s.t. \quad
w \in \Phi_k(g_B) \cap E_k(h), w_0 = 1.
\ee
The minimum of $q$ over $K$ is $0$, and $\gamma_k = 0$ for all $k \geq k_1$.
Thus, the sequence $\{\gamma_k\}$ has finite convergence.
The SOS program \reff{sos:minq} achieves its optimal value for $k \geq k_1$,
because $q\in I_{2k_1}(h)+ Q_{k_1}(g_B)$.

Since $d>\deg(\A)$, $(R-p^*)^{hom} = R^{hom}$.
Under Assumption~\ref{ass:gB-nsig}, for every $J\subseteq [m_2+1]$
with $V_{\re}(h,g_J)\ne \emptyset$,
$\Delta(f, h^{hom}, g_J^{hom})$ is not constantly zero in $f$
(cf.~\cite[Theorem~3.2]{Nie-dis}).
So, if $R$ is generic in $\Sig_{n,d}$, then $\Delta(R^{hom}, h^{hom}, g_J^{hom}) \ne 0$
for all such $J$. By Proposition~\ref{pro:fin:kt:R-p} in the Appendix,
\reff{pop:min-q-K} has only finitely many critical points
and Assumption~2.1 in \cite{Nie-FT} for \reff{pop:min-q-K} is
satisfied\footnote{In \cite{Nie-FT}, polynomial optimization problems with only
inequality constraints were discussed. If there are equality constraints,
Assumption~2.1 in \cite{Nie-FT} can be naturally modified to include all
constraining equations, and Theorem~2.2 of \cite{Nie-FT} is still true,
with the same proof.}.

If $(w^{*,k})_{0}=0$, then $vec(1)^T M_k(w^{*,k}) vec(1) =0$,
and $M_k(w^{*,k}) vec(1) =0$ because $M_k(w^{*,k})\succeq 0$.
(Here $vec(p)$ denotes the coefficient vector of a polynomial $p$.)
This implies that $M_k(w^{*,k}) vec(x^\af)=0$ for all $|\af| \leq k-1$
(cf.~\cite[Lemma~5.7]{Lau}). For all $|\af|\leq 2k-2$,
we can write $\af = \bt + \eta$ with $|\bt|,|\eta| \leq k-1$,
and get
\[
(w^{*,k})_\af= vec(x^\bt)^T M_k(w^{*,k}) vec(x^\eta) =0.
\]
So, the truncation $w^{*,k}|_{2k-2}$ is flat.
%
%

If $(w^{*,k})_0>0$, we can scale $w^{*,k}$ so that $(w^{*,k})_0=1$.
Then $w^{*,k}$ is a minimizer of \reff{mom:minq}
because $\langle q, w^{*,k} \rangle = 0$ for all $k\geq k_1$.
By Theorem~2.2 of \cite{Nie-FT},
$w^{*,k}$ has a flat truncation $w^{*,k}|_{2t}$ if $k$ is big enough.
Indeed, $w^{*,k}|_{2k-2}$ is flat (cf. Remark~2.3 of \cite{Nie-FT}).
So, there is a flat truncation $w^{*,k}|_{2t}$
with $t\geq d/2$ if $k$ is big enough.

(iii) Suppose $w^{*,k_2}|_{2t}$ is flat and $2t\geq d$.
Let $R_{min}$ be the optimal value of \reff{mom:min<R,z>}, then
by Proposition~\ref{pro:sdprlx}(ii), $R_{min} \geq \langle R, w^{*,k_2} \rangle$.
On the other hand, the truncation $w^{*,k_2}|_{d}$
is feasible in \reff{mom:min<R,z>}, so
$R_{min} \leq \langle R, w^{*,k_2} \rangle$.
Thus, $R_{min} = \langle R, w^{*,k_2} \rangle$.
Indeed, we have $R_{min} = \langle R, w^{*,k} \rangle$ for all $k\geq k_2$.
By assumption, \reff{putsos:max<p,y>} has a maximizer $p^*$
at a big order, say, $k_3 \geq k_2$.
Then $R-p^* \in I_{2k_3}(h)+Q_{k_3}(g_B)$.
Clearly, $p^*$ is feasible for \reff{psd:max<p,y>} because $(R-p^*)|_K\geq 0$.
By Proposition~\ref{pro:sdprlx}(i), the optimal value of \reff{putsos:max<p,y>}
is also equal to $R_{min}$, if $R \in int(\Sig_{n,d})$.
So, $\langle p^*, y\rangle =\langle R, w^{*,k_3} \rangle = R_{min}$.
The optimal values of \reff{mom:min<R,z>} and \reff{psd:max<p,y>}
are equal, by Proposition~\ref{pr:optimizer}(i) if $R \in int(\Sig_{n,d})$.
Hence, $p^*$ is a maximizer of \reff{psd:max<p,y>}.
\end{proof}

\begin{remark} \label{rmk:K-ful}
By item (i) of Theorem~\ref{thm:fin-cvg},
if $\re[x]_{\A}$ is $K$-full and $y$ admits no $K$-measures,
then \reff{qmod:min<R,w>} is infeasible for some $k$, for {\it any} $R$
(we don't need $R \in \Sig_{n,d}$).
When $\re[x]_{\A}$ is not $K$-full and $y$ admits no $K$-measures,
it is not clear whether or not there exists $k$
such that \reff{qmod:min<R,w>} is infeasible.
This is because there does not exist a characterization like Theorem~\ref{thm2.2:NF03}
for the membership in $\mathscr{R}_{\A}(K)$ if $\re[x]_{\A}$ is not $K$-full,
to the best of the author's knowledge.
Theorem~\ref{thm2.2:NF03} and Corollary~\ref{cor:ctf-nomeas}
assume $K$-fullness of $\re[x]_{\A}$.
In many applications, $\re[x]_{\A}$ is often $K$-full.
On the other hand, if $y$ admits a $K$-measure,
no matter $\re[x]_{\A}$ is $K$-full or not, for a generic $R \in \Sig_{n,d}$,
Algorithm~\ref{sdpalg:A-tkmp} will find
a finitely atomic $K$-representing measure for $y$,
either asymptotically or in finitely many steps.
\end{remark}

\begin{remark}  \label{rmk:ficvg}
a) When $int(K)\ne\emptyset$, \reff{qmod:min<R,w>} has interior points,
and thus \reff{putsos:max<p,y>} achieves its optimal value,
for every order $k$ (cf. \cite{HN04,LasBok}).
b) By Theorem~\ref{thm:fin-cvg} (ii) and (iii), the condition $R-p^* \in I(h) + Q(g_B)$
is almost necessary and sufficient for finite convergence to occur,
modulo some general technical assumptions.
c) If a polynomial $f$ is nonnegative on $K$,
then $f \in I(h) + Q(g_B)$,
under some general conditions (cf. \cite{Nie-opcd}).
So, the condition $R-p^* \in I(h) + Q(g_B)$ is often satisfied.
Thus, it is very likely that Algorithm~\ref{sdpalg:A-tkmp} has finite convergence.
Indeed, the finite convergence occurred in all our numerical experiments.
\end{remark}

\section{Applications}\label{sec:appl}
\setcounter{equation}{0}

In this section, we show how Algorithm~\ref{sdpalg:A-tkmp} can be applied to
solve CP/SOEP-decomposition problems
and the standard truncated $K$-moment problems.

\subsection{Completely positive matrices}
\label{sbsec:cp}

Recall that a matrix $C \in \mc{S}^n$ is completely positive
if there exist $u_1,\ldots, u_r \in \re_+^n$ such that
\be \label{def:compos}
C = u_1u_1^T+\cdots+u_ru_r^T.
\ee
If \reff{def:compos} holds, we say $C$ is a CP-matrix.
The number $r$ is called the length of \reff{def:compos}.
The smallest such $r$ is called the {\it CP-rank} of $C$ (cf.~\cite{BerSM03}).
Let $\mathtt{Cp}(n)$ be the cone of $n\times n$ CP-matrices.
Clearly, $C\in \mathtt{Cp}(n)$ if and only if
$C=BB^T$ for a {\it nonnegative matrix} $B$ (i.e., every entry of $B$ is nonnegative).
So, every CP-matrix must be positive semidefinite, but typically not vice versa.
The dual cone of $\mathtt{Cp}(n)$ is $\mathtt{Co}(n)$,
the set of $n\times n$ copositive matrices
(a matrix $A \in \mc{S}^n$ is copositive if
$x^TAx \geq 0$ for all $x\in \re_+^n$).
Let
$
\Delta_n = \{ x\in \re_+^n:\, x_1+\cdots+x_n = 1 \}
$
be the standard simplex in $\re^n$.

Completely positive and copositive matrices have wide applications in optimization,
like approximating stability numbers (cf.~\cite{dKPas02})
or solving nonconvex quadratic programs (cf.~\cite{Bur09}).
Checking the membership in $\mathtt{Cp}(n)$ is NP-hard (cf. \cite{DicGij11}).
We refer to the survey \cite{Dur10} by D\"{u}r
and the book \cite{BerSM03} by Berman and Shaked-Monderer.
%
%
Recently, Lasserre~\cite{LasCO} proposes a convergent hierarchy of
outer approximations for $\mathtt{Co}(n)$,
the dual cone of $\mathtt{Cp}(n)$. It also gives a convergent hierarchy of
inner approximations for $\mathtt{Cp}(n)$.
Therefore, the membership in the interior of $\mathtt{Cp}(n)$
can be checked in finitely many step by the method in \cite{LasCO}.
%
%
When $C$ is acyclic or circular,
Dickinson and D\"{u}r \cite{DicDur12} showed that
checking complete positivity can be done in linear-time.
For general cases, Berman and Rothblum \cite{BerRot06} showed that
checking complete positivity and computing CP-ranks can be done
by using Renegar's algorithm on quantifier elimination \cite{Regr92}.
This is a symbolic algorithm. It typically runs in exponential time,
and is usually very expensive to implement. In the prior existing work,
there are no much efficient numerical methods for solving general
CP-decomposition problems, in the author's best knowledge.

Clearly, $C$ is a CP-matrix if and only if
\be \label{decom:com-pos}
C = \varrho_1 u_1 u_1^T + \cdots +  \varrho_r u_r u_r^T,
\ee
for some $u_1,\ldots, u_r  \in \Delta_n$,
$\varrho_1,\ldots, \varrho_r >0$.
Every symmetric matrix $C$ can be identified by the vector consisting of its entries
\[ \mathtt{c} = (C_{ij})_{ i \leq j }. \]
Let
$
\mc{Q}_n = \{ \af \in \N^n:\, |\af| = 2\}.
$
Then $\mathtt{c}$ is a $\mc{Q}_n$-tms, and \reff{decom:com-pos} is equivalent to
\[
\mathtt{c} =  \varrho_1 [u_1]_{\mc{Q}_n} + \cdots + \varrho_r [u_r]_{\mc{Q}_n}.
\]
Clearly, if $C\in \mathtt{Cp}(n)$, then
$\mathtt{c}$ admits a $\Delta_n$-measure.
Conversely, if $\mathtt{c}$ admits a $\Delta_n$-measure,
then $\mathtt{c}$ also admits a finitely atomic $\Delta_n$-measure like the above
(cf.~Proposition~\ref{pro:atms:extn}), and $C\in \mathtt{Cp}(n)$.
Thus, the CP-decomposition problem is essentially an $\A$-TKMP
with $\A =\mc{Q}_n, K=\Delta_n$.
The simplex $\Delta_n$ is in the form \reff{def:K},
with $h=(\mathbf{1}^Tx-1)$ and $g=(x_1, \ldots, x_n)$
($\mathbf{1}$ denotes the vector of all ones),
and $\Delta_n \subseteq B(0,1)$.
Note that $\re[x]_{\mc{Q}_n}$ is $\Delta_n$-full.

By the above, the CP-decomposition problem can be solved by Algorithm~\ref{sdpalg:A-tkmp}.
If $C \not\in \mathtt{Cp}(n)$, then Algorithm~\ref{sdpalg:A-tkmp}
will return a certificate for this (i.e., \reff{qmod:min<R,w>} is infeasible for some $k$),
by Theorem~\ref{thm:fin-cvg}(i).
If $C\in \mathtt{Cp}(n)$, then we can asymptotically get a flat extension of $\mathtt{c}$,
for almost all $R\in \Sig_{n,d}$ ($d>2$ is even), by Theorem~\ref{cvgthm:asymp}.
Moreover, we can likely get it in finitely many steps (cf.~Remark~\ref{rmk:ficvg}).
Indeed, finite convergence occurred in all our numerical experiments.
After getting a flat extension of $\mathtt{c}$,
we can get a $r$-atomic $\Delta_n$-representing measure for $\mathtt{c}$,
which then produces a CP-decomposition for $C$.

\begin{exm}
Consider the matrix:
\[
C =
\bbm
     6  &  4  &  1  &  2  &  2 \\
     4  &  6  &  0  &  1  &  3 \\
     1  &  0  &  3  &  1  &  2 \\
     2  &  1  &  1  &  2  &  1 \\
     2  &  3  &  2  &  1  &  5 \\
\ebm.
\]
We apply Algorithm~\ref{sdpalg:A-tkmp} to the corresponding $\mc{Q}_5$-tms
\[
\mathtt{c}:=(6,4,1,2,2,   6,0,1,3,   3,1,2,   2,1,  5).
\]
To get a decomposition of small length,
we run Algorithm~\ref{sdpalg:A-tkmp} for a couple of times
(cf.~Remark~\ref{rmk:min-spt}).
In each time, we got a CP-decomposition for $C$.
The smallest length we got is $5$,
which occurs in the factorization $C=BB^T$ with
\[
B =
\bbm
    1.0911  &  2.0836  &  0.0000  &  0.3148  &  0.6076  \\
    0.0000  &  1.6456  &  0.0000  &  1.8143  &  0.0000  \\
    0.1488  &  0.0000  &  1.0379  &  0.0000  &  1.3786  \\
    1.0797  &  0.3606  &  0.8087  &  0.2241  &  0.0000  \\
    0.5830  &  0.0000  &  0.0000  &  1.6535  &  1.3878
\ebm.
\]
The CP-rank of $C$ is $5$, because
$5=\rank\,C \leq \mbox{CP-rank}\,C \leq 5$.
\end{exm}

\begin{exm}
Consider the matrix (cf.~\cite[Example~2.9]{BerSM03}):
\[
C =
\bbm
     1  &  1  &  0  &  0  &  1 \\
     1  &  2  &  1  &  0  &  0 \\
     0  &  1  &  2  &  1  &  0 \\
     0  &  0  &  1  &  2  &  1 \\
     1  &  0  &  0  &  1  &  6 \\
\ebm.
\]
It is positive semidefinite, but not completely positive (cf.~\cite{BerSM03}).
We apply Algorithm~\ref{sdpalg:A-tkmp} to verify this fact.
It terminates at Step~2 with $k=2$,
because \reff{qmod:min<R,w>} is infeasible.
This confirms that $C$ is not a CP-matrix.
\end{exm}

\begin{exm} (random instances)
We apply Algorithm~\ref{sdpalg:A-tkmp} to randomly generated CP matrices.
If $C\in \mathtt{Cp}(n)$,
then $C$ admits a CP-decomposition \reff{decom:com-pos}
with length $r \leq \half n(n+1)$, by Carath\'{e}odory's Theorem.
Indeed, it can be slightly sharpened to
$r \leq \half \rank\,C(\rank\,C+1)-1$, if $\rank\,C \geq 2$
(cf.~\cite{BarBer03,LKF04}).
Clearly, we always have $r \geq \rank (C)$.
So, if $C\in \mathtt{Cp}(n)$ and $C$ has full rank,
then $n \leq r \leq cp(n):=\half n(n+1)-1$, for $n>1$.
For $n=2,3,\ldots,8$, we generate $50$ instances,
except for $n=8$ (only $20$ instances are generated).
For each instance, generate $N:=\half n(n+1)$ points randomly from $\Delta_n$,
say, $u_1, \ldots, u_N$, and let
$C = c_1 u_1u_1^T+\cdots+ c_N u_N u_N^T$ with $c_i>0$ random.
For each $C$, we apply Algorithm~\ref{sdpalg:A-tkmp} ten times
and let $r$ be the smallest length that is obtained.
Algorithm~\ref{sdpalg:A-tkmp}
is able to get a CP-decomposition for all generated $C$.
The obtained values of $r$ are listed in the table:
\bcen
\btab{|c|c|c|c|c|c|c|c|} \hline
$n$       & 2  & 3 & 4 & 5 & 6 & 7 & 8 \\ \hline
$cp(n)$ & 2  & 5 & 9 & 14 & 20 & 27 & 35 \\ \hline
$r$     & 2  & 3 & 4 & 5,6 & 6,7,8  & 8,9,10 & 11,12,13,14,15 \\ \hline
\etab.
\ecen
They are equal or close to the lower bound $n$
(because $\rank\,C=n$ for generated $C$),
and is much less than the upper bound $cp(n)$ for $n\geq 4$.
\end{exm}

\subsection{Sum of even powers (SOEP) of real linear forms}
\label{sbsec:soep}

Recall that a form $f$ of an even degree $m$ is SOEP if
for some real linear forms $L_1,\ldots,L_r$
\be   \label{decom:soep}
f = L_1^{m}+\cdots+L_r^{m}.
\ee
Let $Q_{n,m}$ denote the set of all SOEP forms in $n$ variables and of degree $m$.
Reznick proved that $Q_{n,m}$ is a convex cone with nonempty interior
and its dual cone is the set of nonnegative forms in $n$ variables and of degree $m$.
We refer to Reznick \cite{Rez92} for SOEP forms.
The number of sums, $r$, is called the length of \reff{decom:soep}.
The minimum $r$ for which \reff{decom:soep} holds
is called the {\it width} of $f$, and is denoted as $w(f)$ (cf.~\cite{Rez92}).
The decomposition~\reff{decom:soep} is called {\it minimum} if $r=w(f)$.
SOEP decompositions naturally have wide and interesting applications,
like in Waring's problems, quadrature problems, sphere designs \cite{Rez92}.
It is typically quite difficult to check whether a form is SOEP or not.
As shown by Reznick \cite{Rez92}, when $m\geq 4$,
a rational form $f \in Q_{n,m}$
may not have a decomposition \reff{decom:soep} with all $L_i$ rational.
Therefore, numerical methods are preferable in applications.
In the prior existing work, there are no much efficient numerical methods
for solving SOEP decomposition problems,
in the author's best knowledge.

Let $\mathbb{H}_{m}^n = \{\af \in \N^n:\, |\af|=m\}$.
We can write a form $f$ of degree $m$ as
\[
f = \sum_{ \af \in \mathbb{H}_{m}^n }  \binom{m}{\af}
\check{f}_\af x^\af.
\]
Denote $\check{f}:=(\check{f}_\af)_{ \af \in \mathbb{H}_{m}^n } $.
So, $f$ can be identified by the $\mathbb{H}_{m}^n$-tms $\check{f}$.
If $f$ is SOEP and \reff{decom:soep} holds,
then we can write each $L_i = \sqrt[m]{c_i}(u_i^Tx)$ with $c_i>0$ and
\[
u_i \in \mathbb{S}^{n-1}_+:=\{x\in \mathbb{S}^{n-1}: \mathbf{1}^Tx \geq 0\}.
\]
Thus, we get
\[
f = \sum_{i=1}^r \sum_{ \af \in \mathbb{H}_{m}^n }  \binom{m}{\af} c_i u_i^\af x^\af.
\]
The above is equivalent to the decomposition:
\be  \label{check(f):htmp}
\check{f} =  c_1 [u_1]_{\mathbb{H}_{m}^n} +\cdots+ c_r [u_r]_{\mathbb{H}_{m}^n}.
\ee
Clearly, if $f$ is SOEP, then $\check{f}$ admits a $\mathbb{S}^{n-1}_+$-measure.
Conversely, if $\check{f}$ admits an $\mathbb{S}^{n-1}_+$-measure,
then $\check{f}$ also admits a finitely atomic $\mathbb{S}^{n-1}_+$-measure
(cf. Proposition~\ref{pro:atms:extn}), and so $f$ is SOEP.
Hence, checking $f \in Q_{n,m}$
is equivalent to determining whether the $\mathbb{H}_{m}^n$-tms $\check{f}$
admits a $\mathbb{S}^{n-1}_+$-measure or not.
The latter is a $\A$-TKMP with $\A=\mathbb{H}_{m}^n$ and $K=\mathbb{S}^{n-1}_+$.
Note that $\re[x]_{ \mathbb{H}_{m}^n }$ is $\mathbb{S}^{n-1}_+$-full.

SOEP-decomposition problems can be solved by Algorithm~\ref{sdpalg:A-tkmp}.
The set $\mathbb{S}^{n-1}_+$ is as in \reff{def:K}
with $h=(\|x\|_2^2-1)$ and $g=(\mathbf{1}^Tx)$.
Clearly, $\mathbb{S}^{n-1}_+ \subseteq B(0,1)$.
In \reff{qmod:min<R,w>}, the tuple $g_B$ can be replaced by $g$,
because $\Psi_k(g) \cap E_k(h)=\Psi_k(g_B) \cap E_k(h)$.
If $f$ is not SOEP, then $\check{f}$ admits no $\mathbb{S}^{n-1}_+$-measures,
and Algorithm~\ref{sdpalg:A-tkmp} can give a certificate for this
(i.e.,\reff{qmod:min<R,w>} is infeasible for some order $k$), by Theorem~\ref{thm:fin-cvg}(i).
If $f$ is SOEP, then we can asymptotically get
a flat extension of $\check{f}$, for almost all
$R \in \Sig_{n,d}$ ($d>m$ is even), by Theorem~\ref{cvgthm:asymp}.
Moreover, we can likely get it in finitely many steps (cf. Remark~\ref{rmk:ficvg}).
Indeed, this occurred in all our numerical experiments.
Once a flat extension of $\check{f}$ is obtained,
we can easily get an SOEP-decomposition for $f$
from a finitely atomic representing measure for $\check{f}$.

\begin{exm}
Consider the sextic form
\[
q_{\lmd} := (x_1^2+x_2^2+x_3^2)^3 - \lmd (x_1^6+x_2^6+x_3^6).
\]
It is SOEP if and only if $\lmd \leq 2/3$ (cf.~\cite[p.~146]{Rez92}).
For $\lmd = 2/3$, by running Algorithm~\ref{sdpalg:A-tkmp} a few times,
we got an SOEP decomposition of length $10$ for $q_{2/3}$:
{\small
\[
\baray{c}
\frac{1}{15} \left(
(x_1+x_2)^6+(x_2+x_3)^6+(x_1+x_3)^6 + (x_1-x_2)^6 +
 (x_1-x_3)^6+ (x_2-x_3)^6 \right)  + \\
\frac{1}{60} \left(
(x_1+x_2+x_3)^6 +
(-x_1+x_2+x_3)^6 + (x_1-x_2+x_3)^6  + (x_1+x_2-x_3)^6
\right).
\earay
\]}
\noindent
For $\lmd = 1/3$, we can get an SOEP-decomposition of length $11$ for $q_{1/3}$,
by the same way. The lengths $10$ and $11$ are the smallest ones
that we can get for $q_{2/3}$ and $q_{1/3}$ respectively
(cf. Remark~\ref{rmk:ficvg}).
When $\lmd = 1$, \reff{qmod:min<R,w>} is infeasible for $k=4$,
and Algorithm~\ref{sdpalg:A-tkmp} terminates at Step~2.
This confirms $q_{1} \not\in Q_{3,6}$.
\end{exm}

\begin{exm} (random instances)
We apply Algorithm~\ref{sdpalg:A-tkmp} to
randomly generated SOEP forms. Let $m$ be an even degree.
If $f \in Q_{n,m}$, then its width $w(f) \leq N:=\binom{n+m-1}{m}$, by Carath\'{e}odory's Theorem.
If $f \in int(Q_{n,m})$, then its width $w(f) \geq N_0:=\binom{n+m/2-1}{m/2}$
(cf.~\cite[Theorem~3.14(iv)]{Rez92}).
If $n=2$ or $(n,m)=(3,4)$, then $w(f) \leq N_0$ (cf.~\cite[Theorem~4.6]{Rez92}).
So, for the above range of $(n,m)$, we know the generic width is $N_0$.
For other values of $(n,m)$, if $f$ is generic inside $Q_{n,m}$, then $N_0 \leq w(f) \leq N$.
We consider $(n,m)$ from the table:
\bcen
\btab{|c|c|c|c|c|c|c|c|c|} \hline
$(n,m)$      & (2,4) & (2,6) & (2,8) & (2,10) &  (3,4) &  (4,4)    &  (3,6)   &  (3,8) \\ \hline
{\tt gwidth}  &   3   &   4   &   5   &    6   &    6   &  [10,35]  & [10,28]  &  [15,45] \\ \hline
   $r$        &   3   &   4   &   5   &    6   &   6,7  &  12,13,14 &   11,12  &  17,18,19 \\ \hline
\etab.
\ecen
In the above, {\tt gwidth} is $N_0$ if $n=2$ or $(n,m)=(3,4)$,
and is the range $[N_0,N]$ for other cases.
For each pair $(n,m)$ from the above table, we generate $50$ instances.
In each instance, generate points $u_1, \ldots, u_{N}$ randomly  from
$\mathbb{S}^{n-1}$, and let
$f = c_1 (u_1^Tx)^m+\cdots+  c_N (u_{N}^Tx)^m$ with $c_i>0$ random.
For each generated $f$, we run Algorithm~\ref{sdpalg:A-tkmp}
for ten times, and choose $r$ to be the smallest length of
the obtained SOEP-decompositions.
For all generated $f$, we got an SOEP-decomposition,
and the values of obtained lengths $r$ are listed in the above table.
We can see that $r$ is equal or close to the minimum.
\end{exm}

\subsection{Standard truncated $K$-moment problems}

When $\A= \N_m^n$, the $\A$-TKMP is specialized to the standard
truncated $K$-moment problem (TKMP),
which was originally studied by Curto and Fialkow \cite{CF91,CF96,CF05,CF09}.
Algorithm~\ref{sdpalg:A-tkmp} can be naturally applied to solve TKMPs.
The set $\re[x]_m$ is $K$-full, for any set $K$.
We are interested in the case that $K$ is compact.
If a tms $y \in \re^{ \N_m^n }$ admits no $K$-measures, Algorithm~\ref{sdpalg:A-tkmp}
can return a certificate for the nonexistence of representing measures;
if $y$ admits a $K$-measure, we can asymptotically get a flat extension of $y$,
for almost all $R \in \Sig_{n,d}$ ($d>m$ is even), by Theorems~\ref{cvgthm:asymp};
moreover, we can likely get it in finitely many steps
(cf. Theorem~\ref{thm:fin-cvg} and Remark~\ref{rmk:ficvg}).
In the author's best knowledge, Algorithm~\ref{sdpalg:A-tkmp} is
the first numerical algorithm that can
solve general TKMPs with a compact semialgebraic set $K$.

\begin{exm}
Consider the following tms in $\re^{ \N_6^2 }$:
\bcen
$
\baray{c}
(1,
0, 0,
1/3, 0, 1/3,
0, 0, 0, 0,
1/5, 0, 1/9, 0, 1/5, \\
0, 0, 0, 0, 0, 0,
1/7, 0, 1/15, 0, 1/15, 0, 1/7).
\earay
$
\ecen
Its moments are listed in the graded lexicographical ordering.
This tms admits a measure supported on $[-1,1]^2$,
because its $\af$-th moment is the mean value of $x^\af$ on $[-1,1]^2$.
We apply Algorithm~\ref{sdpalg:A-tkmp} to this tms.
In each time of running, we got a $r$-atomic representing measure
supported in $[-1,1]^2$. After a repeated running,
the smallest $r$ we got is $10$ (cf. Remark~\ref{rmk:min-spt}),
which occurs in the representing measure $\Sig_{i=1}^{10} c_i \dt(u_i)$
with $u_i$ and $c_i$ given as:
\bcen
\btab{|cc|cc|}  \hline
$u_i$ &   $c_i$  &     $u_i$      &  $c_i$   \\  \hline
(-0.7983,  -0.9666) &  0.0318   &  (-0.8710,   -0.2228)  &  0.0947   \\  \hline
(-0.2155,  -0.6211)  & 0.1698   &  (-0.9175,  0.8643)  & 0.0328    \\  \hline
(0.5833,  -0.8876) &  0.0729   &  (-0.4834,   0.4714) &  0.1662   \\  \hline
(0.3541,  0.0676)  &  0.2054  & (0.9269,   -0.3819) &  0.0672    \\  \hline
(0.0841,  0.9294)  &  0.0717  & (0.8153,   0.6919) &  0.0874   \\  \hline
\etab.
\ecen
\end{exm}

\bigskip \noindent
{\bf Acknowledgement}\,
The author was partially supported by the NSF grants
DMS-0844775, DMS-1417985.
He would like very much to thank Raman Sanyal and Bernd Sturmfels
for comments on generic linear optimization over compact convex sets.

\appendix
\section{Generic Finiteness of critical varieties}
\setcounter{equation}{0}

Consider the polynomial optimization problem
\be \label{min:f:K}
\left\{\baray{rl}
\min & f(x)  \\
s.t.  & h_i(x) = 0\,(i \in [m_1]), \,
g_j(x) \geq 0\,( j\in [m_2+1]).
\earay \right.
\ee
For each $J  \subseteq \{1,\ldots, m_2+1\}$, denote
\[
\mc{V}_J := \{ x\in \cpx^n: \, h(x)=0,  g_J(x)=0,\,
\rank \, Jac(f,h,g_J)|_x \leq m_1+|J| \}.
\]
The set of critical points of \reff{min:f:K} with the active set $J$
is contained in $\mc{V}_J$, which is called a critical variety of \reff{min:f:K}.
We show that if the coefficients of $f^{hom}, h^{hom}, g_J^{hom}$
satisfy some discriminantal inequalities, then $\mc{V}_J$ is finite.
We refer to \cite[Section~3]{Nie-dis}) for the
definition of discriminants $\Delta$.

\begin{pro} \label{pro:fin:kt:R-p}
Let $f,h_i\,(i\in [m_1]), g_j \,(j\in [m_2+1]) \in \re[x]$,
and $\mc{V}_J$ be defined as above.
For any $J \subseteq [m_2+1]$, if
\[
\Delta(f^{hom}, h^{hom}, g_J^{hom}) \ne 0, \quad
\Delta(h^{hom}, g_J^{hom}) \ne 0,
\]
then $\mc{V}_J$ is finite.
\end{pro}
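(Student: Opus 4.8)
The plan is to argue by contradiction, exploiting the points at infinity of a would‑be positive–dimensional component of $\mc{V}_J$. First note that $\mc{V}_J$ is Zariski closed in $\cpx^n$, being the common zero set of $h$, of $g_J$, and of all $(m_1+|J|+1)$‑minors of $Jac(f,h,g_J)$; so if it were infinite it would contain an irreducible affine curve $C$. Since a projective curve meets every hyperplane, the projective closure of $C$ in $\P^n$ contains a point at infinity $[0:u]$ with $u\in\cpx^n\setminus\{0\}$. Using Puiseux's theorem I would take a local analytic branch of $C$ through that point and, after reparametrization, write it as $x(\tau)=\tau^{w}\big(u+O(\tau^{-1})\big)$ with an integer $w\ge 1$ and $\|x(\tau)\|\to\infty$ as $\tau\to\infty$. (One may assume each $h_i$ and each $g_j$ $(j\in J)$ has degree $\ge 1$ and $f$ is non‑constant; the remaining cases make $\mc{V}_J$ empty or the statement trivial.)

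Next I would extract the homogeneous data at $u$. From $h_i(x(\tau))\equiv 0$ and $g_j(x(\tau))\equiv 0$ $(j\in J)$, comparing the top‑order coefficients in $\tau$ (the coefficient of $\tau^{w\deg h_i}$ in $h_i(x(\tau))$ is $h_i^{hom}(u)$, and likewise for $g_j$) gives $h^{hom}(u)=0$ and $g_J^{hom}(u)=0$. For the Jacobian, a row‑by‑row expansion — the $h_i$‑row of $Jac(f,h,g_J)$ at $x(\tau)$ equals $\tau^{w(\deg h_i-1)}$ times $\big(\nabla h_i^{hom}(u)^{T}+O(\tau^{-1})\big)$, and similarly for the $f$‑ and $g_j$‑rows, this being valid even when some $\partial_\ell h_i^{hom}$ vanish at $u$ — yields
\[
Jac(f,h,g_J)\big|_{x(\tau)}=D(\tau)\Big(A+E(\tau)\Big),\qquad A:=Jac(f^{hom},h^{hom},g_J^{hom})\big|_{u},\ \ E(\tau)\to 0,
\]
where $D(\tau)=\diag\big(\tau^{w(\deg f-1)},\tau^{w(\deg h_1-1)},\dots,\tau^{w(\deg g_{j}-1)}\big)$ is invertible. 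Hence $\rank Jac(f,h,g_J)|_{x(\tau)}=\rank(A+E(\tau))\ge \rank A$ for $\tau$ large, by lower semicontinuity of rank; since $x(\tau)\in\mc{V}_J$, the left side is $\le m_1+|J|$, so $\rank A\le m_1+|J|$.

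Now I would bring in the two discriminant hypotheses. First, $\Delta(h^{hom},g_J^{hom})\ne 0$ together with $h^{hom}(u)=g_J^{hom}(u)=0$, $u\ne 0$, forces $\rank Jac(h^{hom},g_J^{hom})|_u=m_1+|J|$; by the same asymptotic factorization $\rank Jac(h,g_J)|_{x(\tau)}=m_1+|J|$ for generic $\tau$ on the branch, so at such $\tau$ the vector $\nabla f(x(\tau))$ lies in the span of $\{\nabla h_i(x(\tau)),\nabla g_j(x(\tau)):i\in[m_1],\,j\in J\}$. Differentiating $h_i(x(\tau))=0$ and $g_j(x(\tau))=0$ in $\tau$ then gives $\frac{d}{d\tau}f(x(\tau))=\nabla f(x(\tau))^{T}x'(\tau)=0$, so $f$ is constant along the branch, whence comparing the $\tau^{w\deg f}$‑term forces $f^{hom}(u)=0$ (using $\deg f\ge 1$). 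Thus $u\ne 0$ satisfies $f^{hom}(u)=0$, $h^{hom}(u)=0$, $g_J^{hom}(u)=0$ and $\rank A\le m_1+|J|<1+m_1+|J|$, which by the defining property of the discriminant means $\Delta(f^{hom},h^{hom},g_J^{hom})=0$, contradicting the second hypothesis. Therefore $\mc{V}_J$ must be finite.

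The main obstacle I expect is making the branch analysis fully rigorous: justifying the Puiseux parametrization with the asserted pole order, keeping track of the (harmless) degeneracies when $\nabla h_i^{hom}(u)$, $\nabla g_j^{hom}(u)$ or $\nabla f^{hom}(u)$ partly vanish so that the factorization through the invertible diagonal $D(\tau)$ still holds, and handling the ``generic $\tau$ on the branch'' step so that the constancy of $f$ on $C$ is genuinely deduced. The equivalence characterizing $\Delta$ (nonexistence of a nonzero homogeneous solution with a Jacobian rank drop) can be used as a black box, referring to \cite{Nie-dis}.
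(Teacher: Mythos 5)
Your argument is correct, but it reaches the contradiction by a genuinely different route than the paper. The paper homogenizes everything at the outset: it forms the projective variety $\mc{U}_J\subseteq\P^n$ cut out by $\widetilde{h}=\widetilde{g_J}=0$ and the rank condition on $Jac(\tilde f,\tilde h,\widetilde{g_J})$, notes that an infinite $\mc{V}_J$ forces $\mc{U}_J$ to be positive-dimensional, and invokes Bezout (every positive-dimensional projective variety meets every hyperplane) to produce a point $(0,v)$ at infinity; the nonzero $v$ then satisfies the homogeneous system $h^{hom}(v)=g_J^{hom}(v)=0$ with $\rank\,Jac(f^{hom},h^{hom},g_J^{hom})|_v\le m_1+|J|$, and the key implication $f^{hom}(v)=0$ is cited from \cite[Section~3]{Nie-dis} (it comes from Euler's relations), giving the same discriminant contradiction you obtain. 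You instead stay affine, run a Puiseux branch $x(\tau)=\tau^w(u+o(1))$ of a curve in $\mc{V}_J$ out to infinity, recover $h^{hom}(u)=g_J^{hom}(u)=0$ and the rank drop of $A=Jac(f^{hom},h^{hom},g_J^{hom})|_u$ by leading-order asymptotics and lower semicontinuity of rank, and derive $f^{hom}(u)=0$ from first principles by showing $f$ is constant (hence bounded) along the branch via the Lagrange-multiplier identity. What each buys: the paper's proof is shorter and purely algebraic, but it outsources the implication ``rank drop plus full rank of $Jac(h^{hom},g_J^{hom})$ implies $f^{hom}=0$'' to the reference and relies on the passage from $\mc{V}_J$ to $\mc{U}_J$; yours is self-contained on that implication but pays for it with the analytic machinery (Puiseux expansions, the asymptotic factorization through $D(\tau)$, semicontinuity of rank) whose bookkeeping you correctly flag as the main burden. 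All the steps you outline do go through — in particular the constancy of $f$ along the branch only needs the span condition pointwise for large $\tau$ together with holomorphy of $f(x(\tau))$ — so this is a valid alternative proof.
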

\begin{proof}
Denote $\tilde{x}:=(x_0,x_1,\ldots,x_n)$
and by $\tilde{p}$ the homogenization of a singleton or tuple of polynomials $p$.
Let $\mc{U}_J$ be the projective variety in $\P^n$ (cf. \cite{Har}) defined as
\be \label{ktjac:hmg:x}
\rank\, Jac(\tilde{f}, \tilde{h},\widetilde{g_J})|_x \leq m_1 + |J|,  \quad
\widetilde{h}(\tilde{x}) =\widetilde{g_J}(\tilde{x})  =0.
\ee
Clearly, if $u\in \mc{V}_J$, then  $(1,u) \in \mc{U}_J$.
Suppose otherwise $\mc{V}_J$ is infinite, then $\mc{U}_J$ is positively dimensional.
By Bezout's Theorem (cf. \cite{Har}),
$\mc{U}_J$ must intersect the hyperplane $x_0=0$ in $\P^n$,
i.e., \reff{ktjac:hmg:x} has a solution like $(0,v)$ with $0\ne v \in \cpx^n$.
So, $v$ is a solution to the homogeneous polynomial system
\be \label{Jac:fhgJ:x}
\rank\, Jac(f^{hom}, h^{hom},g_J^{hom})|_x\leq m_1 + |J|,   \quad
 h^{hom}(x) =g_J^{hom}(x)=0.
\ee
Since $\Delta(h^{hom}, g_J^{hom}) \ne 0$,
$\rank\, Jac(h^{hom},g_J^{hom})|_x = m_1 + |J|$
for all $0 \ne x \in V_{\cpx}(h^{hom},g_J^{hom})$.
The rank condition in \reff{Jac:fhgJ:x} implies that $f^{hom}(v)=0$
(cf.~\cite[Section~3]{Nie-dis}). Hence, $v$ is a nonzero singular solution to
\[
f^{hom}(x) = h^{hom}(x) =g_J^{hom}(x)=0,
\]
which contradicts $\Delta(f^{hom}, h^{hom}, g_J^{hom}) \ne 0$
(cf.~\cite{Nie-dis}). So, $\mc{V}_J$ must be finite.
\end{proof}

%
%

\end{document}